\newtheorem{thm}{Theorem}[section]
\newtheorem{lem}{Lemma}[section]
\newtheorem{cor}{Corollary}[section]
\newtheorem{rem}{\rm REMARK}[section]
\newtheorem{prop}{Proposition}[section]
\title{{\huge Construction and sample path properties of Brownian house-moving between two curves}}
\author{Kensuke Ishitani
\thanks{Department of Mathematical Sciences, Tokyo Metropolitan University, Hachioji, Tokyo 192-0397, Japan} 
\thanks{E-mail: k-ishitani@tmu.ac.jp
\qquad https://orcid.org/0000-0001-8400-0654}
\and 
Daisuke Hatakenaka 
\thanks{JAPAN PROCESS DEVELOPMENT CO., LTD., Shinagawa, Tokyo 141-0032, Japan}
\and 
Keisuke Suzuki
\thanks{NEC Corporation, Kawasaki, Kanagawa 211-8666, Japan}}
\date{}
\begin{document}
\maketitle

\begin{abstract}
This study aims to construct a stochastic process called 
``Brownian house-moving,'' which is a Brownian bridge 
conditioned to stay between two curves. 
To construct this process, statements are prepared 
on the weak convergence of conditioned Brownian motions, 
conditioned Brownian bridges, and 
conditioned three-dimensional Bessel bridges.
Moreover, the sample path properties of Brownian house-moving are studied as well.   

\bigskip
\noindent {\bf Keywords:}
Barrier options, Greeks, Brownian meander, BES($3$)-bridge, Brownian house-moving
\footnote[0]{2020 Mathematics Subject Classification: 
Primary 60F17; Secondary 60J25.}
\end{abstract}

\section{Introduction}

Recently, \cite{bib_Ishitani} developed 
a chain rule for Wiener path integrals between two curves 
that arise in the computation of first-order Greeks for barrier options, 
and demonstrated the effectiveness of this chain rule through numerical examples. 
In this chain rule, Brownian meander and BES($3$)-bridge conditioned to stay between two curves 
played an important role. 
Furthermore, we are currently investigating higher-order chain rules 
for computing higher-order Greeks for barrier options, 
and we expect a stochastic process called ``Brownian house-moving''  
to play an important role in their computation. 
A Brownian house-moving is defined as a Brownian bridge 
conditioned to stay between two curves. 
The purpose of this study is to construct these stochastic processes.

The remainder of this paper is organized as follows. 
In Section~\ref{sec_notation}, we present the notation used in this study. 
Section~\ref{section_Mainresults} states the main results of this study. 
In Subsection~\ref{subsection_Moving_between_2curves}, 
we construct the Brownian house-moving (Theorem~\ref{Thm_Def_and_Decomp_curved_Moving}).
In addition, the sample path properties of 
Brownian house-moving (Corollaries~\ref{Cor_30_curve_max_0t}, \ref{Cor_30_curve_min_t1} and Theorem~\ref{Thm_abs_conti}) 
are provided in this subsection. 
In Subsection~\ref{subsection_Meander_between_2curves}, 
we construct the Brownian meander between two curves. 
In Subsection~\ref{subsection_BES3bridge_between_2curves}, 
we construct the BES($3$)-bridge between two curves. 
Sections~\ref{section_proof_main_Lemma}, 
\ref{section_proof_Preparation_for_Proofs}, 
\ref{section_proof_Moving_between_2curves}, 
\ref{section_proof_Meander_between_2curves}, and
\ref{section_proof_3dBesselbridge_between_2curves}
are devoted to proving the main results in Section~\ref{section_Mainresults}.
Section~\ref{Sec_conclusion_future_work} presents an application of Brownian house-moving and concludes.

\section{Notation}\label{sec_notation}

For $0\leq s < t\leq 1$, let $C([s,t], \mathbb{R})$ 
be the class of $\mathbb{R}$-valued continuous functions 
defined on $[s,t]$, and let 
$$d_{\infty}(w,w') = \sup_{u\in [s,t]} \left| w(u) - w'(u) \right|
\quad (w, w' \in C([s,t], \mathbb{R})). $$ 
$\mathcal{B}(C([s,t], \mathbb{R}))$ denotes the Borel $\sigma$-algebra 
with respect to the topology generated by the metric $d_{\infty}$. 
In addition, for $0 \leq s < t \leq 1$, 
$\pi_{[s, t]} : C([0,1], \mathbb{R}) \to C([s,t], \mathbb{R})$ 
denotes the restriction map.

Assume that 
$Y : (\Omega, \mathcal{F}, P) \to 
(C([0,1], \mathbb{R}), \mathcal{B}(C([0,1], \mathbb{R})))$ 
is a random variable and 
$\Lambda \in \mathcal{B}(C([0,1], \mathbb{R}))$ 
satisfies $P(Y \in \Lambda) > 0$. 
Then, we define the probability measure 
$P_{Y^{-1}(\Lambda)}$ on 
$(Y^{-1}(\Lambda), Y^{-1}(\Lambda) \cap \mathcal{F})$ as
\begin{align*}
P_{Y^{-1}(\Lambda)}(A):= \frac{P(A)}{P(Y \in \Lambda)}, 
\qquad A \in Y^{-1}(\Lambda) \cap \mathcal{F}
:= \left\{ Y^{-1}(\Lambda) \cap F~|~F \in \mathcal{F} \right\}.
\end{align*}
Let $Y|_{\Lambda}$ denote the restriction $Y$ to 
$(Y^{-1}(\Lambda), Y^{-1}(\Lambda) \cap \mathcal{F}, P_{Y^{-1}(\Lambda)})$. 
Then, 
\begin{align}\label{Def_conditional_StochProc}
Y|_{\Lambda} : (Y^{-1}(\Lambda), Y^{-1}(\Lambda) \cap \mathcal{F}, P_{Y^{-1}(\Lambda)}) \to 
(\Lambda, \mathcal{B}(\Lambda))
\end{align}
is a random variable.
Throughout this study, 
$P_{Y^{-1}(\Lambda)}(Y|_{\Lambda} \in \Gamma)$ is often written as 
$P(Y|_{\Lambda} \in \Gamma)$, and
$E^{P_{Y^{-1}(\Lambda)}}[f(Y|_{\Lambda})]$ is often written as $E[f(Y|_{\Lambda})]$.

For $s>0$, we define
\begin{align*}
n_s(x) := \frac{1}{\sqrt{2 \pi s}} \exp \left( -\frac{x^2}{2s} \right) \qquad (x\in \mathbb{R}).
\end{align*}

$X_n \overset{\mathcal{D}}{\to} X$ denotes the convergence in distribution 
of the sequence of random variables $\{ X_n \}_{n=1}^{\infty}$ to the random variable $X$.  
In addition, we write $X \overset{\mathcal{D}}{=} Y$ 
for random variables $X, Y$ that follow the same distribution.

Let $0 \leq t_1 < t_2 \leq 1$. 
Throughout this study, we use the following notation. 

For $f, g \in C([0, 1], \mathbb{R})$, we define
\begin{align*}
&K_{[t_1,t_2]}(f,g) := 
\{ w = \{ w(t) \}_{t \in [t_1,t_2]} \in C([t_1,t_2], \mathbb{R} )
~|~f(t) \leq w(t) \leq g(t),\ t_1 \leq t \leq t_2 \},\\
&K_{[t_1,t_2]}^+(f) := \bigcup_{n=1}^{\infty} K_{[t_1,t_2]}(f, n),
\qquad K_{[t_1,t_2]}^-(g) := \bigcup_{n=1}^{\infty} K_{[t_1,t_2]}(-n, g), 
\end{align*}
and
\begin{align*}
&K(f,g) := K_{[0,1]}(f,g), \qquad K^+(f) := K_{[0,1]}^+(f), 
\qquad K^-(g) := K_{[0,1]}^-(g), \\
&K_t(f,g) := K_{[0,t]}(f,g), \qquad K_t^+(f) := K_{[0,t]}^+(f),
\qquad K_t^-(g) := K_{[0,t]}^-(g).
\end{align*}

For an $\mathbb{R}$-valued continuous process $X=\{ X(t) \}_{t \in [0,1]}$, 
we write its maximal and minimal values as
\begin{align*}
&M_{[t_1,t_2]}(X) = \max_{t_1 \leq u \leq t_2}X(u), \qquad
M_t(X) = M_{[0,t]}(X) ,\qquad M(X) = M_{[0,1]}(X), \\
&m_{[t_1,t_2]}(X) = \min_{t_1 \leq u \leq t_2}X(u), \qquad
m_t(X) = m_{[0,t]}(X), \qquad m(X) = m_{[0,1]}(X).
\end{align*}
Moreover, the natural filtration 
$\sigma(X(s) \mid 0 \leq s \leq t)$ of $X$ 
is denoted by $\mathcal{F}_t^X$.

$W = \{ W(t) \}_{t \geq 0}$, 
$B^{a \to b} = \{ B^{a \to b}(t) \}_{t \in [0,1]}$ $(a, b \in \mathbb{R})$, 
$W^+ = \{ W^+(t) \}_{t \in [0,1]}$, 
and $r^{c \to d} = \{ r^{c \to d}(t) \}_{t \in [0,1]}$ $(c, d \geq 0)$ denote 
standard one-dimensional Brownian motion, 
one-dimensional Brownian bridge from $a$ to $b$ on the time interval $[0,1]$, 
Brownian meander on the time interval $[0,1]$, and 
BES($3$)-bridge from $c$ to $d$ on the time interval $[0,1]$ 
defined on some probability space, respectively.  
For $a, b \in \mathbb{R}$ and $c, d \geq 0$, 
$W_{[t_1,t_2]}$, $B_{[t_1,t_2]}^{a \to b}$, $W_{[t_1,t_2]}^+$ and $r_{[t_1,t_2]}^{c \to d}$ denote 
one-dimensional Brownian motion, 
one-dimensional Brownian bridge from $a$ to $b$, 
Brownian meander, and 
BES($3$)-bridge from $c$ to $d$ defined on $[t_1,t_2]$, respectively. 
Laws of $W_{[t_1,t_2]}$, $B_{[t_1,t_2]}^{a \to b}$, $W_{[t_1,t_2]}^+$, and $r_{[t_1,t_2]}^{c \to d}$ are given by
\begin{align*}
&\{ W_{[t_1,t_2]}(u) \}_{u \in [t_1, t_2]}
\overset{\mathcal{D}}{=} 
\{ W(u-t_1) \}_{u \in [t_1, t_2]}, \\
&\left\{ B_{[t_1,t_2]}^{a \to b}(u) \right\}_{u \in [t_1, t_2]}
\overset{\mathcal{D}}{=}
\left\{ \sqrt{t_2-t_1}B^{\frac{a}{\sqrt{t_2-t_1}} \to \frac{b}{\sqrt{t_2-t_1}}}\left(\frac{u - t_1}{t_2-t_1}\right) \right\}_{u \in [t_1, t_2]}, \\
&\left\{ W_{[t_1,t_2]}^+(u) \right\}_{u \in [t_1, t_2]}
\overset{\mathcal{D}}{=}
\left\{ \sqrt{t_2-t_1}W^+\left(\frac{u - t_1}{t_2-t_1}\right) \right\}_{u \in [t_1, t_2]}, \\
&\left\{ r_{[t_1,t_2]}^{c \to d}(u) \right\}_{u \in [t_1, t_2]}
\overset{\mathcal{D}}{=}
\left\{ \sqrt{t_2-t_1}r^{\frac{c}{\sqrt{t_2-t_1}} \to \frac{d}{\sqrt{t_2-t_1}}}\left(\frac{u - t_1}{t_2-t_1}\right) \right\}_{u \in [t_1, t_2]}.
\end{align*}

\section{Main results}\label{section_Mainresults}

Let $g^-$ and $g^+$ be 
$\mathbb{R}$-valued $C^2$-functions defined on $[0, 1]$ that satisfy
\begin{align*}
\min_{0 \leq t \leq 1}(g^+(t) - g^-(t)) > 0.
\end{align*} 

Let $0 \leq t_1 < t_2 \leq 1$. 
According to the values $g^-(t_1) \leq a \leq g^+(t_1)$ 
and $g^-(t_2) \leq b \leq g^+(t_2)$, 
the continuous process $X_{[t_1,t_2]}^{a, b,(g^-, g^+)}$ on $[t_1,t_2]$ 
is defined as follows (see also \eqref{Def_conditional_StochProc} and Lemma~\ref{Lem_for_H_gm_to_gp} below):
\begin{itemize}
\item in the case $a=g^-(t_1), b < g^+(t_2)$, the weak limit of 
$B_{[t_1,t_2]}^{a \to b} |_{K_{[t_1,t_2]}(g^--\varepsilon, g^+)}$ 
as $\varepsilon \downarrow 0$;
\item in the case $a>g^-(t_1), b = g^+(t_2)$, the weak limit of 
$B_{[t_1,t_2]}^{a \to b} |_{K_{[t_1,t_2]}(g^-, g^++\varepsilon)}$ 
as $\varepsilon \downarrow 0$;
\item in the case $g^-(t_1) < a < g^+(t_1), g^-(t_2) < b < g^+(t_2)$, 
the conditioned process $B_{[t_1,t_2]}^{a \to b} |_{K_{[t_1,t_2]}(g^-, g^+)}$.
\end{itemize}
In addition, according to the value $g^-(t_1) \leq a < g^+(t_1)$, the continuous process 
$X_{[t_1,t_2]}^{a, (g^-, g^+)}$ on $[t_1,t_2]$ is defined as follows 
(see also \eqref{Def_conditional_StochProc} and Lemma~\ref{Lem_WeakConv_Meander_btwn_2crvs} below):
\begin{itemize}
\item in the case $g^-(t_1) = a$, the weak limit of 
$\left( a+W_{[t_1,t_2]} \right)|_{K_{[t_1,t_2]}(g^--\varepsilon, g^+)}$ 
as $\varepsilon \downarrow 0$; 
\item in the case $g^-(t_1) < a$, the conditioned process 
$\left( a+W_{[t_1,t_2]} \right)|_{K_{[t_1,t_2]}(g^-, g^+)}$. 
\end{itemize}
For an $\mathbb{R}$-valued continuous process $X$ on $[t_1,t_2]$ and 
$\mathbb{R}$-valued $C^2$-function $g$ defined on $[t_1,t_2]$, we define
\begin{align*}
Z_{[t_1,t_2]}^g (X) := 
\exp \left\{ g'(t_2)X(t_2) - g'(t_1)X(t_1) 
-\int_{t_1}^{t_2} X(u)g^{\prime \prime}(u) du 
- \frac{1}{2} \int_{t_1}^{t_2} g'(u)^2 du \right\}.
\end{align*}
Therefore, if $X$ is $W_{[t_1,t_2]}$, then it follows from It\^{o}'s formula that
\begin{align*}
Z_{[t_1,t_2]}^g (W_{[t_1,t_2]}) = 
\exp \left\{ \int_{t_1}^{t_2} g'(u) dW_{[t_1,t_2]}(u) 
- \frac{1}{2} \int_{t_1}^{t_2} g'(u)^2 du \right\}.
\end{align*}
For ease of later computations, we define 
$\widetilde{Z}_{[t_1,t_2]}^g (X) := Z_{[t_1,t_2]}^g (X+g)$. 

For $f \in C([t_1,t_2], \mathbb{R})$, 
we define $\overset{\leftarrow}{f}\in C([t_1,t_2], \mathbb{R})$ as
\begin{align*}
\overset{\leftarrow}{f}(t):=f(t_1+t_2-t), \quad t_1 \leq t \leq t_2.
\end{align*}

The following two lemmas are prepared to state our main results 
(Theorems \ref{Thm_Def_and_Decomp_curved_Moving}, 
\ref{Thm_Def_and_Decomp_curved_Meander_two_curve} and 
\ref{Thm_Def_and_Decomp_curved_BESbridge_two_curve}). 
The processes $X_{[t_1,t_2]}^{a, b,(g^-, g^+)}$ and $X_{[t_1,t_2]}^{a, (g^-, g^+)}$ 
in Lemmas \ref{Lem_for_H_gm_to_gp} and \ref{Lem_WeakConv_Meander_btwn_2crvs} 
are not our main target of this study, but just auxiliary ones.
In Lemmas~\ref{Lem_for_H_gm_to_gp} and 
\ref{Lem_WeakConv_Meander_btwn_2crvs} below, 
we assume that $\{ \eta(\varepsilon) \}_{\varepsilon>0}$ satisfies
\begin{align*}
\eta (\varepsilon)\geq 0\quad (\varepsilon>0)
\quad \mbox{and} \quad \eta(\varepsilon) \downarrow 0\quad (\varepsilon \downarrow 0).
\end{align*}
Two curves $g^--\eta^-(\varepsilon)$, $g^++\eta^+(\varepsilon)$ 
in Theorem~\ref{Thm_Def_and_Decomp_curved_Moving} 
move simultaneously as $\varepsilon \downarrow 0$. 
Therefore, to prove Theorem~\ref{Thm_Def_and_Decomp_curved_Moving}, 
$\{ \eta(\varepsilon) \}_{\varepsilon>0}$ 
in Lemmas~\ref{Lem_for_H_gm_to_gp} and \ref{Lem_WeakConv_Meander_btwn_2crvs} 
play an important role.

\begin{lem}\label{Lem_for_H_gm_to_gp}
Let $0 \leq t_1 < t_2 \leq 1$. 
$X_{[t_1,t_2]}^{a, b,(g^-, g^+)}$ exists and its distribution is given as follows. 
For every $\mathbb{R}$-valued bounded continuous function $F$ on $C([t_1,t_2], \mathbb{R})$,
\begin{itemize}
\item[{\rm ({1})}] if $a=g^-(t_1)$, $g^-(t_2)\leq b < g^+(t_2)$, then
\begin{align}
&E\left[ F(X_{[t_1,t_2]}^{a, b,(g^-, g^+)}) \right] 
= \lim_{\varepsilon \downarrow 0}
E\left[ F\left( 
B_{[t_1,t_2]}^{a \to b} |_{K_{[t_1,t_2]}(g^--\varepsilon, g^++\eta(\varepsilon))}
\right)\right] \nonumber \\
&\quad = \frac{ E\left[ F\Big(r_{[t_1,t_2]}^{0 \to b-g^-(t_2)} \big|_{K_{[t_1,t_2]}^-(g^+-g^-)} + g^-\Big) 
\widetilde{Z}_{[t_1,t_2]}^{g^--a} \Big(r_{[t_1,t_2]}^{0 \to b-g^-(t_2)} \big|_{K_{[t_1,t_2]}^-(g^+-g^-)}\Big)^{-1} \right]}
{E\left[\widetilde{Z}_{[t_1,t_2]}^{g^--a} \Big(r_{[t_1,t_2]}^{0 \to b-g^-(t_2)} \big|_{K_{[t_1,t_2]}^-(g^+-g^-)} \Big)^{-1} \right]}, 
\label{G_bridge_on_t1t2_touch_at_t1}
\end{align}
\item[{\rm ({2})}] if $g^-(t_1)<a\leq g^+(t_1)$, $b = g^+(t_2)$, then
\begin{align}
& E\left[F(X_{[t_1,t_2]}^{a, b,(g^-, g^+)}) \right] 
 = \lim_{\varepsilon \downarrow 0}
E\left[ F\left( 
B_{[t_1,t_2]}^{a \to b} |_{K_{[t_1,t_2]}(g^--\eta(\varepsilon), g^++\varepsilon)}
\right)\right] \nonumber \\
&\quad  = 
\frac{ E\left[F\Big(g^+-\overset{\leftarrow}{r}_{[t_1,t_2]}^{0 \to g^+(t_1)-a} \big|_{K_{[t_1,t_2]}^-(g^+-g^-)}\Big) 
\widetilde{Z}_{[t_1,t_2]}^{b-\overset{\leftarrow}{g}^+} 
\Big(r_{[t_1,t_2]}^{0 \to g^+(t_1)-a} \big|_{K_{[t_1,t_2]}^-(\overset{\leftarrow}{g}^+-\overset{\leftarrow}{g}^-)}\Big)^{-1}
\right]}
{E\left[\widetilde{Z}_{[t_1,t_2]}^{b-\overset{\leftarrow}{g}^+} 
\Big(r_{[t_1,t_2]}^{0 \to g^+(t_1)-a} \big|_{K_{[t_1,t_2]}^-(\overset{\leftarrow}{g}^+-\overset{\leftarrow}{g}^-)}\Big)^{-1}
\right]}, 
\label{G_bridge_on_t1t2_touch_at_t2}
\end{align}
where $\overset{\leftarrow}{r}_{[t_1,t_2]}^{0 \to g^+(t_1)-a}$
denotes the continuous process 
$\big\{ r_{[t_1,t_2]}^{0 \to g^+(t_1)-a}(t_1+t_2-t)\big\}_{t\in [t_1, t_2]}$, 
and $\overset{\leftarrow}{g}^{\pm}$ denote continuous functions 
$g^{\pm}(t_1+t_2-\cdot) \in C([t_1,t_2], \mathbb{R})$, respectively.
\end{itemize}
\end{lem}

Here, note that Lemma~\ref{Lem_for_H_gm_to_gp} is necessary 
to state Theorem~\ref{Thm_Def_and_Decomp_curved_Moving}.

\begin{lem}\label{Lem_WeakConv_Meander_btwn_2crvs}
Let $0 \leq t_1 < t_2 \leq 1$ and $a=g^-(t_1)$. 
$X_{[t_1,t_2]}^{a, (g^-, g^+)}$ 
exists and its distribution is given as follows. 
For every $\mathbb{R}$-valued bounded continuous function $F$ on $C([t_1,t_2], \mathbb{R})$,
\begin{align}
& E\left[ F(X_{[t_1,t_2]}^{a, (g^-, g^+)}) \right] 
= \lim_{\varepsilon \downarrow 0}
E\left[ F\left( 
\left( a+W_{[t_1,t_2]} \right)|_{K_{[t_1,t_2]}(g^--\varepsilon, g^++\eta(\varepsilon))}
\right)\right] \nonumber \\
& \quad = \frac{E\left[ F\big(W_{[t_1,t_2]}^+ |_{K_{[t_1,t_2]}^-(g^+-g^-)} +g^-\big) 
\widetilde{Z}_{[t_1,t_2]}^{g^--a} \big(W_{[t_1,t_2]}^+|_{K_{[t_1,t_2]}^-(g^+-g^-)}\big)^{-1}\right]}
{E\left[ \widetilde{Z}_{[t_1,t_2]}^{g^--a} \big(W_{[t_1,t_2]}^+|_{K_{[t_1,t_2]}^-(g^+-g^-)}\big)^{-1}\right] }. 
\label{G_meander_on_t1t2}
\end{align}
\end{lem}

\begin{rem}\label{Rem_F_canbetakento_Indicator_MeanderBESbridge}
Let $A$ be a closed subset of $C([t_1, t_2], \mathbb{R})$, and let
\begin{align*}
&d_{\infty}(w, A):=\inf\{d_{\infty}(w,v) \mid v\in A\} \quad (w\in C([t_1, t_2], \mathbb{R})), \\
&\varphi(x):=1-\int_0^1 1_{(-\infty, x]}(u)du \quad (x\in \mathbb{R}), \\
&F_n(w):=\varphi(nd_{\infty}(w, A))\quad (w\in C([t_1, t_2], \mathbb{R})).
\end{align*}
Then, $F_n$ is a bounded continuous function on $C([t_1, t_2], \mathbb{R})$  
and satisfies 
\[F_n(w)\downarrow 1_A(w), \qquad n \to \infty \]
for $w\in C([t_1, t_2], \mathbb{R})$. 
Thus, the dominated convergence theorem 
implies that Lemmas~\ref{Lem_for_H_gm_to_gp} and \ref{Lem_WeakConv_Meander_btwn_2crvs} hold true for $F=1_A$. 
Let $B\in \mathcal{B}(C([t_1,t_2], \mathbb{R}))$. 
Then, it follows from Dynkin's $\pi$-$\lambda$ theorem that 
Lemmas~\ref{Lem_for_H_gm_to_gp} and \ref{Lem_WeakConv_Meander_btwn_2crvs} hold true for $F=1_B$. 
\end{rem}

Further, we present the notation used in 
Subsections~\ref{subsection_Moving_between_2curves}, 
\ref{subsection_Meander_between_2curves} and
\ref{subsection_BES3bridge_between_2curves}.

Let $t_0 \in (t_1, t_2)$. 
For $w_1 \in C([t_1, t_0], \mathbb{R})$ and $w_2 \in C([t_0, t_2], \mathbb{R})$ 
that satisfy $w_1(t_0) = w_2(t_0)$, 
we define $w_1 \oplus_{t_0} w_2 \in C([t_1,t_2], \mathbb{R})$ as
\begin{align*}
(w_1 \oplus_{t_0} w_2)(t) :=
\begin{cases}
 w_1(t), \qquad t_1 \leq t \leq t_0, \\
 w_2(t), \qquad t_0 \leq t \leq t_2.
\end{cases}
\end{align*}

For $0<t<1$, $0 \leq t_1 < t_2 \leq 1$ and 
$y \in (g^-(t), g^+(t))$, $y_i \in (g^-(t_i), g^+(t_i))$ $(i=1,2)$, we define
\begin{align*}
q^{(g^-,g^+), (\uparrow)}_{[0,t]}(y)
&=E\Big[\widetilde{Z}_{[0, t]}^{g^--g^-(0)} \big(r_{[0, t]}^{0 \to y-g^-(t)} |_{K_{[0, t]}^-(g^+-g^-)}\big)^{-1}\Big] \\
&\quad \times
P\Big(r_{[0, t]}^{0 \to y-g^-(t)} \in K_{[0, t]}^-(g^+-g^-)\Big)
\frac{P(W_{[0, t]}^+(t) \in dy-g^-(t))}{dy}, \\
q^{(g^-,g^+), (\downarrow)}_{[t,1]}(y)
&=E\Big[\widetilde{Z}_{[t, 1]}^{g^+(1)-\overset{\leftarrow}{g}^+} 
\Big(r_{[t, 1]}^{0 \to g^+(t)-y} |_{K_{[t, 1]}^-(\overset{\leftarrow}{g}^+-\overset{\leftarrow}{g}^-)}\Big)^{-1}\Big] \\
&\quad \times
P\Big(r_{[t, 1]}^{0 \to g^+(t)-y} \in K_{[t, 1]}^-(\overset{\leftarrow}{g}^+-\overset{\leftarrow}{g}^-)\Big)
\frac{P(W_{[t, 1]}^+(1) \in g^+(t)-dy)}{dy}, \\
p^{(g^-,g^+)}_{[t_1,t_2]}(y_1)&=P(y_1 + W_{[t_1,t_2]} \in K_{[t_1,t_2]}(g^-, g^+)), \\
p^{(g^-,g^+)}_{[t_1,t_2]}(y_1, y_2)&=P(y_1 + W_{[t_1,t_2]} \in K_{[t_1,t_2]}(g^-, g^+), y_1 + W_{[t_1,t_2]}(t_2) \in dy_2)/dy_2.
\end{align*}
Here, we have
\begin{align*}
\frac{P(W_{[0, t]}^+(t) \in dy-g^-(t))}{dy}&=\sqrt{2\pi}\cdot \frac{y-g^-(t)}{\sqrt{t}}n_t(y-g^-(t)), \\
\frac{P(W_{[t, 1]}^+(1) \in g^+(t)-dy)}{dy}&=\sqrt{2\pi}\cdot \frac{g^+(t)-y}{\sqrt{1-t}}n_{1-t}(g^+(t)-y).
\end{align*}

\subsection{Construction and sample path properties 
of Brownian house-moving}
\label{subsection_Moving_between_2curves}

In this subsection, we define $b:=g^+(1)$ and assume that $g^-(0)=0$. 

Assume that $\{ \eta^-(\varepsilon) \}_{\varepsilon>0}$ and $\{ \eta^+(\varepsilon) \}_{\varepsilon>0}$ satisfy
\begin{align*}
\eta^{\pm}(\varepsilon)>0\quad (\varepsilon>0)
\quad \mbox{and} \quad \eta^{\pm}(\varepsilon) \downarrow 0\quad (\varepsilon \downarrow 0).
\end{align*}
For $0<t<1$, $0<t_1 < t_2<1$ and 
$y \in (g^-(t), g^+(t))$, $y_i \in (g^-(t_i), g^+(t_i))$ $(i=1,2)$, we define
\begin{align*}
&h(t,y)=(C_{g^-,g^+})^{-1}\frac{1}{\sqrt{t}}q^{(g^-,g^+), (\uparrow)}_{[0,t]}(y)\frac{1}{\sqrt{1-t}}q^{(g^-,g^+), (\downarrow)}_{[t,1]}(y),\\
&h(t_1, y_1, t_2, y_2) 
=\dfrac{p^{(g^-,g^+)}_{[t_1,t_2]}(y_1, y_2) \frac{1}{\sqrt{1-t_2}}q^{(g^-,g^+), (\downarrow)}_{[t_2,1]}(y_2)}
{\frac{1}{\sqrt{1-t_1}}q^{(g^-,g^+), (\downarrow)}_{[t_1,1]}(y_1) } ,
\end{align*}
where
\begin{align}\label{Def_Cgmgp}
C_{g^-,g^+}:=\frac{\pi n_1(b) }{2}
\lim_{\varepsilon \downarrow 0} 
\frac{P(B_{[0, 1]}^{0 \to b} \in K_{[0, 1]}(g^--\eta^-(\varepsilon), g^++\eta^+(\varepsilon)))}
{\eta^-(\varepsilon)\eta^+(\varepsilon)}.
\end{align}
Here, note that $h(t. \cdot)$ becomes a density function on the interval $(g^-(t), g^+(t))$. 

Our aim in this subsection is to prove the existence of the weak limit of 
$B_{[0,1]}^{0 \to b} |_{K_{[0, 1]}(g^--\eta^-(\varepsilon), g^++\eta^+(\varepsilon))}$ 
as $\varepsilon \downarrow 0$. 
$H^{g^-\to g^+}$ denotes this weak limit. 
Sample paths of $H^{0\to b}$ in \cite{bib_Ishitani_moving} 
appear as if they are randomly moving from old house $0$ to new house $b$. 
Therefore, in this study, we call $H^{g^-\to g^+}$ ``Brownian house-moving.''

\begin{thm}\label{Thm_Def_and_Decomp_curved_Moving}
There exists an $\mathbb{R}$-valued continuous Markov process 
$H^{g^-\to g^+}=\{H^{g^-\to g^+}(t)\}_{t\in [0,1]}$ that satisfies
\begin{align}
& E\left[ F(H^{g^-\to g^+}) \right]
=\lim_{\varepsilon \downarrow 0} 
E[F(B_{[0,1]}^{0 \to b} |_{K_{[0, 1]}(g^--\eta^-(\varepsilon), g^++\eta^+(\varepsilon))})]
\label{curvedMoving_def_eq1}\\
&\quad = \int_{g^-(t)}^{g^+(t)} 
E\left[ F(X_{[0,t]}^{0,y,(g^-, g^+)} \oplus_{t} X_{[t,1]}^{y,b,(g^-, g^+)}) \right] h(t,y) dy \label{6.3} \\
&\quad =\int_{g^-(t_1)}^{g^+(t_1)} \int_{g^-(t_2)}^{g^+(t_2)} 
E[F(X_{[0,t_1]}^{0,y_1,(g^-, g^+)} \oplus_{t_1} 
X_{[t_1,t_2]}^{y_1, y_2,(g^-, g^+)}\oplus_{t_2} 
X_{[t_2, 1]}^{y_2, b,(g^-, g^+)})] \label{6.4} \\
&\quad \qquad \qquad \qquad \times 
h(t_1, y_1)h(t_1, y_1, t_2, y_2) dy_1 dy_2 \nonumber
\end{align}
for every $\mathbb{R}$-valued bounded continuous function $F$ on 
$C([0, 1], \mathbb{R})$, $0<t<1$ and $0<t_1<t_2<1$,  
where the respective processes 
that appear in {\rm (\ref{6.3})} and {\rm (\ref{6.4})} are independent of each other. 
Moreover, for $0 < t < 1$, $0<t_1<t_2<1$ and $y\in (g^-(t), g^+(t))$, $y_i \in (g^-(t_i), g^+(t_i))$ $(i=1,2)$, 
the transition densities for $H^{g^-\to g^+}$ are given by
\begin{align*}
&P(H^{g^-\to g^+}(t) \in dy) = h(t, y)dy, \\
&P(H^{g^-\to g^+}(t_2) \in dy_2 \ \vert \ H^{g^-\to g^+}(t_1)=y_1) 
=h(t_1, y_1, t_2, y_2) dy_2.
\end{align*}
\end{thm}

For $0<t<1$, 
$y, y_1, y_2 \in \mathbb{R}$ and $\eta>0$, we define
\begin{align}
&J^{(\eta)}(t,y) := 
\sum_{k=-\infty}^{\infty} \frac{2(y+2k\eta)}{t} n_t(y+2k\eta) , 
\nonumber \\
&\overline{J}^{(\eta)}(t,y):=\frac{\partial}{\partial \eta}J^{(\eta)}(t,y)=
4\sum_{k=-\infty}^{\infty} k 
\left( \frac{1}{t}-\frac{(y+2k\eta)^2}{t^2}\right)n_t(y+2k\eta), 
\label{Def_of_functions_J}\\
&J^{(\eta)}(t, y_1, y_2) := 
\sum_{k=-\infty}^{\infty} (n_{t}(y_2-y_1+2k\eta) - n_{t}(y_2+y_1+2k\eta)).
\nonumber
\end{align}
Applying Theorem~\ref{Thm_Def_and_Decomp_curved_Moving} (\ref{6.3}) 
for $g^- \equiv 0$ and $g^+ \equiv b$, we obtain the next corollary.
\begin{cor}\label{Cor_Decomp_flat_Moving}
Let $b>0$. It holds for every $\mathbb{R}$-valued bounded continuous function $F$ on $C([0,1], \mathbb{R})$ that
\begin{align*}
& E\left[ F(H^{0 \to b}) \right]
=\lim_{\varepsilon \downarrow 0} 
E[F(B_{[0,1]}^{0 \to b} |_{K_{[0, 1]}(-\eta^-(\varepsilon), b+\eta^+(\varepsilon))})]\\
&\quad = \int_0^b 
E\left[ F\left( r_{[0,t]}^{0 \to y}|_{K_{[0,t]}^-(b)} 
\oplus_t \Big(b-\overset{\leftarrow}{r}_{[t,1]}^{0 \to b-y}|_{K_{[t,1]}^-(b)}\Big) \right) \right] 
P\left( H^{0 \to b}(t) \in dy \right), \quad 0<t<1, 
\end{align*}
where $r_{[0,t]}^{0 \to y}|_{K_{[0,t]}^-(b)}$ and 
$\overset{\leftarrow}{r}_{[t,1]}^{0 \to b-y}|_{K_{[t,1]}^-(b)}$
are chosen to be independent. 
Moreover, for $0 < s < t < 1$ and $x, y \in (0,b)$, 
the transition densities for $H^{0 \to b}$ are given by
\begin{align*}
&P\left( H^{0 \to b}(t) \in dy \right) 
=\dfrac{J^{(b)}(t,y)\ J^{(b)}(1-t, b-y)}{\overline{J}^{(b)}(1,b)}dy, \\
&P\left( H^{0 \to b}(t) \in dy~|~H^{0 \to b}(s)=x \right) 
=\frac{J^{(b)}(t-s,x,y)\ J^{(b)}(1-t,b-y)}{J^{(b)}(1-s,b-x)}dy.
\end{align*}
\end{cor}

\begin{rem}\label{Rem_F_canbetakento_Indicator_Moving}
Let $B\in \mathcal{B}(C([0,1], \mathbb{R}))$ be a measurable subset of $C([0,1], \mathbb{R})$. 
Then, it follows 
from the same argument in Remark~\ref{Rem_F_canbetakento_Indicator_MeanderBESbridge} 
that Theorem~\ref{Thm_Def_and_Decomp_curved_Moving} 
and Corollary~\ref{Cor_Decomp_flat_Moving} hold true for $F=1_B$.
\end{rem}

\begin{cor}\label{Cor_30_curve_max_0t}
Let $g$ be an $\mathbb{R}$-valued $C^1$-function defined on $[0, 1]$ that satisfies 
\begin{align*}
g^-(t)<g(t)\leq g^+(t), \quad 0\leq t \leq 1.
\end{align*} 
Then, for $t \in (0,1)$ and $g^-(t)\leq z\leq g(t)$, we have
\begin{align}
&P\left( \min_{u\in [0,t]} \left\{ g(u)-H^{g^-\to g^+}(u) \right\} = 0 \right) = 0, 
\label{eq_curve_max_0t_bdry} \\
&P\left( \min_{u\in [0,t]}\left\{ g(u)-H^{g^-\to g^+}(u) \right\} \geq 0, H^{g^-\to g^+}(t) \leq z \right)
\nonumber \\
&\quad =\int_{g^-(t)}^z(C_{g^-,g^+})^{-1}\frac{1}{\sqrt{t}}q^{(g^-,g), (\uparrow)}_{[0,t]}(y)\frac{1}{\sqrt{1-t}}q^{(g^-,g^+), (\downarrow)}_{[t,1]}(y)dy.
\label{eq_curve_max_0t_jointdist}
\end{align}
\end{cor}

\begin{cor}\label{Cor_30_curve_min_t1}
Let $g$ be an $\mathbb{R}$-valued $C^1$-function defined on $[0, 1]$ that satisfies 
\begin{align*}
g^-(t)\leq g(t)< g^+(t), \quad 0\leq t \leq 1.
\end{align*} 
Then, for $t \in (0,1)$ and $g(t)\leq z \leq g^+(t)$, we have
\begin{align}
&P\left( \min_{u\in [t,1]} \left\{ H^{g^-\to g^+}(u)-g(u) \right\} = 0 \right) = 0, 
\label{eq_curve_min_t1_bdry}\\
&P\left( \min_{u\in [t,1]}\left\{ H^{g^-\to g^+}(u)-g(u) \right\} \geq 0, H^{g^-\to g^+}(t) \leq z \right)
\nonumber \\
&\quad =\int_{g(t)}^z(C_{g^-,g^+})^{-1}
\frac{1}{\sqrt{t}}q^{(g^-,g^+), (\uparrow)}_{[0,t]}(y)\frac{1}{\sqrt{1-t}}q^{(g,g^+), (\downarrow)}_{[t,1]}(y)dy.
\label{eq_curve_min_t1_jointdist}
\end{align}
\end{cor}

\begin{rem}
Let $t\in (0,1)$. Applying 
Corollary~\ref{Cor_30_curve_max_0t} 
(resp., Corollary~\ref{Cor_30_curve_min_t1}) 
for $g=g^+$ (resp., $g=g^-$), we have
\begin{align*}
&P\left( m_{[0,t]}(g^+-H^{g^-\to g^+}) = 0 \right) = 0, 
\quad P\left( m_{[t,1]}(H^{g^-\to g^+}-g^-) = 0 \right) = 0, \\
&P\left( m_{[0,t]}(g^+-H^{g^-\to g^+}) \geq 0 \right) 
= P\left( m_{[0,t]}(g^+-H^{g^-\to g^+}) \geq 0, H^{g^-\to g^+}(t) \leq g^+(t) \right) 
= \int_{g^-(t)}^{g^+(t)} h(t,y) dy = 1, \\
&P\left( m_{[t,1]}(H^{g^-\to g^+}-g^-) \geq 0 \right) 
= P\left( m_{[t,1]}(H^{g^-\to g^+}-g^-) \geq 0, H^{g^-\to g^+}(t) \leq g^+(t) \right) 
= \int_{g^-(t)}^{g^+(t)} h(t,y) dy = 1, \\
&P\left( m_{[0,t]}(g^+-H^{g^-\to g^+}) > 0 \right) 
= P\left( m_{[0,t]}(g^+-H^{g^-\to g^+}) \geq 0 \right) - P\left( m_{[0,t]}(g^+-H^{g^-\to g^+}) = 0 \right) =1, \\
&P\left( m_{[t,1]}(H^{g^-\to g^+}-g^-) > 0 \right)  
= P\left( m_{[t,1]}(H^{g^-\to g^+}-g^-) \geq 0 \right) - P\left( m_{[t,1]}(H^{g^-\to g^+}-g^-) = 0 \right) =1.
\end{align*}
Therefore, Brownian house-moving $H^{g^-\to g^+}$ satisfies
\begin{align*}
P\left( \bigcap_{n\geq 2} \left\{ \min_{0\leq u \leq 1-1/n}(g^+(u)-H^{g^-\to g^+}(u)) > 0, 
\ \min_{1/n\leq u\leq 1}(H^{g^-\to g^+}(u)-g^-(u)) > 0\right\} \right)=1.
\end{align*}
\end{rem}

Let $t\in (0,1)$. 
Applying Theorem~\ref{Thm_Def_and_Decomp_curved_Moving} \eqref{curvedMoving_def_eq1} 
and a change of measure formula between Brownian meander and BES$(3)$-process (\cite{bib_Imhof}), 
we obtain the Radon-Nikodym derivative of $\pi_{[0, t]}\circ H^{g^-\to g^+}$ with respect to $R_{[0,t]} +g^-$.

\begin{thm}\label{Thm_abs_conti}
Let $t\in (0, 1)$, and let $R_{[0,t]} = \{ R_{[0,t]}(u)\}_{u\in [0,t]}$ 
be the BES($3$)-process starting at $0$ on $[0,t]$. 
Then, it holds that
\begin{align*}
&\frac{d\left( P\circ (\pi_{[0, t]}\circ H^{g^-\to g^+})^{-1} \right)}{d\left( P\circ (R_{[0,t]} +g^-)^{-1}\right)}(w) \\
&\quad = \sqrt{\frac{\pi}{2}}\cdot 
\dfrac{ q^{(g^-,g^+), (\downarrow)}_{[t,1]}\left(w(t)\right) }
{C_{g^-,g^+} \sqrt{1-t} \cdot (w(t)-g^-(t))\cdot Z_{[0,t]}^{g^-}(w)}
\cdot 1_{K_{[0,t]}^-(g^+)}\left( w\right), 
\quad w\in C([0, t],\mathbb{R}).
\end{align*}
\end{thm}

\begin{rem}\label{Remark_Approx_ExpectedVal_HhouseMoving}
In \cite{bib_Ishitani_moving}, 
using Corollary~\ref{Cor_Decomp_flat_Moving} and a Monte Carlo sampling technique for BES$(3)$-bridges, 
we numerically generated Brownian house-moving $H^{0 \to b}$ at discrete times. 
On the other hand, this sampling method does not work effectively for general Brownian house-moving $H^{g^- \to g^+}$. 
However, combining Theorem~\ref{Thm_abs_conti} 
and a Monte Carlo sampling technique for the BES$(3)$-process, 
we can approximate the expected values of the functional of $H^{g^- \to g^+}$.
\end{rem}

\subsection{Construction of Brownian meander between two curves}
\label{subsection_Meander_between_2curves}

In this subsection, we assume that $g^-(0)=0$. 

For $0<t<1$, $0<t_1 < t_2<1$ and 
$y \in (g^-(t), g^+(t))$, $y_i \in (g^-(t_i), g^+(t_i))$ $(i=1,2)$, we define
\begin{align*}
&k(t,y)=(\widetilde{C}_{g^-,g^+})^{-1}\frac{1}{\sqrt{t}}q^{(g^-,g^+), (\uparrow)}_{[0,t]}(y)p^{(g^-,g^+)}_{[t,1]}(y), \\
&k(t_1, y_1, t_2, y_2) 
=\dfrac{p^{(g^-,g^+)}_{[t_1,t_2]}(y_1, y_2) p^{(g^-,g^+)}_{[t_2,1]}(y_2)}{p^{(g^-,g^+)}_{[t_1,1]}(y_1) } ,
\end{align*}
where
\begin{align*}
\widetilde{C}_{g^-,g^+}:=\sqrt{\frac{\pi}{2}}
\lim_{\varepsilon \downarrow 0} 
\frac{P(W_{[0, 1]} \in K_{[0, 1]}(g^--\varepsilon, g^+))}
{\varepsilon}.
\end{align*}

$W^{+, (g^-,g^+)}$ denotes $X_{[0,1]}^{0,(g^-, g^+)}$, 
which is the weak limit of 
$W_{[0,1]} |_{K_{[0, 1]}(g^--\varepsilon, g^+)}$ 
as $\varepsilon \downarrow 0$. 
The well-known Brownian meander $W^+$ is a non-negative process and can be formally denoted by $W^{+, (0,\infty)}$ in our notations. 
Therefore, in this study, we call $W^{+, (g^-,g^+)}$ ``Brownian meander between two curves.'' 
$W^{+, (g^-,g^+)}$ played an important role in \cite{bib_Ishitani}. 
Our aim in this subsection is to prove that 
$W^{+, (g^-,g^+)}$ is an $\mathbb{R}$-valued continuous Markov process on $[0,1]$.

\begin{thm}\label{Thm_Def_and_Decomp_curved_Meander_two_curve}
There exists an $\mathbb{R}$-valued continuous Markov process 
$W^{+, (g^-,g^+)}=\{W^{+, (g^-,g^+)}(t)\}_{t\in [0,1]}$ that satisfies
\begin{align}
&E\left[ F(W^{+, (g^-,g^+)}) \right]
=\lim_{\varepsilon \downarrow 0} 
E[F(W_{[0,1]} |_{K_{[0, 1]}(g^--\varepsilon, g^+)})]
\label{curvedMeander_twocurve_def_eq1}\\
&\quad = \int_{g^-(t)}^{g^+(t)} 
E\left[ F(X_{[0,t]}^{0,y,(g^-, g^+)} \oplus_{t} X_{[t,1]}^{y,(g^-, g^+)}) \right] k(t,y) dy 
\label{twocurve_meander_decomp_1} \\
&\quad =\int_{g^-(t_1)}^{g^+(t_1)} \int_{g^-(t_2)}^{g^+(t_2)} 
E[F(X_{[0,t_1]}^{0,y_1,(g^-, g^+)} \oplus_{t_1} 
X_{[t_1,t_2]}^{y_1, y_2,(g^-, g^+)}\oplus_{t_2} 
X_{[t_2, 1]}^{y_2,(g^-, g^+)})] 
\label{twocurve_meander_decomp_2} \\
&\quad \qquad \qquad \qquad \times
k(t_1, y_1)k(t_1, y_1, t_2, y_2) dy_1 dy_2
\nonumber
\end{align}
for every $\mathbb{R}$-valued  bounded continuous function $F$ on 
$C([0, 1], \mathbb{R})$, $0<t<1$ and $0<t_1<t_2<1$,  
where the respective processes 
that appear in \eqref{twocurve_meander_decomp_1} and \eqref{twocurve_meander_decomp_2} are independent of each other. 
Moreover, for $0 < t < 1$, $0<t_1<t_2<1$ and $y\in (g^-(t), g^+(t))$, $y_i \in (g^-(t_i), g^+(t_i))$ $(i=1,2)$, 
the transition densities for $W^{+, (g^-,g^+)}$ are given by
\begin{align*}
&P(W^{+, (g^-,g^+)}(t) \in dy) = k(t, y)dy, \quad 
P(W^{+, (g^-,g^+)}(t_2) \in dy_2 \ \vert \ W^{+, (g^-,g^+)}(t_1)=y_1) =k(t_1, y_1, t_2, y_2) dy_2.
\end{align*}
\end{thm}

\begin{rem}\label{Rem_F_canbetakento_Indicator_Meander}
Let $B\in \mathcal{B}(C([0,1], \mathbb{R}))$ be a measurable subset of $C([0,1], \mathbb{R})$. 
Then, it follows from the same argument in Remark~\ref{Rem_F_canbetakento_Indicator_MeanderBESbridge} 
that Theorem~\ref{Thm_Def_and_Decomp_curved_Meander_two_curve} holds true for $F=1_B$.
\end{rem}

\subsection{Construction of BES($3$)-bridge between two curves}
\label{subsection_BES3bridge_between_2curves}

In this subsection, 
we assume that $g^-(0)=0$ and $g^-(1)<c<b:=g^+(1)$. 

For $0<t<1$, $0<t_1 < t_2<1$ and 
$y \in (g^-(t), g^+(t))$, $y_i \in (g^-(t_i), g^+(t_i))$ $(i=1,2)$, we define
\begin{align*}
& l(t,y)=(\widehat{C}_{g^-,g^+})^{-1}\frac{1}{\sqrt{t}}q^{(g^-,g^+), (\uparrow)}_{[0,t]}(y)p^{(g^-,g^+)}_{[t,1]}(y,c), \\
& l(t_1, y_1, t_2, y_2) 
=\dfrac{p^{(g^-,g^+)}_{[t_1,t_2]}(y_1, y_2) p^{(g^-,g^+)}_{[t_2,1]}(y_2, c)}{p^{(g^-,g^+)}_{[t_1,1]}(y_1, c) } ,
\end{align*}
where
\begin{align*}
\widehat{C}_{g^-,g^+}:=\sqrt{\frac{\pi}{2}} n_1(c)
\lim_{\varepsilon \downarrow 0} 
\frac{P(B_{[0, 1]}^{0\to c} \in K_{[0, 1]}(g^--\varepsilon, g^+))}
{\varepsilon}.
\end{align*}

$r^{0\to c, (g^-,g^+)}$ denotes $X_{[0,1]}^{0,c,(g^-, g^+)}$, 
which is the weak limit of 
$B_{[0,1]}^{0\to c} |_{K_{[0, 1]}(g^--\varepsilon, g^+)}$ 
as $\varepsilon \downarrow 0$. 
The well-known BES($3$)-bridge $r^{0\to d}$ ($d>0$) is a non-negative process and can be formally denoted by $r^{0\to d, (0,\infty)}$ in our notations.
Therefore, in this study, we call $r^{0\to c, (g^-,g^+)}$ ``BES($3$)-bridge between two curves.'' 
$r^{0\to c, (g^-,g^+)}$ played an important role in \cite{bib_Ishitani}. 
Our aim in this subsection is to prove that 
$r^{0\to c, (g^-,g^+)}$ is an $\mathbb{R}$-valued continuous Markov process on $[0,1]$.

\begin{thm}\label{Thm_Def_and_Decomp_curved_BESbridge_two_curve}
There exists an $\mathbb{R}$-valued continuous Markov process 
$r^{0\to c, (g^-,g^+)}=\{r^{0\to c, (g^-,g^+)}(t)\}_{t\in [0,1]}$ that satisfies
\begin{align}
&E\left[ F(r^{0\to c, (g^-,g^+)}) \right]
=\lim_{\varepsilon \downarrow 0} 
E[F(B_{[0,1]}^{0\to c} |_{K_{[0, 1]}(g^--\varepsilon, g^+)})]
\label{curvedBESbridge_twocurve_def_eq1}\\
&\quad = \int_{g^-(t)}^{g^+(t)} 
E\left[ F(X_{[0,t]}^{0,y,(g^-, g^+)} \oplus_{t} X_{[t,1]}^{y,c,(g^-, g^+)}) \right] l(t,y) dy 
\label{twocurve_BESbridge_decomp_1} \\
&\quad =\int_{g^-(t_1)}^{g^+(t_1)} \int_{g^-(t_2)}^{g^+(t_2)} 
E[F(X_{[0,t_1]}^{0,y_1,(g^-, g^+)} \oplus_{t_1} 
X_{[t_1,t_2]}^{y_1, y_2,(g^-, g^+)}\oplus_{t_2} 
X_{[t_2, 1]}^{y_2,c,(g^-, g^+)})] 
\label{twocurve_BESbridge_decomp_2} \\
&\quad \qquad \qquad \qquad \times 
l(t_1, y_1)l(t_1, y_1, t_2, y_2) dy_1 dy_2
\nonumber
\end{align}
for every $\mathbb{R}$-valued bounded continuous function $F$ on 
$C([0, 1], \mathbb{R})$, $0<t<1$ and $0<t_1<t_2<1$,  
where the respective processes 
that appear in \eqref{twocurve_BESbridge_decomp_1} and \eqref{twocurve_BESbridge_decomp_2} are independent of each other. 
Moreover, for $0 < t < 1$, $0<t_1<t_2<1$ and $y\in (g^-(t), g^+(t))$, $y_i \in (g^-(t_i), g^+(t_i))$ $(i=1,2)$, 
the transition densities for $r^{0\to c, (g^-,g^+)}$ are given by
\begin{align*}
&P(r^{0\to c, (g^-,g^+)}(t) \in dy) = l(t, y)dy, \\
&P(r^{0\to c, (g^-,g^+)}(t_2) \in dy_2 \ \vert \ r^{0\to c, (g^-,g^+)}(t_1)=y_1) =l(t_1, y_1, t_2, y_2) dy_2.
\end{align*}
\end{thm}

\begin{rem}\label{Rem_F_canbetakento_Indicator_BESbridge}
Let $B\in \mathcal{B}(C([0,1], \mathbb{R}))$ be a measurable subset of $C([0,1], \mathbb{R})$. 
Then, it follows 
from the same argument in Remark~\ref{Rem_F_canbetakento_Indicator_MeanderBESbridge} 
that Theorem~\ref{Thm_Def_and_Decomp_curved_BESbridge_two_curve} holds true for $F=1_B$.
\end{rem}

We also prove that $r^{0\to b, (g^-,g^++\eta)}$ 
converges weakly to $H^{g^-\to g^+}$ as $\eta \downarrow 0$.

\begin{thm}\label{Thm_WeakConv_BESbridge_to_Moving}
For every $\mathbb{R}$-valued bounded continuous function $F$ on $C([0, 1], \mathbb{R})$, we have
\begin{align*}
&E\left[ F\big(H^{g^-\to g^+}\big) \right]
=\lim_{\eta \downarrow 0} 
E[F(r^{0\to b, (g^-,g^++\eta)})].
\end{align*}
\end{thm}

\begin{rem}
Let $R=\{R(t)\}_{t\geq 0}$ be $3$-dimensional Bessel process (BES($3$) process for short) starting from $0$, 
and let $\tau_{b}$ $(b>0)$ denotes the first hitting time of the point $b$ by $R$:
\[
\tau_{b}:=\inf\{r\geq 0~|~ R(r)=b\}.
\]
It has been shown in \cite{bib_IshitaniRinYanashima} that
Brownian house-moving $H^{0\to b}=\{H^{0 \to b}(t) \}_{t \in [0,1]}$ satisfies
\begin{align*}
P\left(H^{0\to b}(t)\in dy\right)
&=P\left(R(t)\in dy~|~\tau_{b}=1\right),\\
P\left(H^{0\to b}(t)\in dy~|~H^{0\to b}(s)=x\right)
&=P\left(R(t)\in dy~|~R(s)=x, \tau_{b}=1\right)\notag
\end{align*}
for $0<s<t<1$ and $x, y\in (0, b)$.
\end{rem}

\section{Proofs of Lemma~\ref{Lem_for_H_gm_to_gp} and Lemma~\ref{Lem_WeakConv_Meander_btwn_2crvs}}
\label{section_proof_main_Lemma}

In this section, we prove Lemma~\ref{Lem_for_H_gm_to_gp} and Lemma~\ref{Lem_WeakConv_Meander_btwn_2crvs}.

\subsection{Proof of Lemma~\ref{Lem_for_H_gm_to_gp}}

To prove \eqref{G_bridge_on_t1t2_touch_at_t1}, it suffices to show that the limit
\begin{align}\label{objlim_bridge_1_between_two_curves}
\lim_{\varepsilon \downarrow 0} 
\frac{E[F(a+W_{[t_1,t_2]})\ ;\ a+W_{[t_1,t_2]}(t_2) \in d\widetilde{b}, 
a+W_{[t_1,t_2]} \in K_{[t_1,t_2]}(g^--\varepsilon, g^++\eta(\varepsilon))]}
{P(a+W_{[t_1,t_2]}(t_2) \in d\widetilde{b}, a+W_{[t_1,t_2]} \in K_{[t_1,t_2]}(g^--\varepsilon, g^++\eta(\varepsilon)))}
\bigg\vert_{\widetilde{b}=b}
\end{align}
exists and coincides with the right-hand side of \eqref{G_bridge_on_t1t2_touch_at_t1}. 
For each $F$ and $\varepsilon>0$, we obtain 
\begin{align*}
&E[F(a+W_{[t_1,t_2]})\ ;\ a+W_{[t_1,t_2]}(t_2) \in d\widetilde{b}, 
a+W_{[t_1,t_2]} \in K_{[t_1,t_2]}(g^--\varepsilon, g^++\eta(\varepsilon))] \\
&\quad = 
E[F(B_{[t_1,t_2]}^{0 \to \widetilde{b}-g^-(t_2)}+g^-) 
\widetilde{Z}_{[t_1,t_2]}^{g^--\varepsilon} (B_{[t_1,t_2]}^{0 \to \widetilde{b}-g^-(t_2)})^{-1}
\ ;\ B_{[t_1,t_2]}^{0 \to \widetilde{b}-g^-(t_2)} \in K_{[t_1,t_2]}(-\varepsilon, g^+-g^-+\eta(\varepsilon))] \\
&\qquad \times P(g^-(t_2)+W_{[t_1,t_2]}(t_2) \in d\widetilde{b})\\
&\quad = 
E\Big[F\big(B_{[t_1,t_2]}^{0 \to \widetilde{b}-g^-(t_2)} |_{ K_{[t_1,t_2]}(-\varepsilon, g^+-g^-+\eta(\varepsilon)) } + g^-\big) 
\widetilde{Z}_{[t_1,t_2]}^{g^--a} \big(B_{[t_1,t_2]}^{0 \to \widetilde{b}-g^-(t_2)} |_{ K_{[t_1,t_2]}(-\varepsilon, g^+-g^-+\eta(\varepsilon)) }\big)^{-1}\Big]\\
&\qquad \times P\big( B_{[t_1,t_2]}^{0 \to \widetilde{b}-g^-(t_2)} \in K_{[t_1,t_2]}(-\varepsilon, g^+-g^-+\eta(\varepsilon)) \big) 
P(g^-(t_2)+W_{[t_1,t_2]}(t_2) \in d\widetilde{b})
\end{align*}
by Lemma~\ref{Ap_Lem_Girsanov_all_type} \eqref{Girsanov_pinned_formula}. 
Therefore, 
taking the limit $\varepsilon \downarrow 0$ in \eqref{objlim_bridge_1_between_two_curves}, 
we obtain (1) by Proposition~\ref{Prop_Boundary_purturb_WeakConv_for_Meander_BES3bridge}. 

To prove \eqref{G_bridge_on_t1t2_touch_at_t2}, it suffices to show that the limit
\begin{align}\label{objlim_bridge_2_between_two_curves}
&\lim_{\varepsilon \downarrow 0} 
\frac{E[F(a+W_{[t_1,t_2]})\ ;\ a+W_{[t_1,t_2]}(t_2) \in d\widetilde{b}, 
a+W_{[t_1,t_2]} \in K_{[t_1,t_2]}(g^--\eta(\varepsilon), g^++\varepsilon)]}
{P(a+W_{[t_1,t_2]}(t_2) \in d\widetilde{b}, a+W_{[t_1,t_2]} \in K_{[t_1,t_2]}(g^--\eta(\varepsilon), g^++\varepsilon))}
\bigg\vert_{\widetilde{b}=b}
\end{align}
exists and coincides with the right-hand side of \eqref{G_bridge_on_t1t2_touch_at_t2}. 
Using Lemma~\ref{Ap_Lem_Girsanov_all_type} \eqref{Girsanov_pinned_inv_formula}, we obtain
\begin{align}
&E[F(a+W_{[t_1,t_2]})\ ;\ a+W_{[t_1,t_2]}(t_2) \in d\widetilde{b}, 
a+W_{[t_1,t_2]} \in K_{[t_1,t_2]}(g^--\eta(\varepsilon), g^++\varepsilon)] \nonumber \\
&\quad = 
E\Big[F\Big(\widetilde{b} -b+g^+- \overset{\leftarrow}{B}^{0\to \widetilde{b}-b+g^+(t_1)-a}_{[t_1,t_2]}\Big)
\widetilde{Z}_{[t_1,t_2]}^{b-\overset{\leftarrow}{g}^+} \big(B^{0\to \widetilde{b}-b+g^+(t_1)-a}_{[t_1,t_2]}\big)^{-1} 
\ ;\ \nonumber \\ 
&\qquad \qquad 
B^{0\to \widetilde{b}-b+g^+(t_1)-a}_{[t_1,t_2]}
\in K_{[t_1,t_2]}\big(\widetilde{b}-b-\varepsilon, \widetilde{b}-b+\overset{\leftarrow}{g}^+-\overset{\leftarrow}{g}^-+\eta(\varepsilon)\big)\Big]
\nonumber \\
&\qquad \times P\big(a+b-g^+(t_1)+W_{[t_1,t_2]}(t_2) \in d\widetilde{b}\big)
\nonumber \\
&\quad = E\Big[F\Big(\widetilde{b} -b+g^+-\overset{\leftarrow}B_{[t_1,t_2]}^{0\to \widetilde{b} -b+g^+(t_1)-a} 
|_{K_{[t_1,t_2]}(\widetilde{b} -b-\varepsilon, \widetilde{b} -b+g^+-g^-+\eta(\varepsilon))} \Big) \nonumber \\
&\qquad \qquad \times
\widetilde{Z}_{[t_1,t_2]}^{b-\overset{\leftarrow}{g}^+} \Big(B_{[t_1,t_2]}^{0\to \widetilde{b} -b+g^+(t_1)-a} 
|_{K_{[t_1,t_2]}(\widetilde{b} -b-\varepsilon, \widetilde{b} -b+\overset{\leftarrow}{g}^+-\overset{\leftarrow}{g}^-+\eta(\varepsilon))} \Big)^{-1}\Big] 
\nonumber \\
&\qquad \times 
P\Big( B_{[t_1,t_2]}^{0\to \widetilde{b} -b+g^+(t_1)-a} \in 
K_{[t_1,t_2]}(\widetilde{b} -b-\varepsilon, \widetilde{b} -b+\overset{\leftarrow}{g}^+-\overset{\leftarrow}{g}^-+\eta(\varepsilon)) \Big)
\nonumber \\
&\qquad \times P\big(a+b-g^+(t_1)+W_{[t_1,t_2]}(t_2) \in d\widetilde{b}\big),
\qquad (g^+(t_2)-\eta(\varepsilon)<\widetilde{b}<b+\varepsilon), 
\nonumber 
\end{align}
where $\overset{\leftarrow}{B}_{[t_1,t_2]}^{0 \to \widetilde{b} -b+g^+(t_1)-a}$
denotes the continuous process 
$\big\{ B_{[t_1,t_2]}^{0 \to \widetilde{b} -b+g^+(t_1)-a}(t_1+t_2-t)\big\}_{t\in [t_1, t_2]}$.
Therefore, taking the limit $\varepsilon \downarrow 0$ in \eqref{objlim_bridge_2_between_two_curves}, 
we can obtain \eqref{G_bridge_on_t1t2_touch_at_t2} by Proposition~\ref{Prop_Boundary_purturb_WeakConv_for_Meander_BES3bridge}. 

\subsection{Proof of Lemma~\ref{Lem_WeakConv_Meander_btwn_2crvs}}

It suffices to show that the limit
\begin{align}\label{objlim_meander_between_two_curves}
\lim_{\varepsilon \downarrow 0} 
\frac{E\left[ F \left( a+W_{[t_1,t_2]} \right) 
\ ;\ a+W_{[t_1,t_2]} \in K_{[t_1,t_2]}(g^--\varepsilon, g^++\eta(\varepsilon)) \right]}
{P\left(  a+W_{[t_1,t_2]}  \in K_{[t_1,t_2]}(g^--\varepsilon, g^++\eta(\varepsilon)) \right)}
\end{align}
exists and coincides with the right-hand side of the desired result. 
By Lemma~\ref{Ap_Lem_Girsanov_all_type} \eqref{Girsanov_free_formula}, we obtain
\begin{align*}
&E\left[ F(a+W_{[t_1,t_2]})
\ ;\ a+W_{[t_1,t_2]} \in K_{[t_1,t_2]}(g^--\varepsilon, g^++\eta(\varepsilon)) \right] \\
&\quad = E\left[ F(W_{[t_1,t_2]}+g^-) 
\widetilde{Z}_{[t_1,t_2]}^{g^--\varepsilon} (W_{[t_1,t_2]})^{-1}
\ ;\ W_{[t_1,t_2]} \in K_{[t_1,t_2]}(-\varepsilon, g^+-g^-+\eta(\varepsilon)) \right]\\
&\quad = E\left[ F(W_{[t_1,t_2]} |_{ K_{[t_1,t_2]}(-\varepsilon, g^+-g^-+\eta(\varepsilon)) } + g^-) 
\widetilde{Z}_{[t_1,t_2]}^{g^--a} (W_{[t_1,t_2]} |_{ K_{[t_1,t_2]}(-\varepsilon, g^+-g^-+\eta(\varepsilon)) } )^{-1} \right] \\
&\qquad \times 
P\left( W_{[t_1,t_2]} \in K_{[t_1,t_2]}(-\varepsilon, g^+-g^-+\eta(\varepsilon)) \right). 
\end{align*}
Therefore, taking the limit $\varepsilon \downarrow 0$ in \eqref{objlim_meander_between_two_curves}, 
we obtain \eqref{G_meander_on_t1t2} by Proposition~\ref{Prop_Boundary_purturb_WeakConv_for_Meander_BES3bridge}.

\section{Preparation for proofs of the main results}
\label{section_proof_Preparation_for_Proofs}

In this section, we prove some technical lemmas 
in preparation for proofs of the main results. 

\begin{lem}\label{Lem_MarkovDecomp_B_Bridge}
Assume that $h^-$ and $h^+$ are $\mathbb{R}$-valued $C^2$-functions defined on $[0, 1]$ 
satisfying 
\begin{align*}
h^-(0)<0<h^+(0)\quad \mbox{and}  \quad
\min_{0 \leq t \leq 1}(h^+(t) - h^-(t)) > 0.
\end{align*} 
Then, for every $\mathbb{R}$-valued bounded continuous function $F$ on $C([0,1], \mathbb{R})$,  
$0<s<t<1$ and $h^-(1)<b<h^+(1)$, we have 
\begin{align}
& E[F(W_{[0,1]})\ ;\ W_{[0,1]}(1) \in db, W_{[0,1]} \in K_{[0, 1]}(h^-, h^+)] \nonumber \\
&\quad = \int_{h^-(t)}^{h^+(t)}  
E[F(X_{[0, t]}^{0, y,(h^-, h^+)} \oplus_{t} X_{[t, 1]}^{y, b,(h^-, h^+)})] 
\label{MarkovDecomp_at_t_Bridge} \\
&\qquad \qquad \quad \times 
P(W_{[0, t]} \in K_{[0, t]}(h^-, h^+), W_{[0, t]}(t) \in dy) \nonumber \\
&\qquad \qquad \quad \times 
P(y + W_{[t, 1]} \in K_{[t, 1]}(h^-, h^+), y + W_{[t, 1]}(1) \in db)\nonumber \\
&\quad = \int_{h^-(s)}^{h^+(s)} dx \int_{h^-(t)}^{h^+(t)} dy
\ E[F(X_{[0, s]}^{0, x,(h^-, h^+)} \oplus_{s} 
X_{[s,t]}^{x, y,(h^-, h^+)} \oplus_{t} 
X_{[t, 1]}^{y, b,(h^-, h^+)})] \label{MarkovDecomp_at_st_Bridge} \\
&\qquad \qquad \qquad \times 
P(W_{[0, s]} \in K_{[0, s]}(h^-, h^+), W_{[0, s]}(s) \in dx)/dx \nonumber \\
&\qquad \qquad \qquad \times 
P(x + W_{[s,t]} \in K_{[s,t]}(h^-, h^+), x + W_{[s,t]}(t) \in dy)/dy \nonumber \\
&\qquad \qquad \qquad \times 
P(y + W_{[t, 1]} \in K_{[t, 1]}(h^-, h^+), y + W_{[t, 1]}(1) \in db), \nonumber 
\end{align}
where the respective processes 
that appear in \eqref{MarkovDecomp_at_t_Bridge} 
and \eqref{MarkovDecomp_at_st_Bridge} are independent of each other. 
\end{lem}
\begin{proof}
The Markov property of $W_{[0,1]}$ yields
\begin{align}
&E[F(W_{[0,1]})\ ;\ W_{[0,1]}(1) \in db, W_{[0,1]} \in K_{[0, 1]}(h^-, h^+)]
\nonumber \\
&\quad= \int_{h^-(t)}^{h^+(t)} 
E[F(W_{[0,1]})\ ;\ W_{[0,1]}(1) \in db, 
W_{[0,1]} \in K_{[0, 1]}(h^-, h^+), W_{[0,1]}(t) \in dy]
\nonumber \\
&\quad = \int_{h^-(t)}^{h^+(t)} 
E[F(W_{[0, t]} \oplus_{t} (y + W_{[t, 1]}))\ ; \ 
W_{[0, t]} \in K_{[0, t]}(h^-, h^+), W_{[0, t]}(t) \in dy, 
\label{MarkovDecomp_at_t_Proof_Bridge} \\
&\qquad \qquad \qquad \qquad \qquad \qquad \quad
y + W_{[t, 1]} \in K_{[t, 1]}(h^-, h^+), y + W_{[t, 1]}(1) \in db] \nonumber \\
&\quad = \int_{h^-(s)}^{h^+(s)} \int_{h^-(t)}^{h^+(t)} 
E[F(W_{[0, s]} \oplus_{s} (x + W_{[s,t]}) 
\oplus_{t} (y + W_{[t, 1]}))\ ; 
\label{MarkovDecomp_at_st_Proof_Bridge}\\
&\quad \qquad \qquad \qquad \qquad \qquad 
W_{[0, s]} \in K_{[0, s]}(h^-, h^+), 
W_{[0, s]}(s) \in dx, \nonumber \\
&\quad \qquad \qquad \qquad \qquad \qquad 
x + W_{[s,t]} \in K_{[s,t]}(h^-, h^+), 
x + W_{[s,t]}(t) \in dy, \nonumber \\
&\quad \qquad \qquad \qquad \qquad \qquad 
y + W_{[t, 1]} \in K_{[t, 1]}(h^-, h^+), 
y + W_{[t, 1]}(1) \in db] , \nonumber 
\end{align}
where the respective processes 
that appear in \eqref{MarkovDecomp_at_t_Proof_Bridge} 
and \eqref{MarkovDecomp_at_st_Proof_Bridge} are independent of each other. 
Using \eqref{MarkovDecomp_at_t_Proof_Bridge} and \eqref{MarkovDecomp_at_st_Proof_Bridge}, 
we obtain \eqref{MarkovDecomp_at_t_Bridge} and \eqref{MarkovDecomp_at_st_Bridge}, respectively. 
\end{proof}

In a similar manner to the above lemma, we can obtain the following.

\begin{lem}\label{Lem_MarkovDecomp_BM}
Under the same assumption as that of 
Lemma~\ref{Lem_MarkovDecomp_B_Bridge}, we have 
\begin{align}
& E[F(W_{[0,1]})\ ;\ W_{[0,1]} \in K_{[0, 1]}(h^-, h^+)] \nonumber \\
&\quad = \int_{h^-(t)}^{h^+(t)}  
E[F(X_{[0, t]}^{0, y,(h^-, h^+)} \oplus_{t} X_{[t, 1]}^{y,(h^-, h^+)})] 
\label{MarkovDecomp_at_t_BM} \\
&\qquad \quad \times 
P(W_{[0, t]} \in K_{[0, t]}(h^-, h^+), W_{[0, t]}(t) \in dy) 
P(y + W_{[t, 1]} \in K_{[t, 1]}(h^-, h^+))\nonumber \\
&\quad = \int_{h^-(s)}^{h^+(s)} dx \int_{h^-(t)}^{h^+(t)} dy  
\ E[F(X_{[0, s]}^{0, x,(h^-, h^+)} \oplus_{s} 
X_{[s,t]}^{x, y,(h^-, h^+)} \oplus_{t} 
X_{[t, 1]}^{y,(h^-, h^+)})] \label{MarkovDecomp_at_st_BM} \\
&\qquad \qquad \qquad \qquad \quad \times 
P(W_{[0, s]} \in K_{[0, s]}(h^-, h^+), W_{[0, s]}(s) \in dx) / dx\nonumber \\
&\qquad \qquad \qquad \qquad \quad \times 
P(x + W_{[s,t]} \in K_{[s,t]}(h^-, h^+), x + W_{[s,t]}(t) \in dy) / dy \nonumber \\
&\qquad \qquad \qquad \qquad \quad \times 
P(y + W_{[t, 1]} \in K_{[t, 1]}(h^-, h^+)), \nonumber 
\end{align}
for every $\mathbb{R}$-valued bounded continuous function $F$ on $C([0,1], \mathbb{R})$ 
and $0<s<t<1$, 
where the respective processes 
that appear in \eqref{MarkovDecomp_at_t_BM} 
and \eqref{MarkovDecomp_at_st_BM} are independent of each other. 
\end{lem}

\begin{lem}\label{Lem_Gtrans_0t_dy}
Assume that $h^-$ and $h^+$ are $\mathbb{R}$-valued $C^2$-functions defined on $[0, 1]$ 
satisfying $h^-(0)=0$ and 
\begin{align}\label{LemGtrans_hpm_noncol_cond}
\min_{0 \leq t \leq 1}(h^+(t) - h^-(t)) > 0.
\end{align} 
Then, for $0<t<1$, $\varepsilon>0$ and $y \in (h^-(t)-\varepsilon , h^+(t))$, we have
\begin{align*}
&\frac{P(W_{[0, t]}(t) \in dy, 
W_{[0, t]} \in K_{[0, t]}(h^--\varepsilon, h^+))}
{P(W_{[0, t]} \in K_{[0, t]}^+(-\varepsilon ))} \\
&\quad = E\left[\widetilde{Z}_{[0, t]}^{h^-} 
(B_{[0, t]}^{0 \to y-h^-(t)}|_{K_{[0, t]}(-\varepsilon, h^+-h^- )})^{-1} \right]
P\left(B_{[0, t]}^{0 \to y-h^-(t)}|_{K_{[0, t]}^+(-\varepsilon )} 
\in K_{[0, t]}^-(h^+-h^-) \right)\\
&\qquad \times 
P\left( W_{[0, t]} |_{K_{[0, t]}^+(-\varepsilon )}(t) \in dy-h^-(t) \right).
\end{align*}
\end{lem}
\begin{proof}
Using Lemma~\ref{Ap_Lem_Girsanov_all_type} \eqref{Girsanov_pinned_formula}, we obtain
\begin{align}
&P\big( W_{[0,t]}(t) \in dy, 
W_{[0,t]} \in K_{[0,t]}(h^--\varepsilon, h^+)\big) \nonumber \\
&\quad = 
E\Big[
\widetilde{Z}_{[0,t]}^{h^--\varepsilon} \big(B^{0\to y-h^-(t)}_{[0,t]}\big)^{-1} \ ;\ 
B^{0\to y-h^-(t)}_{[0,t]} \in K_{[0,t]}\big(-\varepsilon, h^+-h^- \big)\Big]\nonumber \\
&\qquad \times P\big(h^-(t)+W_{[0,t]}(t) \in dy\big)\nonumber \\
&\quad = 
E\left[\widetilde{Z}_{[0, t]}^{h^-} 
(B_{[0, t]}^{0 \to y-h^-(t)}|_{K_{[0, t]}(-\varepsilon, h^+-h^- )})^{-1} \right]
P\Big(B^{0\to y-h^-(t)}_{[0,t]} \in K_{[0,t]}\big(-\varepsilon, h^+-h^- \big)\Big)
\nonumber \\
&\qquad \times P\big(h^-(t)+W_{[0,t]}(t) \in dy\big)\nonumber \\
&\quad = 
E\left[\widetilde{Z}_{[0, t]}^{h^-} 
(B_{[0, t]}^{0 \to y-h^-(t)}|_{K_{[0, t]}(-\varepsilon, h^+-h^- )})^{-1} \right]
P\Big(B_{[0, t]}^{0 \to y-h^-(t)}|_{K_{[0, t]}^+(-\varepsilon )} 
\in K_{[0, t]}^-(h^+-h^-)\Big)
\nonumber \\
&\qquad \times 
P\Big(B_{[0, t]}^{0 \to y-h^-(t)}\in K_{[0, t]}^+(-\varepsilon )\Big)
P\big(h^-(t)+W_{[0,t]}(t) \in dy\big). \label{Lem_proof_Gtrans_0t_dy_step1} 
\end{align}
On the other hand, we have
\begin{align}
&P\big(B_{[0, t]}^{0 \to y-h^-(t)}\in K_{[0, t]}^+(-\varepsilon )\big)
=P\big(-h^-(t)+B_{[0, t]}^{h^-(t) \to y}\in K_{[0, t]}^+(-\varepsilon )\big)\nonumber \\
&\quad =\frac{P\big(W_{[0, t]}(t)\in K_{[0, t]}^+(-\varepsilon ), \ h^-(t)+W_{[0, t]}(t)\in dy\big)}
{P\big(h^-(t)+W_{[0, t]}(t)\in dy\big)}.
\label{Lem_proof_Gtrans_0t_dy_step2} 
\end{align} 
Combining \eqref{Lem_proof_Gtrans_0t_dy_step1} and \eqref{Lem_proof_Gtrans_0t_dy_step2}, 
we obtain our assertion. 
\end{proof}

\begin{lem}\label{Lem_Gtrans_t1_dy}
Assume that $h^-$ and $h^+$ are $\mathbb{R}$-valued $C^2$-functions defined on $[0, 1]$ 
satisfying \eqref{LemGtrans_hpm_noncol_cond}. 
Then, for $0<t<1$, $\varepsilon>0$ and $y \in (h^-(t), h^+(t)+\varepsilon )$, we have
\begin{align}
&\frac{P(y + W_{[t, 1]}(1) \in db, 
y + W_{[t, 1]} \in K_{[t, 1]}(h^-, h^++\varepsilon))}
{P(W_{[t, 1]} \in K_{[t, 1]}^+(-\varepsilon)) db} \bigg\vert_{b=h^+(1)} 
\nonumber \\
&\quad = 
E\bigg[
\widetilde{Z}_{[t, 1]}^{h^+(1)-\overset{\leftarrow}{h}^+}\Big(B_{[t, 1]}^{0 \to h^+(t)-y}|
_{K_{[t, 1]}(-\varepsilon , \overset{\leftarrow}{h}^+-\overset{\leftarrow}{h}^-)} \Big)^{-1}
\bigg]
P\Big( B_{[t, 1]}^{0 \to h^+(t)-y}|_{K_{[t, 1]}^+(-\varepsilon )} \in 
K_{[t, 1]}^-\big(\overset{\leftarrow}{h}^+-\overset{\leftarrow}{h}^-\big)\Big)
\nonumber \\
&\qquad \times 
P\big( W_{[t, 1]}|_{K_{[t, 1]}^+(-\varepsilon )}(1) \in h^+(t) -dy \big)/dy,
\label{convergence_to_ht1_y_Lem_Gtrans_t1_dy}
\end{align}
where we define $\overset{\leftarrow}{h}^{\pm}\in C([t,1], \mathbb{R})$ as 
$\overset{\leftarrow}{h}^{\pm}(\cdot)=h^{\pm}(t+1-\cdot)$, respectively. 
\end{lem}
\begin{proof}
Using Lemma~\ref{Ap_Lem_Girsanov_all_type} \eqref{Girsanov_pinned_inv_formula}, we obtain
\begin{align}
&P\big( y+W_{[t,1]}(1) \in db, 
y+W_{[t,1]} \in K_{[t,1]}(h^-, h^++\varepsilon)\big)/db \nonumber \\
&\quad = 
E\Big[
\widetilde{Z}_{[t,1]}^{h^+(1)-\overset{\leftarrow}{h}^+} \big(B^{0\to b-h^+(1)+h^+(t)-y}_{[t,1]}\big)^{-1} 
\ ;\ \nonumber \\ 
&\qquad \qquad 
B^{0\to b-h^+(1)+h^+(t)-y}_{[t,1]}
\in K_{[t,1]}\big(b-h^+(1)-\varepsilon, b-h^+(1)+\overset{\leftarrow}{h}^+-\overset{\leftarrow}{h}^-\big)\Big]
\nonumber \\
&\qquad \times P\big(y+h^+(1)-h^+(t)+W_{[t,1]}(1) \in db\big)/db.
\nonumber
\end{align}
Thus, it follows that
\begin{align}
&\frac{P\big( y+W_{[t,1]}(1) \in db, 
y+W_{[t,1]} \in K_{[t,1]}(h^-, h^++\varepsilon)\big)}{P(W_{[t, 1]} \in K_{[t, 1]}^+(-\varepsilon)) db} \Big\vert_{b=h^+(1)}\nonumber \\
&\quad = 
E\Big[
\widetilde{Z}_{[t,1]}^{h^+(1)-\overset{\leftarrow}{h}^+} 
\Big(B^{0\to h^+(t)-y}_{[t,1]}
\vert_{K_{[t,1]}\big(-\varepsilon, \overset{\leftarrow}{h}^+-\overset{\leftarrow}{h}^-\big)}
\Big)^{-1} 
\Big]
\frac{P\Big(B^{0\to h^+(t)-y}_{[t,1]}
\in K_{[t,1]}\big(-\varepsilon, \overset{\leftarrow}{h}^+-\overset{\leftarrow}{h}^-\big)\Big)}
{P(W_{[t, 1]} \in K_{[t, 1]}^+(-\varepsilon)) } \nonumber \\
&\qquad \times \frac{P\big(y+h^+(1)-h^+(t)+W_{[t,1]}(1) \in db\big)}{db}\Big\vert_{b=h^+(1)}
\nonumber \\
&\quad = 
E\Big[
\widetilde{Z}_{[t,1]}^{h^+(1)-\overset{\leftarrow}{h}^+} 
\Big(B^{0\to h^+(t)-y}_{[t,1]}
\vert_{K_{[t,1]}\big(-\varepsilon, \overset{\leftarrow}{h}^+-\overset{\leftarrow}{h}^-\big)}
\Big)^{-1} 
\Big]
P\Big( B_{[t, 1]}^{0 \to h^+(t)-y}|_{K_{[t, 1]}^+(-\varepsilon )} \in 
K_{[t, 1]}^-\big(\overset{\leftarrow}{h}^+-\overset{\leftarrow}{h}^-\big)\Big)
\nonumber \\
&\qquad \times 
\frac{P\Big(B^{0\to h^+(t)-y}_{[t,1]}
\in K_{[t,1]}^+\big(-\varepsilon \big)\Big)}
{P(W_{[t, 1]} \in K_{[t, 1]}^+(-\varepsilon)) }
\cdot 
\frac{P\big(y+h^+(1)-h^+(t)+W_{[t,1]}(1) \in db\big)}{db}\Big\vert_{b=h^+(1)}.
\label{Lem_proof_Gtrans_t1_dy_step2}
\end{align}
On the other hand, we have
\begin{align}
&P\Big(B^{0\to h^+(t)-y}_{[t,1]}
\in K_{[t,1]}^+\big(-\varepsilon \big)\Big)
=P\Big(h^+(t)-B^{h^+(t)\to y}_{[t,1]}
\in K_{[t,1]}^+\big(-\varepsilon \big)\Big)
\nonumber \\
&\quad =\frac{P\Big(h^+(t)-(h^+(t)+W_{[t,1]})
\in K_{[t,1]}^+\big(-\varepsilon \big), \ h^+(t)+W_{[t,1]}(1)\in dy\Big)}
{P(h^+(t)+W_{[t,1]}(1)\in dy)}
\nonumber \\
&\quad =\frac{P\Big(-W_{[t,1]}
\in K_{[t,1]}^+\big(-\varepsilon \big), \  h^+(t)-(-W_{[t,1]}(1))\in dy\Big)}
{P(h^+(t)+W_{[t,1]}(1)\in dy)}
\nonumber \\
&\quad =\frac{P\Big(W_{[t,1]}
\in K_{[t,1]}^+\big(-\varepsilon \big), \  h^+(t)-W_{[t,1]}(1)\in dy\Big)}
{P(h^+(t)-W_{[t,1]}(1)\in dy)}
\label{Lem_proof_Gtrans_t1_dy_step3}
\end{align} 
and
\begin{align}
\frac{P\big(y+h^+(1)-h^+(t)+W_{[t,1]}(1) \in db\big)}{db}\Big\vert_{b=h^+(1)}
=P(h^+(t)-W_{[t,1]}(1)\in dy)/dy.
\label{Lem_proof_Gtrans_t1_dy_step4}
\end{align}
Combining \eqref{Lem_proof_Gtrans_t1_dy_step2}, 
\eqref{Lem_proof_Gtrans_t1_dy_step3} and \eqref{Lem_proof_Gtrans_t1_dy_step4}, 
we obtain \eqref{convergence_to_ht1_y_Lem_Gtrans_t1_dy}.
\end{proof}

\begin{lem}\label{Lem_L1_convergence}
For $0\leq s<u<t\leq 1$, we have
\begin{align}\label{eq_Lem_L1_conv}
&\lim_{\varepsilon \downarrow 0} \int_{\mathbb{R}} \left| 
P\left( W_{[s, t]} |_{K_{[s, t]}^+(-\varepsilon )}(u) \in dy \right) \big/ dy 
- P\left( W_{[s, t]}^+(u) \in dy \right) \big/ dy \right| dy = 0.
\end{align}
\end{lem}
\begin{proof}
From the scaling property we can easily show that it suffices to prove \eqref{eq_Lem_L1_conv} in the case when $s=0$ and $t=1$. 
Let $0<u<1$. Using Lemma~\ref{Ap_BM_Density_Formulas_type1}, we have
\begin{align}
&P\left( W_{[0,1]}(u)\in dy, \ m(W_{[0,1]})\geq -\varepsilon \right)
\nonumber \\
&\quad = P(W_{[0,1]}(u)\in dy, \ m_u(W_{[0,1]})\geq -\varepsilon) \cdot P\left( m_{1-u}(W_{[0,1]})\geq -y-\varepsilon \right)
\nonumber \\
&\quad = 2\left( n_u(y) - n_u(y+2\varepsilon) \right) \int_0^{y+\varepsilon} n_{1-u}(x)dx dy, 
\quad (y \geq -\varepsilon).
\label{prepare_for_densityaeconv_meander}
\end{align}
Therefore, using \eqref{prepare_for_densityaeconv_meander} and L'H\^{o}pital's rule, we can deduce for $y>0$ that
\begin{align}
&P\left( W_{[0, 1]} |_{K_{[0, 1]}^+(-\varepsilon )}(u) \in dy \right)/dy 
=\frac{n_u(y) - n_u(y+2\varepsilon) }{\int_0^{\varepsilon} n_1(x)dx}
\int_0^{y+\varepsilon} n_{1-u}(x)dx 
\nonumber \\
&\quad \to 2\sqrt{2\pi}\frac{y n_u(y)}{u} \int_0^y n_{1-u}(x)dx
=P\left( W^+(u) \in dy \right)/dy, \quad \varepsilon \downarrow 0.
\label{density_ae_conv_meander}
\end{align}
Applying \eqref{density_ae_conv_meander} and Scheff\'{e}'s lemma, 
we obtain \eqref{eq_Lem_L1_conv}. 
\end{proof}

\section{Proofs of the main results in Subsection~\ref{subsection_Moving_between_2curves}}
\label{section_proof_Moving_between_2curves}

In this section, we prove the main results in Subsection~\ref{subsection_Moving_between_2curves}.

\subsection{Proof of Theorem~\ref{Thm_Def_and_Decomp_curved_Moving}}

In this subsection, we assume that all $X_{[s, t]}^{x,y,(g^-, g^+)}$ are independent. 
For each $\mathbb{R}$-valued bounded continuous function $G$ on $C([0,1], \mathbb{R})$ and $\varepsilon>0$, we define
\begin{align*}
I(\varepsilon, G):=
E[G(W_{[0,1]})\ ;\ W_{[0,1]}(1) \in d\widetilde{b}, W_{[0,1]} \in K_{[0, 1]}(g^--\eta^-(\varepsilon), g^++\eta^+(\varepsilon))]/d\widetilde{b} \ \big\vert_{\widetilde{b}=b}.
\end{align*}
Then, we have
\begin{equation}\label{suzue1}
E[F(B_{[0,1]}^{0 \to b} |_{K_{[0, 1]}(g^--\eta^-(\varepsilon), g^++\eta^+(\varepsilon))})] 
=\frac{I(\varepsilon, F)}{I(\varepsilon, 1)}.
\end{equation}
Further, by Lemma~\ref{Lem_MarkovDecomp_B_Bridge}, we obtain
\begin{align}
I(\varepsilon, F) 
&= \int_{g^-(t)-\eta^-(\varepsilon)}^{g^+(t)+\eta^+(\varepsilon)}  
E\left[F\left(X_{[0, t]}^{0, y,(g^--\eta^-(\varepsilon), g^++\eta^+(\varepsilon))} \oplus_{t} 
X_{[t, 1]}^{y, b,(g^--\eta^-(\varepsilon), g^++\eta^+(\varepsilon))}\right)\right] 
\label{Moving_MarkovDecomp_single} \\
&\qquad \times 
P(W_{[0, t]} \in K_{[0, t]}(g^--\eta^-(\varepsilon), g^++\eta^+(\varepsilon)), W_{[0, t]}(t) \in dy) \nonumber \\
&\qquad \times 
P(y + W_{[t, 1]} \in K_{[t, 1]}(g^--\eta^-(\varepsilon), g^++\eta^+(\varepsilon)), y + W_{[t, 1]}(1) \in d\widetilde{b})/d\widetilde{b}\ \big\vert_{\widetilde{b}=b}.
\nonumber
\end{align}
It follows from \eqref{Moving_MarkovDecomp_single}, Lemmas~\ref{Lem_Gtrans_0t_dy}, \ref{Lem_Gtrans_t1_dy}, 
Proposition~\ref{Prop_Boundary_purturb_WeakConv_for_Meander_BES3bridge}, 
Lemma~\ref{Lem_L1_convergence}, 
Lemma~\ref{Lem_for_H_gm_to_gp} 
and Lemma~\ref{Ap_Lem_weakconv_product} that
\begin{align}
I(F):=&\lim_{\varepsilon \downarrow 0} 
\frac{I(\varepsilon, F)}{\eta^-(\varepsilon) \eta^+(\varepsilon)}
\nonumber \\
=&\lim_{\varepsilon \downarrow 0} 
\frac{I(\varepsilon, F)}{P(W_{[0, t]} \in K_{[0, t]}^+(-\eta^-(\varepsilon) ))P(W_{[t, 1]} \in K_{[t, 1]}^+(-\eta^+(\varepsilon)))} 
\nonumber \\
&\quad \times \lim_{\varepsilon \downarrow 0} 
\frac{P(W_{[0, t]} \in K_{[0, t]}^+(-\eta^-(\varepsilon) ))P(W_{[t, 1]} \in K_{[t, 1]}^+(-\eta^+(\varepsilon)))}
{\eta^-(\varepsilon) \eta^+(\varepsilon)}
\nonumber \\
=&\frac{2}{\pi} 
\int_{g^-(t)}^{g^+(t)}
E\left[F\left(X_{[0, t]}^{0, y,(g^-, g^+)} \oplus_{t} X_{[t, 1]}^{y, b,(g^-, g^+)}\right)\right] 
\frac{1}{\sqrt{t}}q^{(g^-,g^+), (\uparrow)}_{[0,t]}(y)\frac{1}{\sqrt{1-t}}q^{(g^-,g^+), (\downarrow)}_{[t,1]}(y)dy .
\label{IF_t_decomp}
\end{align}
Here, note that 
\begin{align}
&\lim_{\varepsilon \downarrow 0}
E\left[ \widetilde{Z}_{[0,t]}^{g^-}
\left(B_{[0,t]}^{0\to y-g^-(t)}\big\vert_{K_{[0,t]}(-\eta^-(\varepsilon), g^+-g^-+\eta^+(\varepsilon))}\right)^{-1}
\right]
=E\left[ \widetilde{Z}_{[0,t]}^{g^-}
\left(r_{[0,t]}^{0\to y-g^-(t)}\big\vert_{K_{[0,t]}^-(g^+-g^-)}\right)^{-1}
\right]
\label{conv_Exp_Z_inv}
\end{align}
was used to obtain \eqref{IF_t_decomp}. 
Because $\displaystyle \widetilde{Z}_{[0,t]}^{g^-}(\cdot)^{-1}$ in \eqref{conv_Exp_Z_inv} 
can be regarded as a bounded continuous function on $C([0, t], \mathbb{R})$, 
we can prove \eqref{conv_Exp_Z_inv}. 
According to \eqref{IF_t_decomp}, we get
\begin{align}
&\frac{2}{\pi} 
\int_{g^-(t)}^{g^+(t)}
\frac{1}{\sqrt{t}}q^{(g^-,g^+), (\uparrow)}_{[0,t]}(y)\frac{1}{\sqrt{1-t}}q^{(g^-,g^+), (\downarrow)}_{[t,1]}(y)dy 
\nonumber \\
&\quad =I(1)=\lim_{\varepsilon \downarrow 0} 
\frac{P(W(1) \in d\widetilde{b}, W \in K_{[0, 1]}(g^--\eta^-(\varepsilon), g^++\eta^+(\varepsilon)))}{\eta^-(\varepsilon) \eta^+(\varepsilon)d\widetilde{b}}
\ \Big\vert_{\widetilde{b}=b}
\nonumber \\
&\quad =n_1(b)\lim_{\varepsilon \downarrow 0} 
\frac{P\big(B_{[0, 1]}^{0 \to b} \in K_{[0, 1]}(g^--\eta^-(\varepsilon), g^++\eta^+(\varepsilon))\big)}{\eta^-(\varepsilon) \eta^+(\varepsilon)} 
=\frac{2}{\pi} C_{g^-,g^+} . 
\label{I1_t_decomp}
\end{align}
Combining \eqref{suzue1}, \eqref{IF_t_decomp} and \eqref{I1_t_decomp}, we obtain
\begin{align*}
C_{g^-,g^+}=\int_{g^-(t)}^{g^+(t)}
\frac{1}{\sqrt{t}}q^{(g^-,g^+), (\uparrow)}_{[0,t]}(y)\frac{1}{\sqrt{1-t}}q^{(g^-,g^+), (\downarrow)}_{[t,1]}(y)dy \in (0, \infty)
\end{align*}
and
\begin{align*}
\lim_{\varepsilon \downarrow 0} 
E[F(B_{[0, 1]}^{0 \to b} |_{K_{[0, 1]}(g^--\eta^-(\varepsilon), g^++\eta^+(\varepsilon))})]
= \frac{I(F)}{I(1)} 
= \int_{g^-(t)}^{g^+(t)} 
E\left[ F\left(X_{[0,t]}^{0,y,(g^-, g^+)} \oplus_{t} X_{[t,1]}^{y,b,(g^-, g^+)} \right) \right] 
h(t, y)dy .
\end{align*}
Therefore, we can define the probability measure $\widetilde{P}_H$
on $(C([0,1], \mathbb{R}), \mathcal{B}(C([0,1], \mathbb{R})))$ as
\begin{align*}
\widetilde{P}_H(A):= \int_{g^-(t)}^{g^+(t)} 
P\left( X_{[0,t]}^{0,y,(g^-, g^+)} \oplus_{t} X_{[t,1]}^{y,b,(g^-, g^+)} \in A \right) h(t,y) dy
\quad (A\in \mathcal{B}(C([0,1], \mathbb{R}))),
\end{align*}
and there exists an $\mathbb{R}$-valued 
continuous stochastic process $H^{g^-\to g^+}=\{H^{g^-\to g^+}(t)\}_{t\in [0,1]}$
that satisfies (\ref{curvedMoving_def_eq1}) and (\ref{6.3}). 
Thus, a limit argument on $F$ yields 
\begin{align*}
P(H^{g^-\to g^+}(t) \in dy) = h(t, y)dy\quad (y \in (g^-(t), g^+(t))).
\end{align*}

On the other hand, by Lemma~\ref{Lem_MarkovDecomp_B_Bridge}, we obtain
\begin{align}
&I(\varepsilon, F) 
= \int_{g^-(t_2)-\eta^-(\varepsilon)}^{g^+(t_2)+\eta^+(\varepsilon)} dy_2
\int_{g^-(t_1)-\eta^-(\varepsilon)}^{g^+(t_1)+\eta^+(\varepsilon)} dy_1 
\label{Moving_MarkovDecomp_double} \\
&\quad \times E\left[F\left(X_{[0, t_1]}^{0, y_1,(g^--\eta^-(\varepsilon), g^++\eta^+(\varepsilon))} \oplus_{t_1} 
X_{[t_1,t_2]}^{y_1, y_2,(g^--\eta^-(\varepsilon), g^++\eta^+(\varepsilon))} \oplus_{t_2} 
X_{[t_2, 1]}^{y_2, b,(g^--\eta^-(\varepsilon), g^++\eta^+(\varepsilon))}\right)\right] 
\nonumber \\
&\quad \times 
P(y_2 + W_{[t_2, 1]} \in K_{[t_2, 1]}(g^--\eta^-(\varepsilon), g^++\eta^+(\varepsilon)), 
y_2 + W_{[t_2, 1]}(1) \in d\widetilde{b})/d\widetilde{b} \ \big\vert_{\widetilde{b}=b} 
\nonumber \\
&\quad \times 
P(y_1 + W_{[t_1,t_2]} \in K_{[t_1,t_2]}(g^--\eta^-(\varepsilon), g^++\eta^+(\varepsilon)), 
y_1 + W_{[t_1,t_2]}(t_2) \in dy_2)/dy_2 \nonumber \\
&\quad \times 
P(W_{[0, t_1]} \in K_{[0, t_1]}(g^--\eta^-(\varepsilon), g^++\eta^+(\varepsilon)), W_{[0, t_1]}(t_1) \in dy_1)/dy_1.
\nonumber 
\end{align}
By \eqref{Moving_MarkovDecomp_double}, Lemmas~\ref{Lem_Gtrans_0t_dy}, \ref{Lem_Gtrans_t1_dy}, 
Proposition~\ref{Prop_Boundary_purturb_WeakConv_for_Meander_BES3bridge}, 
Lemma~\ref{Lem_L1_convergence}, 
Lemma~\ref{Lem_for_H_gm_to_gp} 
and Lemma~\ref{Ap_Lem_weakconv_product}, 
$I(F)$ satisfies
\begin{align}
I(F)=&\lim_{\varepsilon \downarrow 0} 
\frac{I(\varepsilon, F)}{P(W_{[0, t_1]} \in K_{[0, t_1]}^+(-\eta^-(\varepsilon)))P(W_{[t_2, 1]} \in K_{[t_2, 1]}^+(-\eta^+(\varepsilon)))} 
\nonumber \\
&\times \lim_{\varepsilon \downarrow 0} 
\frac{P(W_{[0, t_1]} \in K_{[0, t_1]}^+(-\eta^-(\varepsilon)))P(W_{[t_2, 1]} \in K_{[t_2, 1]}^+(-\eta^+(\varepsilon)))}
{\eta^-(\varepsilon)\eta^+(\varepsilon)} \nonumber \\
=&\frac{2}{\pi}
\int_{g^-(t_1)}^{g^+(t_1)} \int_{g^-(t_2)}^{g^+(t_2)} 
E\left[F\left(X_{[0,t_1]}^{0,y_1,(g^-, g^+)} \oplus_{t_1} 
X_{[t_1,t_2]}^{y_1, y_2,(g^-, g^+)} \oplus_{t_2} 
X_{[t_2, 1]}^{y_2, b,(g^-, g^+)}\right)\right] \nonumber \\
&\qquad \quad \times 
\frac{1}{\sqrt{t_1}}q^{(g^-,g^+), (\uparrow)}_{[0,t_1]}(y_1)p^{(g^-,g^+)}_{[t_1,t_2]}(y_1, y_2) \frac{1}{\sqrt{1-t_2}}q^{(g^-,g^+), (\downarrow)}_{[t_2,1]}(y_2) dy_1 dy_2 
\nonumber \\
=&\frac{2}{\pi}C_{g^-,g^+}
\int_{g^-(t_1)}^{g^+(t_1)} \int_{g^-(t_2)}^{g^+(t_2)} 
E\left[F\left(X_{[0,t_1]}^{0,y_1,(g^-, g^+)} \oplus_{t_1} 
X_{[t_1,t_2]}^{y_1, y_2,(g^-, g^+)} \oplus_{t_2} 
X_{[t_2, 1]}^{y_2, b,(g^-, g^+)}\right) \right] \label{IF_doubleIntg_h_t1t2}\\
&\qquad \qquad \qquad \qquad \quad \times 
h(t_1, y_1) h(t_1, y_1, t_2, y_2) dy_1 dy_2 .
\nonumber 
\end{align}
It follows from \eqref{suzue1}, \eqref{I1_t_decomp} and \eqref{IF_doubleIntg_h_t1t2} that
\begin{align*}
&E[F(H^{g^-\to g^+})] 
=\lim_{\varepsilon \downarrow 0} 
E[F(B_{[0,1]}^{0 \to b} |_{K_{[0, 1]}(g^--\eta^-(\varepsilon), g^++\eta^+(\varepsilon))})]\\
&\quad =\int_{g^-(t_1)}^{g^+(t_1)} \int_{g^-(t_2)}^{g^+(t_2)} 
E\left[F\left(X_{[0,t_1]}^{0,y_1,(g^-, g^+)} \oplus_{t_1} 
X_{[t_1,t_2]}^{y_1, y_2,(g^-, g^+)}\oplus_{t_2} 
X_{[t_2, 1]}^{y_2, b,(g^-, g^+)}\right)\right] \\
&\qquad \qquad \qquad \qquad \times h(t_1, y_1)h(t_1, y_1, t_2, y_2) dy_1 dy_2 .
\end{align*}
Hence, (\ref{6.4}) holds. 
Similarly, using a limit argument on $F$, we can deduce for 
$y_1 \in (g^-(t_1), g^+(t_1))$ and $y_2 \in (g^-(t_2), g^+(t_2))$ that
\begin{align}
P(H^{g^-\to g^+}(t_1) \in dy_1, H^{g^-\to g^+}(t_2) \in dy_2) &= h(t_1, y_1)h(t_1, y_1, t_2, y_2) dy_1 dy_2, 
\nonumber \\
P(H^{g^-\to g^+}(t_2) \in dy_2 \ \vert \ H^{g^-\to g^+}(t_1)=y_1) &=h(t_1, y_1, t_2, y_2) dy_2.
\label{Proof_H_transdensity}
\end{align}

If we define $I_{t_1}(\varepsilon , y_1)$ $(y_1 \in (g^-(t_1), g^+(t_1)))$ to be
\begin{align*}
&I_{t_1}(\varepsilon ,y_1):= 
P(y_1 + W_{[t_1, 1]} \in K_{[t_1, 1]}(g^--\eta^-(\varepsilon), g^++\eta^+(\varepsilon)), y_1 + W_{[t_1, 1]}(1) \in d\widetilde{b})/d\widetilde{b} \ \big\vert_{\widetilde{b}=b},
\end{align*}
then 
\begin{align}
I_{t_1}(y_1):=&\lim_{\varepsilon \downarrow 0} \frac{I_{t_1}(\varepsilon ,y_1)}{\eta^+(\varepsilon)}
\nonumber \\
=&\lim_{\varepsilon \downarrow 0} \frac{I_{t_1}(\varepsilon ,y_1)}{P(W_{[t_1, 1]} \in K_{[t_1, 1]}^+(-\eta^+(\varepsilon)))}
\lim_{\varepsilon \downarrow 0} \frac{P(W_{[t_1, 1]} \in K_{[t_1, 1]}^+(-\eta^+(\varepsilon)))}{\eta^+(\varepsilon)}
\nonumber \\
=&\sqrt{\frac{2}{\pi}}\frac{1}{\sqrt{1-t_1}} q^{(g^-,g^+), (\downarrow)}_{[t_1,1]}(y_1)
\qquad (y_1 \in (g^-(t_1), g^+(t_1)))
\label{It1y1_expression_1}
\end{align}
holds by Lemma~\ref{Lem_Gtrans_t1_dy} and 
Proposition~\ref{Prop_Boundary_purturb_WeakConv_for_Meander_BES3bridge}. 
On the other hand, because we have
\begin{align*}
I_{t_1}(\varepsilon , y_1) 
&= \int_{g^-(t_2)-\eta^-(\varepsilon)}^{g^+(t_2)+\eta^+(\varepsilon)}  
P(y_1 + W_{[t_1,t_2]} \in K_{[t_1,t_2]}(g^--\eta^-(\varepsilon), g^++\eta^+(\varepsilon)), 
y_1 + W_{[t_1,t_2]}(t_2) \in dy_2) \\
&\qquad  \times 
P(y_2 + W_{[t_2, 1]} \in K_{[t_2, 1]}(g^--\eta^-(\varepsilon), g^++\eta^+(\varepsilon)), 
y_2 + W_{[t_2, 1]}(1) \in d\widetilde{b})/d\widetilde{b} \ \big\vert_{\widetilde{b}=b}, 
\end{align*}
for $y_1 \in (g^-(t_1), g^+(t_1))$, 
it follows from Lemma~\ref{Lem_Gtrans_t1_dy}, 
Proposition~\ref{Prop_Boundary_purturb_WeakConv_for_Meander_BES3bridge} 
and Lemma~\ref{Lem_L1_convergence} that
\begin{align}
I_{t_1}(y_1)
&=\lim_{\varepsilon \downarrow 0} \frac{I_{t_1}(\varepsilon ,y_1)}{P(W_{[t_2, 1]} \in K_{[t_2, 1]}^+(-\eta^+(\varepsilon)))}
\lim_{\varepsilon \downarrow 0} \frac{P(W_{[t_2, 1]} \in K_{[t_2, 1]}^+(-\eta^+(\varepsilon)))}{\eta^+(\varepsilon)}
\nonumber \\
&= \sqrt{\frac{2}{\pi}} \int_{g^-(t_2)}^{g^+(t_2)} 
p^{(g^-,g^+)}_{[t_1,t_2]} (y_1, y_2) \frac{1}{\sqrt{1-t_2}} q^{(g^-,g^+), (\downarrow)}_{[t_2,1]}(y_2) dy_2 
\qquad (y_1 \in (g^-(t_1), g^+(t_1))).
\label{It1y1_expression_2}
\end{align} 
Combining \eqref{It1y1_expression_1} and \eqref{It1y1_expression_2}, we obtain
\begin{align*}
\frac{1}{\sqrt{1-t_1}} q^{(g^-,g^+), (\downarrow)}_{[t_1,1]}(y_1)
&=\int_{g^-(t_2)}^{g^+(t_2)} 
p^{(g^-,g^+)}_{[t_1,t_2]} (y_1, y_2) \frac{1}{\sqrt{1-t_2}} q^{(g^-,g^+), (\downarrow)}_{[t_2,1]}(y_2) dy_2 
\qquad (y_1 \in (g^-(t_1), g^+(t_1)))
\end{align*}
and
\begin{align}
\int_{g^-(t_2)}^{g^+(t_2)} h(t_1, y_1, t_2, y_2) dy_2 = 1
\qquad (y_1 \in (g^-(t_1), g^+(t_1))).
\label{ht1y1t2y2_density}
\end{align}
Assume that $t_3$ satisfies $0<t_1<t_2<t_3<1$. 
Because we have
\begin{align}\label{Chapman_id_p_y1_y3}
p^{(g^-,g^+)}_{[t_1,t_3]} (y_1, y_3)=
\int_{g^-(t_2)}^{g^+(t_2)} 
p^{(g^-,g^+)}_{[t_1,t_2]} (y_1, y_2)
p^{(g^-,g^+)}_{[t_2,t_3]} (y_2, y_3) dy_2
\end{align}
for $y_1 \in (g^-(t_1), g^+(t_1))$ and $y_3 \in (g^-(t_3), g^+(t_3))$, 
we can deduce for 
$y_1 \in (g^-(t_1), g^+(t_1))$ and $y_3 \in (g^-(t_3), g^+(t_3))$ that 
the following Chapman--Kolmogorov identity holds: 
\begin{align}\label{ht1y1t2y2_transdensity}
h(t_1, y_1, t_3, y_3) 
=\int_{g^-(t_2)}^{g^+(t_2)} h(t_1, y_1, t_2, y_2) h(t_2, y_2, t_3, y_3) dy_2 .
\end{align}
Therefore, \eqref{Proof_H_transdensity}, \eqref{ht1y1t2y2_density} and \eqref{ht1y1t2y2_transdensity} imply that 
$H^{g^-\to g^+}=\{H^{g^-\to g^+}(t)\}_{t\in [0,1]}$ is a Markov process.

\subsection{Proof of Corollary~\ref{Cor_Decomp_flat_Moving}}

Let $0<s<t<1$ and $x, y \in (0, b)$. 
Then, by Corollary~\ref{cor_scaled_BES3bridge_max_dist}, we obtain
\begin{align*}
q^{(0,b), (\uparrow)}_{[0,t]}(y)
= \frac{t J^{(b)}(t,y)}{2y n_t(y)} P\big(\sqrt{t}W^+(1) \in dy\big)/dy 
= \sqrt{ \frac{\pi t}{2} } J^{(b)}(t,y)
\end{align*}
and
\begin{align*}
q^{(0,b), (\downarrow)}_{[t,1]}(y)
&=\frac{(1-t) J^{(b)}(1-t, b-y)}{2(b-y) n_{1-t}(b-y)} P\big(\sqrt{1-t}W^+(1) \in b-dy\big)/dy 
= \sqrt{ \frac{\pi (1-t)}{2} }J^{(b)}(1-t, b-y). 
\end{align*}
Further, by Lemma~\ref{Ap_Lem_BM_Density_Diff_Formulae_2} and L'H\^{o}pital's rule, it holds that
\begin{align*}
C_{0,b}
&=\frac{\pi n_1(b) }{2}
\lim_{\varepsilon \downarrow 0} 
\frac{P(B_{[0, 1]}^{0 \to b} \in K_{[0, 1]}(-\varepsilon, b+\varepsilon))}{\varepsilon^2}\\
&=\frac{\pi }{2}
\lim_{\varepsilon \downarrow 0} 
\frac{P(W(1)\in d\widetilde{b}, \ -\varepsilon \leq m(W)<M(W) \leq b+\varepsilon)}{\varepsilon^2 d\widetilde{b}}\ \Big\vert_{\widetilde{b}=b}
=\frac{\pi }{2} \overline{J}^{(b)}(1, b).
\end{align*}
On the other hand, $p^{(0,b)}_{[s,t]}(x, y)$ is written as
\begin{align*}
&p^{(0,b)}_{[s,t]}(x, y)
=P(x + W_{[s,t]} \in K_{[s,t]}(0, b), x + W_{[s,t]}(t) \in dy)/dy \\
&\quad =P\left(W(t-s) \in dy-x, -x \leq m_{t-s}(W) < M_{t-s}(W) \leq b-x \right) /dy\\
&\quad = \sum_{k=-\infty}^{\infty} 
\left( n_{t-s}(y-x+2kb) 
- n_{t-s}(2(k+1)b-y-x) \right) =J^{(b)}(t-s, x, y)
\end{align*}
by \eqref{BM_Density_No2_Wt_mt_Mt}. 
Therefore, by Theorem~\ref{Thm_Def_and_Decomp_curved_Moving}, we obtain
\begin{align*}
P\left( H^{0 \to b}(t) \in dy \right) 
&=\dfrac{\frac{1}{\sqrt{t}}q^{(0,b), (\uparrow)}_{[0,t]}(y)\frac{1}{\sqrt{1-t}}q^{(0,b), (\downarrow)}_{[t,1]}(y)}{C_{0,b}}
=\dfrac{ J^{(b)}(t,y) J^{(b)}(1-t, b-y)}{\overline{J}^{(b)}(1, b)}, \\
P\left( H^{0 \to b}(t) \in dy~|~H^{0 \to b}(s)=x \right) 
&=\dfrac{p^{(0,b)}_{[s,t]}(x, y) \frac{1}{\sqrt{1-t}}q^{(0,b), (\downarrow)}_{[t,1]}(y)}{\frac{1}{\sqrt{1-s}}q^{(0,b), (\downarrow)}_{[s,1]}(x) } 
=\dfrac{J^{(b)}(t-s, x, y)J^{(b)}(1-t, b-y)}{J^{(b)}(1-s, b-x)}.
\end{align*}

\subsection{Proof of Corollary~\ref{Cor_30_curve_max_0t}}

Let $A_i$ $(i=1,2)$ be closed subsets of $C([0,1], \mathbb{R})$ given by
\begin{align*}
&A_1:=\left\{ w \in C([0,1], \mathbb{R})\ \big\vert \ \min_{u\in [0,t]}\left\{ g(u)-w(u) \right\}= 0 \right\}, \\
&A_2:=\left\{ w \in C([0,1], \mathbb{R})
\ \big\vert \ \min_{u\in [0,t]}\left\{ g(u)-w(u) \right\} \geq 0,\ w(t) \leq z \right\}.
\end{align*}
Remark~\ref{Rem_F_canbetakento_Indicator_Moving} implies that 
Theorem~\ref{Thm_Def_and_Decomp_curved_Moving} 
can be applied for $F=1_{A_i}$ $(i=1,2)$. Thus, we obtain
\begin{align}
&P\left( \min_{u\in [0,t]} \left\{ g(u)-H^{g^-\to g^+}(u) \right\} = 0 \right)
=\int_{g^-(t)}^{g(t)} P\left( X_{[0,t]}^{0,y,(g^-, g^+)}\in \partial K_{[0,t]}^-(g) \right) h(t,y)dy,
\label{Moving_Decomp_touch_g} \\
&P\left( \min_{u\in [0,t]}\left\{ g(u)-H^{g^-\to g^+}(u) \right\} \geq 0, H^{g^-\to g^+}(t) \leq z \right) 
=\int_{g^-(t)}^z
P\left( X_{[0,t]}^{0,y,(g^-, g^+)}\in K_{[0,t]}^-(g)
\right) h(t,y)dy .
\label{Moving_Decomp_leq_g}
\end{align}
It follows from Lemma~\ref{Lem_Boundary_Prob_for_Meander_BES3bridge} that
\begin{align}\label{eq_E_1_partialKg_eq_0}
P\Big(r_{[0,t]}^{0 \to y-g^-(t)} \in \partial K_{[0,t]}^-(g-g^-)\Big) =0 \qquad (g^-(t)<y<g(t)).
\end{align}
Combining Remark~\ref{Rem_F_canbetakento_Indicator_MeanderBESbridge} and \eqref{eq_E_1_partialKg_eq_0}, 
we have 
\begin{align}\label{Cor2_eq_prob_zero}
P\left( X_{[0,t]}^{0,y,(g^-, g^+)}\in \partial K_{[0,t]}^-(g) \right)=0
\qquad (g^-(t)<y<g(t)).
\end{align} 
Thus, by \eqref{Moving_Decomp_touch_g} and \eqref{Cor2_eq_prob_zero}, 
we obtain \eqref{eq_curve_max_0t_bdry}.
On the other hand, we have
\begin{align}
&P\left( X_{[0,t]}^{0,y,(g^-, g^+)}\in K_{[0,t]}^-(g) \right)
\nonumber \\
&\quad = \frac{ E\left[ \widetilde{Z}_{[0,t]}^{g^-} \Big(r_{[0,t]}^{0 \to y-g^-(t)} \big|_{K_{[0,t]}^-(g-g^-)} \Big)^{-1} \right]}
{E\left[\widetilde{Z}_{[0,t]}^{g^-} \Big(r_{[0,t]}^{0 \to y-g^-(t)} \big|_{K_{[0,t]}^-(g^+-g^-)}\Big)^{-1} \right] }
\cdot \frac{P\left( r_{[0,t]}^{0 \to y-g^-(t)} \in K_{[0,t]}^-(g-g^-)\right)}{P\left(r_{[0,t]}^{0 \to y-g^-(t)} \in K_{[0,t]}^-(g^+-g^-)\right)}
\qquad (g^-(t)<y<g(t))
\label{ProbChange_curve_max_0t_g}
\end{align}
by Remark~\ref{Rem_F_canbetakento_Indicator_MeanderBESbridge}. 
Combining \eqref{Moving_Decomp_leq_g} and \eqref{ProbChange_curve_max_0t_g}, 
we obtain \eqref{eq_curve_max_0t_jointdist}.

\subsection{Proof of Corollary~\ref{Cor_30_curve_min_t1}}

The proof of Corollary~\ref{Cor_30_curve_min_t1} is quite similar to the proof of Corollary~\ref{Cor_30_curve_max_0t}. 
Therefore, below we outline the proof of Corollary~\ref{Cor_30_curve_min_t1}. 
Let $b = g^+(1)$. Then, we obtain
\begin{align}
&P\left( \min_{u\in [t,1]} \left\{ H^{g^-\to g^+}(u)-g(u) \right\} = 0 \right)
=\int_{g(t)}^{g^+(t)} P\left( X_{[t,1]}^{y,b,(g^-, g^+)}\in \partial K_{[t,1]}^+(g) \right) h(t,y)dy,
\label{Moving_Decomp_geq_g_touch_g} \\
&P\left( \min_{u\in [t,1]}\left\{ H^{g^-\to g^+}(u) - g(u) \right\} \geq 0, H^{g^-\to g^+}(t) \leq z \right) 
=\int_{g(t)}^z
P\left( X_{[t,1]}^{y,b,(g^-, g^+)} \in K_{[t,1]}^+(g)
\right) h(t,y)dy .
\label{Moving_Decomp_geq_g}
\end{align}
On the other hand, we have 
\begin{align}
&P\left( X_{[t,1]}^{y,b,(g^-, g^+)}\in \partial K_{[t,1]}^+(g) \right)=0, \label{Cor3_eq_prob_zero}\\
&P\left( X_{[t,1]}^{y, b,(g^-, g^+)} \in K_{[t,1]}^+(g) \right) \nonumber \\
&\quad = 
\frac{ E\left[ 
\widetilde{Z}_{[t,1]}^{b-\overset{\leftarrow}{g}^+} 
\Big(r_{[t,1]}^{0 \to g^+(t)-y} \big|_{K_{[t,1]}^-(\overset{\leftarrow}{g}^+-\overset{\leftarrow}{g})}\Big)^{-1}
\right]}
{E\left[\widetilde{Z}_{[t,1]}^{b-\overset{\leftarrow}{g}^+} 
\Big(r_{[t,1]}^{0 \to g^+(t)-y} \big|_{K_{[t,1]}^-(\overset{\leftarrow}{g}^+-\overset{\leftarrow}{g}^-)}\Big)^{-1}
\right]}
\cdot
\frac{ P\left( r_{[t,1]}^{0 \to g^+(t)-y} \in K_{[t,1]}^-(\overset{\leftarrow}{g}^+-\overset{\leftarrow}{g})\right)}
{P\left( r_{[t,1]}^{0 \to g^+(t)-y} \in K_{[t,1]}^-(\overset{\leftarrow}{g}^+-\overset{\leftarrow}{g}^-)\right)}
\label{Change_curve_gmin_to_g}
\end{align}
for $y \in (g(t), g^+(t))$. 
Combining \eqref{Moving_Decomp_geq_g_touch_g}, \eqref{Moving_Decomp_geq_g}, 
\eqref{Cor3_eq_prob_zero} and \eqref{Change_curve_gmin_to_g}, we obtain 
\eqref{eq_curve_min_t1_bdry} and \eqref{eq_curve_min_t1_jointdist}.

\subsection{Proof of Theorem~\ref{Thm_abs_conti}}

Let $F$ be an $\mathbb{R}$-valued bounded continuous function on $C([0, t], \mathbb{R})$. 
By the Markov property of $B_{[0,1]}^{0 \to b}$ and Lemma~\ref{Lem_abs_conti_B_motion_and_B_bridge}, we obtain
\begin{align}
&E[F(\pi_{[0,t]} ( B_{[0,1]}^{0 \to b} )) 1_{K_{[0, 1]}(g^--\eta^-(\varepsilon), g^++\eta^+(\varepsilon))}(B_{[0,1]}^{0 \to b} )]
\nonumber \\
&=\int_{C([0,1],\mathbb{R})}F(\pi_{[0,t]} (w)) 
1_{K_{[0, t]}(g^--\eta^-(\varepsilon), g^++\eta^+(\varepsilon))}(\pi_{[0,t]} (w))
\nonumber \\
&\qquad \qquad \times P\big( B_{[t,1]}^{w(t) \to b}\in K_{[t, 1]}(g^--\eta^-(\varepsilon), g^++\eta^+(\varepsilon))\big)
P(B_{[0,1]}^{0 \to b} \in dw)\nonumber \\
&=\int_{C([0,t],\mathbb{R})}F(w) 
1_{K_{[0, t]}(g^--\eta^-(\varepsilon), g^++\eta^+(\varepsilon))}(w)\nonumber \\
&\qquad \qquad \times P\big( B_{[t,1]}^{w(t) \to b}\in K_{[t, 1]}(g^--\eta^-(\varepsilon), g^++\eta^+(\varepsilon))\big)
P(\pi_{[0,t]}(B^{0 \to b})\in dw) \nonumber \\ 
&=\int_{C([0,t],\mathbb{R})}F(w) 
P\big( B_{[t,1]}^{w(t) \to b}\in K_{[t, 1]}(g^--\eta^-(\varepsilon), g^++\eta^+(\varepsilon))\big)
\label{bunshi_Markov_B0tob} \\
&\qquad \qquad \times \frac{n_{1-t}(w(t)-b)}{n_1(b)}
1_{K_{[0,t]}(g^--\eta^-(\varepsilon), g^++\eta^+(\varepsilon))}(w) P(\pi_{[0,t]}(W)\in dw).
\nonumber
\end{align}
Then, by \eqref{bunshi_Markov_B0tob}, it holds that
\begin{align}
&E[F(\pi_{[0,t]} ( B_{[0,1]}^{0 \to b} |_{K_{[0, 1]}(g^--\eta^-(\varepsilon), g^++\eta^+(\varepsilon))}))]\nonumber \\
&\quad =\frac{E[F(\pi_{[0,t]} ( B_{[0,1]}^{0 \to b} ))\ ;\ B_{[0,1]}^{0 \to b} \in K_{[0, 1]}(g^--\eta^-(\varepsilon), g^++\eta^+(\varepsilon))]}
{P(B_{[0,1]}^{0 \to b} \in K_{[0, 1]}(g^--\eta^-(\varepsilon), g^++\eta^+(\varepsilon)))} \nonumber \\
&\quad =\frac{\pi}{2}\cdot
\frac{ 2\eta^-(\varepsilon) \eta^+(\varepsilon)}
{\pi n_1(b) P\big(B_{[0,1]}^{0 \to b} \in K_{[0, 1]}(g^--\eta^-(\varepsilon), g^++\eta^+(\varepsilon)) \big)}\nonumber \\
&\qquad \times 
\frac{P\big( W_{[0,t]} \in K_{[0,t]}(g^--\eta^-(\varepsilon), g^++\eta^+(\varepsilon)) \big)}{\eta^-(\varepsilon)} \nonumber \\
&\qquad \times \int_{C([0,t],\mathbb{R})}F(w) 
\frac{P\big( B_{[t,1]}^{w(t) \to b}\in K_{[t, 1]}(g^--\eta^-(\varepsilon), g^++\eta^+(\varepsilon))\big)}{\eta^+(\varepsilon)}
\label{bunshi_bunbo_Markov_B0tob} \\
&\qquad \qquad \qquad \times 
n_{1-t}(w(t)-b) P\big(W_{[0,t]}\vert_{K_{[0,t]}(g^--\eta^-(\varepsilon), g^++\eta^+(\varepsilon))} \in dw \big). 
\nonumber
\end{align}
On the other hand, using Lemma~\ref{Lem_Gtrans_t1_dy} and 
Proposition~\ref{Prop_Boundary_purturb_WeakConv_for_Meander_BES3bridge}, 
we obtain
\begin{align}
&\frac{P\big( B_{[t,1]}^{a \to b}\in K_{[t, 1]}(g^--\eta^-(\varepsilon), g^++\eta^+(\varepsilon))\big)}{\eta^+(\varepsilon)} n_{1-t}(a-b)
\nonumber \\
&=\frac{P\big( a+W_{[t,1]}(1) \in d\widetilde{b},\ a+W_{[t,1]} \in K_{[t, 1]}(g^--\eta^-(\varepsilon), g^++\eta^+(\varepsilon)) \big)}{P(W_{[t,1]}\in K^+_{[t, 1]}(-\eta^+(\varepsilon)))d\widetilde{b}}
\ \Big\vert_{\widetilde{b}=b} \nonumber \\
&\quad \times
\frac{P(W_{[t,1]}\in K^+_{[t, 1]}(-\eta^+(\varepsilon)))}{\eta^+(\varepsilon)} \nonumber \\
&\to q^{(g^-,g^+), (\downarrow)}_{[t,1]}(a)\cdot \sqrt{\frac{2}{\pi}} \frac{1}{\sqrt{1-t}},
\qquad \varepsilon \downarrow 0, 
\label{Lem_for_radon_nikodym_conv1}
\end{align}
for $g^-(t)<a<g^+(t)$. 
In addition, by Lemma~\ref{Ap_Lem_Girsanov_all_type} \eqref{Girsanov_free_formula} and 
Proposition~\ref{Prop_Boundary_purturb_WeakConv_for_Meander_BES3bridge}, we have
\begin{align}
&\frac{P(W_{[0, t]} \in K_{[0, t]}(g^--\eta^-(\varepsilon), g^++\eta^+(\varepsilon)))}{\eta^-(\varepsilon)}
\nonumber \\
&\quad = 
\frac{E[\widetilde{Z}_{[0, t]}^{g^-} (W_{[0, t]})^{-1}
\ ;\ W_{[0, t]} \in K_{[0, t]}(-\eta^-(\varepsilon), g^+-g^-+\eta^+(\varepsilon))]}
{P(W_{[0, t]} \in K_{[0, t]}^+(-\eta^-(\varepsilon) ))}  
\cdot \frac{P(W_{[0, t]} \in K_{[0, t]}^+(-\eta^-(\varepsilon) ) )}{\eta^-(\varepsilon)} \nonumber \\
&\quad = E\left[\widetilde{Z}_{[0, t]}^{g^-} 
(W_{[0, t]}|_{K_{[0, t]}(-\eta^-(\varepsilon), g^+-g^-+\eta^+(\varepsilon) )})^{-1} \right]
P\left(W_{[0, t]}|_{K_{[0, t]}^+(-\eta^-(\varepsilon) )} 
\in K_{[0, t]}^-(g^+-g^-+\eta^+(\varepsilon)) \right) \nonumber \\
&\qquad \times \frac{P(W_{[0, t]} \in K_{[0, t]}^+(-\eta^-(\varepsilon) ) )}{\eta^-(\varepsilon)} 
\nonumber \\
&\quad \to 
E\left[\widetilde{Z}_{[0, t]}^{g^-} 
(W_{[0, t]}^+|_{K_{[0, t]}^-(g^+-g^-)})^{-1} \right]
P\left(W_{[0, t]}^+ \in K_{[0, t]}^-(g^+-g^-) \right)\sqrt{\frac{2}{\pi}} \cdot \frac{1}{\sqrt{t}}, 
\qquad \varepsilon \downarrow 0. 
\label{Lem_for_radon_nikodym_conv2}
\end{align}
Therefore, it follows from 
Theorem~\ref{Thm_Def_and_Decomp_curved_Moving}, 
\eqref{bunshi_bunbo_Markov_B0tob}, 
\eqref{Def_Cgmgp}, 
\eqref{Lem_for_radon_nikodym_conv1}, 
\eqref{Lem_for_radon_nikodym_conv2} and 
Lemma~\ref{Lem_WeakConv_Meander_btwn_2crvs} that
\begin{align}
&E\left[ F(\pi_{[0,t]}(H^{g^-\to g^+})) \right]
=\lim_{\varepsilon \downarrow 0} 
E[F(\pi_{[0,t]}(B_{[0,1]}^{0 \to b} |_{K_{[0, 1]}(g^--\eta^-(\varepsilon), g^++\eta^+(\varepsilon))}))]
\nonumber \\
&\quad =E\left[ F\left(W_{[0,t]}^+ +g^-\right) 
\dfrac{ q^{(g^-,g^+), (\downarrow)}_{[t,1]}\left(W_{[0,t]}^+(t) +g^-(t)\right) }
{ C_{g^-,g^+}\sqrt{t(1-t)} \cdot \widetilde{Z}_{[0,t]}^{g^-} \big(W_{[0,t]}^+\big) }
1_{K_{[0,t]}^-(g^+-g^-)}\left( W_{[0,t]}^+\right) \right] .
\label{CVF_Meander} 
\end{align}
Further, combining \eqref{CVF_Meander} and 
a change of measure formula between Brownian meander and BES$(3)$-process (\cite{bib_Imhof}), we obtain
\begin{align}
&E\left[ F(\pi_{[0,t]}(H^{g^-\to g^+})) \right] \nonumber \\
&\quad = \sqrt{ \frac{\pi }{2} }
E\left[ F\left(R_{[0,t]} +g^-\right) 
\dfrac{ q^{(g^-,g^+), (\downarrow)}_{[t,1]}\left(R_{[0,t]}(t) +g^-(t)\right) }
{ C_{g^-,g^+}\sqrt{1-t} \cdot R_{[0,t]}(t)\cdot \widetilde{Z}_{[0,t]}^{g^-} \big(R_{[0,t]}\big) }
1_{K_{[0,t]}^-(g^+-g^-)}\left( R_{[0,t]} \right) \right] \nonumber \\
&\quad = \sqrt{ \frac{\pi }{2} }
\int_{C([0,t],\mathbb{R})}F(w)
\dfrac{ q^{(g^-,g^+), (\downarrow)}_{[t,1]}\left(w(t)\right) }
{ C_{g^-,g^+}\sqrt{1-t} \cdot (w(t)-g^-(t))\cdot Z_{[0,t]}^{g^-}(w)} \label{proof_abs_conti_Functional_rep} \\
&\qquad \qquad \qquad \qquad \times
1_{K_{[0,t]}^-(g^+)}\left( w\right) 
P\left( R_{[0,t]} +g^- \in dw \right). \nonumber
\end{align}

Using \eqref{proof_abs_conti_Functional_rep}, 
Lebesgue's dominated convergence theorem, and Dynkin's lemma, 
we can prove Theorem~\ref{Thm_abs_conti}. 
\qed

\section{Proofs of Theorem~\ref{Thm_Def_and_Decomp_curved_Meander_two_curve} }
\label{section_proof_Meander_between_2curves}

In this subsection, we assume that all $X_{[s, t]}^{x,y,(g^-, g^+)}$ and $X_{[s, t]}^{z,(g^-, g^+)}$ are independent. 
For each $\mathbb{R}$-valued bounded continuous function $G$ on $C([0,1], \mathbb{R})$ and $\varepsilon>0$, we define
\begin{align*}
I(\varepsilon, G):=E[G(W_{[0,1]})\ ;\ W_{[0,1]} \in K_{[0, 1]}(g^--\varepsilon, g^+)].
\end{align*}
Then, we have
\begin{equation}\label{ishitani_meander_2curves}
E[F(W_{[0,1]} |_{K_{[0, 1]}(g^--\varepsilon, g^+)})] 
= \frac{E[F(W_{[0,1]})\ ;\ W_{[0,1]} \in K_{[0, 1]}(g^--\varepsilon, g^+)]}
{P(W_{[0,1]} \in K_{[0, 1]}(g^--\varepsilon, g^+))}
=\frac{I(\varepsilon, F)}{I(\varepsilon, 1)}.
\end{equation}
Further, by Lemma~\ref{Lem_MarkovDecomp_BM}, we obtain
\begin{align}
I(\varepsilon, F) 
&= \int_{g^-(t)-\varepsilon}^{g^+(t)}  
E\left[F\left(X_{[0, t]}^{0, y,(g^--\varepsilon, g^+)} \oplus_{t} 
X_{[t, 1]}^{y, (g^--\varepsilon, g^+)}\right)\right] \label{Meander_MarkovDecomp_single} \\
&\qquad \qquad \times
P(W_{[0, t]} \in K_{[0, t]}(g^--\varepsilon, g^+), W_{[0, t]}(t) \in dy) \nonumber \\
&\qquad \qquad \times 
P(y + W_{[t, 1]} \in K_{[t, 1]}(g^--\varepsilon, g^+) ).
\nonumber 
\end{align}
It follows from \eqref{Meander_MarkovDecomp_single}, 
Lemma~\ref{Lem_Gtrans_0t_dy}, 
Proposition~\ref{Prop_Boundary_purturb_WeakConv_for_Meander_BES3bridge}, 
Lemma~\ref{Lem_L1_convergence},  
Lemma~\ref{Lem_for_H_gm_to_gp} 
and Lemma~\ref{Ap_Lem_weakconv_product} that
\begin{align}
I(F):=&\lim_{\varepsilon \downarrow 0} 
\frac{I(\varepsilon, F)}{\varepsilon}
=\lim_{\varepsilon \downarrow 0} 
\frac{I(\varepsilon, F)}{P(W_{[0, t]} \in K_{[0, t]}^+(-\varepsilon ))} 
\times \lim_{\varepsilon \downarrow 0} 
\frac{P(W_{[0, t]} \in K_{[0, t]}^+(-\varepsilon ))}{\varepsilon}
\nonumber \\
=& \sqrt{\frac{2}{\pi}} 
\int_{g^-(t)}^{g^+(t)}
E\left[F\left(X_{[0, t]}^{0, y,(g^-, g^+)} \oplus_{t} X_{[t, 1]}^{y, (g^-, g^+)}\right)\right] 
\frac{1}{\sqrt{t}}q^{(g^-,g^+), (\uparrow)}_{[0,t]}(y) 
p^{(g^-,g^+)}_{[t,1]}(y) dy.
\label{IF_t_Meander_decomp}
\end{align}
According to \eqref{IF_t_Meander_decomp}, we get
\begin{align}\label{I1_t_Meander_decomp}
\sqrt{\frac{2}{\pi}} 
\int_{g^-(t)}^{g^+(t)}
\frac{1}{\sqrt{t}}q^{(g^-,g^+), (\uparrow)}_{[0,t]}(y)p^{(g^-,g^+)}_{[t,1]}(y)dy 
=I(1)=\lim_{\varepsilon \downarrow 0} 
\frac{P(W \in K_{[0, 1]}(g^--\varepsilon, g^+))}{\varepsilon}
=\sqrt{\frac{2}{\pi}} \widetilde{C}_{g^-,g^+} . 
\end{align}
Combining \eqref{ishitani_meander_2curves}, \eqref{IF_t_Meander_decomp} and \eqref{I1_t_Meander_decomp}, we obtain
\begin{align*}
\widetilde{C}_{g^-,g^+}=\int_{g^-(t)}^{g^+(t)}
\frac{1}{\sqrt{t}}q^{(g^-,g^+), (\uparrow)}_{[0,t]}(y)p^{(g^-,g^+)}_{[t,1]}(y) dy \in (0, \infty)
\end{align*}
and
\begin{align*}
\lim_{\varepsilon \downarrow 0} 
E[F(W_{[0, 1]} |_{K_{[0, 1]}(g^--\varepsilon, g^+)})]
=\frac{I(F)}{I(1)}
=\int_{g^-(t)}^{g^+(t)} 
E\left[ F(X_{[0,t]}^{0,y,(g^-, g^+)} \oplus_{t} X_{[t,1]}^{y,(g^-, g^+)}) \right]  k(t,y) dy.
\end{align*}
Therefore, we can define the probability measure $\widetilde{P}_+$
on $(C([0,1], \mathbb{R}), \mathcal{B}(C([0,1], \mathbb{R})))$ as
\begin{align*}
\widetilde{P}_+(A):= \int_{g^-(t)}^{g^+(t)} 
P\left( X_{[0,t]}^{0,y,(g^-, g^+)} \oplus_{t} X_{[t,1]}^{y,(g^-, g^+)} \in A \right) k(t,y) dy
\quad (A\in \mathcal{B}(C([0,1], \mathbb{R}))),
\end{align*}
and there exists an $\mathbb{R}$-valued 
continuous stochastic process $W^{+,(g^-,g^+)}=\{W^{+,(g^-,g^+)}(t)\}_{t\in [0,1]}$
that satisfies \eqref{curvedMeander_twocurve_def_eq1} 
and \eqref{twocurve_meander_decomp_1}.
Thus, a limit argument on $F$ yields 
\begin{align*}
P(W^{+,(g^-,g^+)}(t) \in dy) = k(t, y)dy\quad (y \in (g^-(t), g^+(t))).
\end{align*}

On the other hand, by Lemma~\ref{Lem_MarkovDecomp_BM}, we obtain
\begin{align}
I(\varepsilon, F) 
&= \int_{g^-(t_2)-\varepsilon}^{g^+(t_2)} dy_2
\int_{g^-(t_1)-\varepsilon}^{g^+(t_1)} dy_1
\label{Meander_MarkovDecomp_double} \\
&\qquad \times E\left[F\left(X_{[0, t_1]}^{0, y_1,(g^--\varepsilon, g^+)} \oplus_{t_1} 
X_{[t_1,t_2]}^{y_1, y_2,(g^--\varepsilon, g^+)} \oplus_{t_2} 
X_{[t_2, 1]}^{y_2, (g^--\varepsilon, g^+)}\right)\right] 
\nonumber \\
&\qquad \times 
P(y_2 + W_{[t_2, 1]} \in K_{[t_2, 1]}(g^--\varepsilon, g^+)) 
\nonumber \\
&\qquad \times 
P(y_1 + W_{[t_1,t_2]} \in K_{[t_1,t_2]}(g^--\varepsilon, g^+), 
y_1 + W_{[t_1,t_2]}(t_2) \in dy_2)/dy_2 
\nonumber \\
&\qquad \times 
P(W_{[0, t_1]} \in K_{[0, t_1]}(g^--\varepsilon, g^+), W_{[0, t_1]}(t_1) \in dy_1)/dy_1.
\nonumber 
\end{align}
By \eqref{Meander_MarkovDecomp_double}, Lemma~\ref{Lem_Gtrans_0t_dy}, 
Proposition~\ref{Prop_Boundary_purturb_WeakConv_for_Meander_BES3bridge}, 
Lemma~\ref{Lem_L1_convergence}, 
Lemma~\ref{Lem_for_H_gm_to_gp} 
and Lemma~\ref{Ap_Lem_weakconv_product}, 
$I(F)$ satisfies
\begin{align}
I(F)=&\lim_{\varepsilon \downarrow 0} 
\frac{I(\varepsilon, F)}{P(W_{[0, t_1]} \in K_{[0, t_1]}^+(-\varepsilon))} 
\times \lim_{\varepsilon \downarrow 0} 
\frac{P(W_{[0, t_1]} \in K_{[0, t_1]}^+(-\varepsilon))}{\varepsilon} \nonumber \\
=&\sqrt{\frac{2}{\pi}}
\int_{g^-(t_1)}^{g^+(t_1)} \int_{g^-(t_2)}^{g^+(t_2)} 
E\left[F\left(X_{[0,t_1]}^{0,y_1,(g^-, g^+)} \oplus_{t_1} 
X_{[t_1,t_2]}^{y_1, y_2,(g^-, g^+)} \oplus_{t_2} 
X_{[t_2, 1]}^{y_2, (g^-, g^+)}\right)\right] \nonumber \\
&\qquad \qquad \qquad \qquad \times 
\frac{1}{\sqrt{t_1}}
q^{(g^-,g^+), (\uparrow)}_{[0,t_1]}(y_1)p^{(g^-,g^+)}_{[t_1,t_2]}(y_1, y_2) p^{(g^-,g^+)}_{[t_2,1]}(y_2) dy_1 dy_2 \nonumber \\
=&\sqrt{\frac{2}{\pi}}\widetilde{C}_{g^-,g^+}
\int_{g^-(t_1)}^{g^+(t_1)} \int_{g^-(t_2)}^{g^+(t_2)} 
E\left[F\left(X_{[0,t_1]}^{0,y_1,(g^-, g^+)} \oplus_{t_1} 
X_{[t_1,t_2]}^{y_1, y_2,(g^-, g^+)} \oplus_{t_2} 
X_{[t_2, 1]}^{y_2, (g^-, g^+)}\right)\right] \label{IF_doubleIntg_kt1t2}\\
&\qquad \qquad \qquad \qquad \qquad \times 
k(t_1, y_1) k(t_1, y_1, t_2, y_2) dy_1 dy_2 . \nonumber
\end{align}
It follows from \eqref{ishitani_meander_2curves}, \eqref{I1_t_Meander_decomp} and \eqref{IF_doubleIntg_kt1t2} that
\begin{align*}
&E[F(W^{+,(g^-,g^+)})] 
=\lim_{\varepsilon \downarrow 0} 
E[F(W_{[0, 1]} |_{K_{[0, 1]}(g^--\varepsilon, g^+)})]\\
&\quad =\int_{g^-(t_1)}^{g^+(t_1)} \int_{g^-(t_2)}^{g^+(t_2)} 
E\left[F\left(X_{[0,t_1]}^{0,y_1,(g^-, g^+)} \oplus_{t_1} 
X_{[t_1,t_2]}^{y_1, y_2,(g^-, g^+)}\oplus_{t_2} 
X_{[t_2, 1]}^{y_2, (g^-, g^+)}\right)\right] \nonumber \\
&\qquad \qquad \qquad \qquad \times k(t_1, y_1)k(t_1, y_1, t_2, y_2) dy_1 dy_2 .
\end{align*}
Hence, \eqref{twocurve_meander_decomp_2} holds. 
Similarly, using a limit argument on $F$, we can deduce for 
$y_1 \in (g^-(t_1), g^+(t_1))$ and $y_2 \in (g^-(t_2), g^+(t_2))$ that
\begin{align}
P(W^{+,(g^-,g^+)}(t_1) \in dy_1, W^{+,(g^-,g^+)}(t_2) \in dy_2) 
&= k(t_1, y_1)k(t_1, y_1, t_2, y_2) dy_1 dy_2, \nonumber \\
P(W^{+,(g^-,g^+)}(t_2) \in dy_2 \ \vert \ W^{+,(g^-,g^+)}(t_1)=y_1) 
&=k(t_1, y_1, t_2, y_2) dy_2.
\label{Proof_Wplus_transdensity}
\end{align}

Because we have
\begin{align*}
p^{(g^-,g^+)}_{[t_1,1]}(y_1) &= 
\int_{g^-(t_2)}^{g^+(t_2)} 
p^{(g^-,g^+)}_{[t_1,t_2]} (y_1, y_2) p^{(g^-,g^+)}_{[t_2,1]}(y_2) dy_2 
\qquad (y_1 \in (g^-(t_1), g^+(t_1))),
\end{align*}
we can deduce that
\begin{align}\label{kt1y1t2y2_density}
\int_{g^-(t_2)}^{g^+(t_2)} k(t_1, y_1, t_2, y_2) dy_2 = 1
\qquad (y_1 \in (g^-(t_1), g^+(t_1))).
\end{align}
The Chapman--Kolmogorov identity for $k$ is obtained by \eqref{Chapman_id_p_y1_y3}.
Therefore, \eqref{Proof_Wplus_transdensity} and \eqref{kt1y1t2y2_density} imply that 
$W^{+,(g^-,g^+)}=\{W^{+,(g^-,g^+)}(t)\}_{t\in [0,1]}$ is a Markov process.

\section{Proofs of the main results in Subsection~\ref{subsection_BES3bridge_between_2curves}}
\label{section_proof_3dBesselbridge_between_2curves}

In this section, we prove 
Theorems~\ref{Thm_Def_and_Decomp_curved_BESbridge_two_curve} and \ref{Thm_WeakConv_BESbridge_to_Moving}.

\subsection{Proof of Theorem~\ref{Thm_Def_and_Decomp_curved_BESbridge_two_curve}}

In this subsection, we assume that all $X_{[s, t]}^{x, y,(g^-, g^+)}$ are independent. 
For each $\mathbb{R}$-valued bounded continuous function $G$ on $C([0,1], \mathbb{R})$ and $\varepsilon>0$, we define
\begin{align*}
I(\varepsilon, G):=E[G(W_{[0,1]})\ ;\ W_{[0,1]}(1)\in d\widetilde{c}, W_{[0,1]} \in K_{[0, 1]}(g^--\varepsilon, g^+)]/d\widetilde{c}
\ \big\vert_{\widetilde{c}=c}.
\end{align*}
Then, we have
\begin{equation}\label{ishitani_BESbridge_2curves}
E[F(B_{[0,1]}^{0\to c} |_{K_{[0, 1]}(g^--\varepsilon, g^+)})] 
= \frac{I(\varepsilon, F)}{I(\varepsilon, 1)}.
\end{equation}
Further, by Lemma~\ref{Lem_MarkovDecomp_B_Bridge}, we obtain
\begin{align}
I(\varepsilon, F) 
&= \int_{g^-(t)-\varepsilon}^{g^+(t)}  
E\left[F\left(X_{[0, t]}^{0, y,(g^--\varepsilon, g^+)} \oplus_{t} 
X_{[t, 1]}^{y, c,(g^--\varepsilon, g^+)}\right)\right] 
\label{BESbridge_MarkovDecomp_single}\\
&\qquad \qquad \times
P(W_{[0, t]} \in K_{[0, t]}(g^--\varepsilon, g^+), W_{[0, t]}(t) \in dy) 
\nonumber \\
&\qquad \qquad \times 
P(y + W_{[t, 1]} \in K_{[t, 1]}(g^--\varepsilon, g^+), y + W_{[t, 1]}(1) \in d\widetilde{c} )/d\widetilde{c}
\ \big\vert_{\widetilde{c}=c}.
\nonumber 
\end{align}
It follows from \eqref{BESbridge_MarkovDecomp_single}, 
Lemma~\ref{Lem_Gtrans_0t_dy}, 
Proposition~\ref{Prop_Boundary_purturb_WeakConv_for_Meander_BES3bridge}, 
Lemma~\ref{Lem_L1_convergence}, 
Lemma~\ref{Lem_for_H_gm_to_gp} 
and Lemma~\ref{Ap_Lem_weakconv_product} that
\begin{align}
I(F):=&\lim_{\varepsilon \downarrow 0} 
\frac{I(\varepsilon, F)}{\varepsilon}
=\lim_{\varepsilon \downarrow 0} 
\frac{I(\varepsilon, F)}{P(W_{[0, t]} \in K_{[0, t]}^+(-\varepsilon ))} 
\times \lim_{\varepsilon \downarrow 0} 
\frac{P(W_{[0, t]} \in K_{[0, t]}^+(-\varepsilon ))}{\varepsilon}
\nonumber \\
=& \sqrt{\frac{2}{\pi}} 
\int_{g^-(t)}^{g^+(t)}
E\left[F\left(X_{[0, t]}^{0, y,(g^-, g^+)} \oplus_{t} X_{[t, 1]}^{y,c,(g^-, g^+)}\right)\right] 
\frac{1}{\sqrt{t}}q^{(g^-,g^+), (\uparrow)}_{[0,t]}(y) 
p^{(g^-,g^+)}_{[t,1]}(y, c) dy .
\label{IF_t_BESbridge_decomp}
\end{align}
According to \eqref{IF_t_BESbridge_decomp}, we get
\begin{align}
&\sqrt{\frac{2}{\pi}} 
\int_{g^-(t)}^{g^+(t)}
\frac{1}{\sqrt{t}}q^{(g^-,g^+), (\uparrow)}_{[0,t]}(y)p^{(g^-,g^+)}_{[t,1]}(y,c)dy 
\nonumber \\
&\quad =I(1)=\lim_{\varepsilon \downarrow 0} 
\frac{P(W(1) \in d\widetilde{c}, W \in K_{[0, 1]}(g^--\varepsilon, g^+))}{\varepsilon d\widetilde{c}}
\ \Big\vert_{\widetilde{c}=c}
\nonumber \\
&\quad =n_1(c)\lim_{\varepsilon \downarrow 0} 
\frac{P(B_{[0,1]}^{0\to c} \in K_{[0, 1]}(g^--\varepsilon, g^+))}{\varepsilon}
=\sqrt{\frac{2}{\pi}} \widehat{C}_{g^-,g^+} . 
\label{I1_t_BESbridge_decomp}
\end{align}
Combining \eqref{ishitani_BESbridge_2curves}, \eqref{IF_t_BESbridge_decomp} and \eqref{I1_t_BESbridge_decomp}, we obtain
\begin{align*}
\widehat{C}_{g^-,g^+}=\int_{g^-(t)}^{g^+(t)}
\frac{1}{\sqrt{t}}q^{(g^-,g^+), (\uparrow)}_{[0,t]}(y)p^{(g^-,g^+)}_{[t,1]}(y, c) dy \in (0, \infty)
\end{align*}
and
\begin{align*}
&\lim_{\varepsilon \downarrow 0} 
E[F(B_{[0, 1]}^{0\to c} |_{K_{[0, 1]}(g^--\varepsilon, g^+)})]
= \frac{I(F)}{I(1)} 
= \int_{g^-(t)}^{g^+(t)} 
E\left[ F\left(X_{[0,t]}^{0,y,(g^-, g^+)} \oplus_{t} X_{[t,1]}^{y,c,(g^-, g^+)}\right) \right] l(t,y) dy .
\end{align*}
Therefore, we can define the probability measure $\widehat{P}^{0\to c}$
on $(C([0,1], \mathbb{R}), \mathcal{B}(C([0,1], \mathbb{R})))$ as
\begin{align*}
\widehat{P}^{0\to c}(A):= \int_{g^-(t)}^{g^+(t)} 
P\left( X_{[0,t]}^{0,y,(g^-, g^+)} \oplus_{t} X_{[t,1]}^{y,c,(g^-, g^+)} \in A \right) l(t,y) dy
\quad (A\in \mathcal{B}(C([0,1], \mathbb{R}))),
\end{align*}
and there exists an $\mathbb{R}$-valued 
continuous stochastic process $r^{0\to c,(g^-,g^+)}=\{r^{0\to c,(g^-,g^+)}(t)\}_{t\in [0,1]}$
that satisfies \eqref{curvedBESbridge_twocurve_def_eq1} and \eqref{twocurve_BESbridge_decomp_1}. 
Thus, a limit argument on $F$ yields 
\begin{align*}
P(r^{0\to c,(g^-,g^+)}(t) \in dy) = l(t, y)dy\quad (y \in (g^-(t), g^+(t))).
\end{align*}

On the other hand, by Lemma~\ref{Lem_MarkovDecomp_B_Bridge}, we obtain
\begin{align}
I(\varepsilon, F) 
&= \int_{g^-(t_2)-\varepsilon}^{g^+(t_2)} dy_2
\int_{g^-(t_1)-\varepsilon}^{g^+(t_1)} dy_1 \label{BESbridge_MarkovDecomp_double} \\
&\qquad \times E\left[F\left(X_{[0, t_1]}^{0, y_1,(g^--\varepsilon, g^+)} \oplus_{t_1} 
X_{[t_1,t_2]}^{y_1, y_2,(g^--\varepsilon, g^+)} \oplus_{t_2} 
X_{[t_2, 1]}^{y_2, c,(g^--\varepsilon, g^+)}\right)\right] 
\nonumber \\
&\qquad \times 
P(y_2 + W_{[t_2, 1]} \in K_{[t_2, 1]}(g^--\varepsilon, g^+), y_2 + W_{[t_2, 1]}(1) \in d\widetilde{c})/d\widetilde{c} \ \big\vert_{\widetilde{c}=c} 
\nonumber \\
&\qquad \times 
P(y_1 + W_{[t_1,t_2]} \in K_{[t_1,t_2]}(g^--\varepsilon, g^+), 
y_1 + W_{[t_1,t_2]}(t_2) \in dy_2)/dy_2 
\nonumber \\
&\qquad \times 
P(W_{[0, t_1]} \in K_{[0, t_1]}(g^--\varepsilon, g^+), W_{[0, t_1]}(t_1) \in dy_1)/dy_1. 
\nonumber
\end{align}
By \eqref{BESbridge_MarkovDecomp_double}, Lemma~\ref{Lem_Gtrans_0t_dy}, 
Proposition~\ref{Prop_Boundary_purturb_WeakConv_for_Meander_BES3bridge}, 
Lemma~\ref{Lem_L1_convergence}, 
Lemma~\ref{Lem_for_H_gm_to_gp} 
and Lemma~\ref{Ap_Lem_weakconv_product}, 
$I(F)$ satisfies
\begin{align}
I(F)=&\lim_{\varepsilon \downarrow 0} 
\frac{I(\varepsilon, F)}{P(W_{[0, t_1]} \in K_{[0, t_1]}^+(-\varepsilon))} 
\times \lim_{\varepsilon \downarrow 0} 
\frac{P(W_{[0, t_1]} \in K_{[0, t_1]}^+(-\varepsilon))}{\varepsilon} 
\nonumber \\
=&\sqrt{\frac{2}{\pi}}
\int_{g^-(t_1)}^{g^+(t_1)} \int_{g^-(t_2)}^{g^+(t_2)} 
E\left[F\left(X_{[0,t_1]}^{0,y_1,(g^-, g^+)} \oplus_{t_1} 
X_{[t_1,t_2]}^{y_1, y_2,(g^-, g^+)} \oplus_{t_2} 
X_{[t_2, 1]}^{y_2, c,(g^-, g^+)}\right)\right] 
\nonumber \\
&\qquad \qquad \qquad \qquad \times 
\frac{1}{\sqrt{t_1}}q^{(g^-,g^+), (\uparrow)}_{[0,t_1]}(y_1)p^{(g^-,g^+)}_{[t_1,t_2]}(y_1, y_2) p^{(g^-,g^+)}_{[t_2,1]}(y_2, c) dy_1 dy_2
\nonumber \\
=&\sqrt{\frac{2}{\pi}}\widehat{C}_{g^-,g^+}
\int_{g^-(t_1)}^{g^+(t_1)} \int_{g^-(t_2)}^{g^+(t_2)} 
E\left[F\left(X_{[0,t_1]}^{0,y_1,(g^-, g^+)} \oplus_{t_1} 
X_{[t_1,t_2]}^{y_1, y_2,(g^-, g^+)} \oplus_{t_2} 
X_{[t_2, 1]}^{y_2, c,(g^-, g^+)}\right)\right] \label{IF_doubleIntg_lt1t2}\\
&\qquad \qquad \qquad \qquad \qquad \times 
l(t_1, y_1) l(t_1, y_1, t_2, y_2) dy_1 dy_2 . \nonumber
\end{align}
It follows from \eqref{ishitani_BESbridge_2curves}, \eqref{I1_t_BESbridge_decomp} and \eqref{IF_doubleIntg_lt1t2} that
\begin{align*}
&E[F(r^{0\to c,(g^-,g^+)})] 
=\lim_{\varepsilon \downarrow 0} 
E[F(B_{[0, 1]}^{0\to c} |_{K_{[0, 1]}(g^--\varepsilon, g^+)})]\\
&\quad =\int_{g^-(t_1)}^{g^+(t_1)} \int_{g^-(t_2)}^{g^+(t_2)} 
E\left[F\left(X_{[0,t_1]}^{0,y_1,(g^-, g^+)} \oplus_{t_1} 
X_{[t_1,t_2]}^{y_1, y_2,(g^-, g^+)}\oplus_{t_2} 
X_{[t_2, 1]}^{y_2, c,(g^-, g^+)}\right)\right] \\
&\qquad \qquad \qquad \quad \times 
l(t_1, y_1)l(t_1, y_1, t_2, y_2) dy_1 dy_2 .
\end{align*}
Hence, \eqref{twocurve_BESbridge_decomp_2} holds. 
Similarly, using a limit argument on $F$, we can deduce for 
$y_1 \in (g^-(t_1), g^+(t_1))$ and $y_2 \in (g^-(t_2), g^+(t_2))$ that
\begin{align}
P(r^{0\to c,(g^-,g^+)}(t_1) \in dy_1, r^{0\to c,(g^-,g^+)}(t_2) \in dy_2) &= l(t_1, y_1)l(t_1, y_1, t_2, y_2) dy_1 dy_2, 
\nonumber \\
P(r^{0\to c,(g^-,g^+)}(t_2) \in dy_2 \ \vert \ r^{0\to c,(g^-,g^+)}(t_1)=y_1) &=l(t_1, y_1, t_2, y_2) dy_2.
\label{r0toc_transdensity}
\end{align}

Because we have
\begin{align*}
p^{(g^-,g^+)}_{[t_1,1]}(y_1, c) &= 
\int_{g^-(t_2)}^{g^+(t_2)} 
p^{(g^-,g^+)}_{[t_1,t_2]} (y_1, y_2) p^{(g^-,g^+)}_{[t_2,1]}(y_2, c) dy_2 
\qquad (y_1 \in (g^-(t_1), g^+(t_1))),
\end{align*}
we can deduce that
\begin{align}\label{lt1y1t2y2_density}
\int_{g^-(t_2)}^{g^+(t_2)} l(t_1, y_1, t_2, y_2) dy_2 = 1
\qquad (y_1 \in (g^-(t_1), g^+(t_1))).
\end{align}
The Chapman--Kolmogorov identity for $l$ is obtained by \eqref{Chapman_id_p_y1_y3}. 
Therefore, \eqref{r0toc_transdensity} and \eqref{lt1y1t2y2_density} imply that 
$r^{0\to c,(g^-,g^+)}=\{r^{0\to c,(g^-,g^+)}(t)\}_{t\in [0,1]}$ is a Markov process.

\subsection{Proof of Theorem~\ref{Thm_WeakConv_BESbridge_to_Moving}}

In this subsection, we assume that all $X_{[s, t]}^{x,y,(g^-, g^+)}$ are independent. 
For each $F$, $\varepsilon>0$ and $\eta>0$, we have
\begin{equation}\label{BESbrige_to_Moving_ep_eta}
E[F(B_{[0,1]}^{0 \to b} |_{K_{[0, 1]}(g^--\varepsilon, g^++\eta)})] 
= \frac{I(\varepsilon, \eta, F)}{I(\varepsilon, \eta, 1)},
\end{equation}
where 
\begin{align*}
I(\varepsilon, \eta, F):=E[F(W_{[0,1]})\ ;\ W_{[0,1]}(1) \in d\widetilde{b}, W_{[0,1]} \in K_{[0, 1]}(g^--\varepsilon, g^++\eta)]/d\widetilde{b} \ \big\vert_{\widetilde{b} = b}.
\end{align*} 
Then, by Lemma~\ref{Lem_MarkovDecomp_B_Bridge}, we obtain
\begin{align}
&I(\varepsilon, \eta, F) 
= \int_{g^-(t)-\varepsilon}^{g^+(t)+\eta}  
E\left[F\left(X_{[0, t]}^{0, y,(g^--\varepsilon, g^++\eta)} \oplus_{t} 
X_{[t, 1]}^{y, b,(g^--\varepsilon, g^++\eta)}\right)\right] 
\label{BESbridge_MarkovDecomp_epeta_single}\\
&\qquad \qquad \quad \times 
P(W_{[0, t]} \in K_{[0, t]}(g^--\varepsilon, g^++\eta), W_{[0, t]}(t) \in dy) 
\nonumber \\
&\qquad \qquad \quad \times 
P(y + W_{[t, 1]} \in K_{[t, 1]}(g^--\varepsilon, g^++\eta), y + W_{[t, 1]}(1) \in d\widetilde{b})/d\widetilde{b} \ \big\vert_{\widetilde{b} = b}.
\nonumber 
\end{align}
By \eqref{BESbridge_MarkovDecomp_epeta_single}, Lemma~\ref{Lem_Gtrans_0t_dy}, Lemma~\ref{Lem_Gtrans_t1_dy}, 
Proposition~\ref{Prop_Boundary_purturb_WeakConv_for_Meander_BES3bridge}, 
Lemma~\ref{Lem_L1_convergence}, 
Lemma~\ref{Lem_for_H_gm_to_gp} 
and Lemma~\ref{Ap_Lem_weakconv_product}, it holds that
\begin{align}
I(F)
:=&\lim_{\eta \downarrow 0}\lim_{\varepsilon \downarrow 0} 
\frac{I(\varepsilon, \eta, F)}{\varepsilon \eta}
\nonumber \\
=& \frac{2}{\pi} \int_{g^-(t)}^{g^+(t)}
E\left[F\left(X_{[0, t]}^{0, y,(g^-, g^+)} \oplus_{t} X_{[t, 1]}^{y, b,(g^-, g^+)}\right)\right] 
\frac{1}{\sqrt{t}}q^{(g^-,g^+), (\uparrow)}_{[0,t]}(y)\frac{1}{\sqrt{1-t}}q^{(g^-,g^+), (\downarrow)}_{[t,1]}(y)dy .
\label{IF_t_decomp_again}
\end{align}
By \eqref{I1_t_decomp} and \eqref{IF_t_decomp_again}, it holds that
\begin{align}
\frac{2}{\pi} 
\int_{g^-(t)}^{g^+(t)}
\frac{1}{\sqrt{t}}q^{(g^-,g^+), (\uparrow)}_{[0,t]}(y)\frac{1}{\sqrt{1-t}}q^{(g^-,g^+), (\downarrow)}_{[t,1]}(y)dy 
=I(1)=\frac{2}{\pi} C_{g^-,g^+} . 
\label{I1_t_decomp_again}
\end{align}
Combining Theorem~\ref{Thm_Def_and_Decomp_curved_BESbridge_two_curve} 
and \eqref{BESbrige_to_Moving_ep_eta}, we obtain
\begin{align}
\lim_{\eta \downarrow 0} 
E[F(r^{0\to b, (g^-,g^++\eta)})]
=\lim_{\eta \downarrow 0} 
\lim_{\varepsilon \downarrow 0} 
E[F(B_{[0, 1]}^{0 \to b} |_{K_{[0, 1]}(g^--\varepsilon, g^++\eta)})]
=\lim_{\eta \downarrow 0} 
\lim_{\varepsilon \downarrow 0} \frac{I(\varepsilon, \eta, F)}{I(\varepsilon, \eta, 1)}
= \frac{I(F)}{I(1)}.
\label{limeq_IepetaF_over_Iepeta1}
\end{align}
Therefore, it follows from \eqref{IF_t_decomp_again}, \eqref{I1_t_decomp_again}, \eqref{limeq_IepetaF_over_Iepeta1} 
and Theorem~\ref{Thm_Def_and_Decomp_curved_Moving} that
\begin{align*}
\lim_{\eta \downarrow 0} 
E[F(r^{0\to b, (g^-,g^++\eta)})]
= \int_{g^-(t)}^{g^+(t)} E\left[ F\left(X_{[0,t]}^{0,y,(g^-, g^+)} \oplus_{t} X_{[t,1]}^{y,b,(g^-, g^+)}\right) \right] h(t, y)dy 
=E\left[ F(H^{g^-\to g^+}) \right].
\end{align*}

\section{Conclusion and future work}\label{Sec_conclusion_future_work}

We introduced a stochastic process called Brownian house-moving. 
We also studied the sample path properties of Brownian house-moving. 

In \cite{bib_Ishitani}, we proposed a chain rule for Wiener path integrals between two curves, 
which is used to compute first-order Greeks for barrier options.
In this chain rule, the following terms
\begin{align}
&\int_0^1 
\widetilde{\nu}^{(-)}_{g^-, g^+}(t)
E\Big[F\big(
X_{[0,t]}^{0,g^-(t),(g^-, g^+)} \oplus_{t}  
X_{[t, 1]}^{g^-(t),(g^-, g^+)}\big)\Big] dt ,
\label{FirstOrderBdryTerm_m}\\
&\int_0^1 
\widetilde{\nu}^{(+)}_{g^-, g^+}(t)
E\Big[F\big(
X_{[0,t]}^{0,g^+(t),(g^-, g^+)} \oplus_{t}  
X_{[t, 1]}^{g^+(t),(g^-, g^+)}\big)\Big] dt 
\label{FirstOrderBdryTerm_p}
\end{align}
appear, where 
$g^{\pm}$ are $\mathbb{R}$-valued $C^2$-functions defined on $[0, 1]$ that satisfy 
$ g^-(0)<0<g^+(0)$ and 
$\min_{0 \leq t \leq 1}(g^+(t) - g^-(t)) > 0$, 
$F$ is a $\mathbb{R}$-valued continuous function on $C([0,1], \mathbb{R})$, 
$\{ \widetilde{\nu}^{(\pm)}_{g^-, g^+}(t)\}_{0\leq t \leq 1}$ are some functions. 
Further, we are currently investigating higher-order chain rules 
to compute higher-order Greeks for barrier options. 
For example, the following terms 
\begin{align}
&\int_0^1 \left(
\int_0^{t_2}
\widetilde{\nu}^{(-,-)}_{g^-, g^+}(t_1, t_2)
E\Big[F\big(
X_{[0,t_1]}^{0,g^-(t_1),(g^-, g^+)} \oplus_{t_1} 
X_{[t_1,t_2]}^{g^-(t_1), g^-(t_2),(g^-, g^+)}\oplus_{t_2} 
X_{[t_2, 1]}^{g^-(t_2),(g^-, g^+)}\big)\Big] dt_1 \right) dt_2 ,
\label{SecondOrderBdryTerm_mm}\\
&\int_0^1 \left(
\int_0^{t_2}
\widetilde{\nu}^{(-,+)}_{g^-, g^+}(t_1, t_2)
E\Big[F\big(
X_{[0,t_1]}^{0,g^-(t_1),(g^-, g^+)} \oplus_{t_1} 
X_{[t_1,t_2]}^{g^-(t_1), g^+(t_2),(g^-, g^+)}\oplus_{t_2} 
X_{[t_2, 1]}^{g^+(t_2),(g^-, g^+)}\big)\Big] dt_1 \right) dt_2 ,
\label{SecondOrderBdryTerm_mp}\\
&\int_0^1 \left(
\int_0^{t_2}
\widetilde{\nu}^{(+,-)}_{g^-, g^+}(t_1, t_2)
E\Big[F\big(
X_{[0,t_1]}^{0,g^+(t_1),(g^-, g^+)} \oplus_{t_1} 
X_{[t_1,t_2]}^{g^+(t_1), g^-(t_2),(g^-, g^+)}\oplus_{t_2} 
X_{[t_2, 1]}^{g^-(t_2),(g^-, g^+)}\big)\Big] dt_1 \right) dt_2 ,
\label{SecondOrderBdryTerm_pm}\\
&\int_0^1 \left(
\int_0^{t_2}
\widetilde{\nu}^{(+,+)}_{g^-, g^+}(t_1, t_2)
E\Big[F\big(
X_{[0,t_1]}^{0,g^+(t_1),(g^-, g^+)} \oplus_{t_1} 
X_{[t_1,t_2]}^{g^+(t_1), g^+(t_2),(g^-, g^+)}\oplus_{t_2} 
X_{[t_2, 1]}^{g^+(t_2),(g^-, g^+)}\big)\Big] dt_1 \right) dt_2 
\label{SecondOrderBdryTerm_pp}
\end{align}
have been found to appear in a second-order chain rule, 
where $\{ \widetilde{\nu}^{(\pm, \pm)}_{g^-, g^+}(t_1,t_2)\}_{0\leq t_1<t_2 \leq 1}$ are some functions. 
Note that the stochastic processes
\begin{align*}
X_{[0,t_1]}^{0,g^-(t_1),(g^-, g^+)}, \quad X_{[0,t_1]}^{0,g^+(t_1),(g^-, g^+)}, \quad 
X_{[t_1,t_2]}^{g^-(t_1), g^-(t_2),(g^-, g^+)}, \quad X_{[t_1,t_2]}^{g^+(t_1), g^+(t_2),(g^-, g^+)}
\end{align*}
appearing in 
\eqref{SecondOrderBdryTerm_mm}, 
\eqref{SecondOrderBdryTerm_mp}, 
\eqref{SecondOrderBdryTerm_pm}, and
\eqref{SecondOrderBdryTerm_pp}
are already constructed in Lemma~\ref{Lem_for_H_gm_to_gp}. 
Note also that the stochastic processes
\begin{align*}
X_{[t_2, 1]}^{g^-(t_2),(g^-, g^+)}, \quad 
X_{[t_2, 1]}^{g^+(t_2),(g^-, g^+)}
\end{align*} 
appearing in 
\eqref{SecondOrderBdryTerm_mm}, 
\eqref{SecondOrderBdryTerm_mp}, 
\eqref{SecondOrderBdryTerm_pm}, and
\eqref{SecondOrderBdryTerm_pp}
are already constructed in Lemma~\ref{Lem_WeakConv_Meander_btwn_2crvs}.
Furthermore, the stochastic processes
\begin{align*}
X_{[t_1,t_2]}^{g^-(t_1), g^+(t_2),(g^-, g^+)}, \quad
X_{[t_1,t_2]}^{g^+(t_1), g^-(t_2),(g^-, g^+)}
\end{align*}
which appear in 
\eqref{SecondOrderBdryTerm_mp} and
\eqref{SecondOrderBdryTerm_pm}, 
are scaled Brownian house-movings. 
We aim to develop these higher-order chain rules further 
and apply them to compute higher-order Greeks for barrier options in our future work.

\appendix
\section{Appendix}

In this appendix, we prepare several results. 
Although some of the results in this appendix are either well known or easy to obtain, 
we prove them for completeness.

\subsection{Some basic results}

Here, we prepare some basic results which are used throughout the paper.

\begin{lem}\label{Ap_BM_Density_Formulas_type1}
Let $W = \{ W(t) \}_{t \geq 0}$ be the standard one-dimensional Brownian motion 
defined on $(\Omega, \mathcal{F}, P)$.  For $t>0$, we have
\begin{align}
&P\left(W(t) \in dz, m_t(W) \geq -\varepsilon \right)
= \left( n_t(z) - n_t(z+2\varepsilon) \right) dz, 
\qquad (z>-\varepsilon), 
\label{BM_Density_No1_Wt_mt}\\
&P\left(W(t) \in dz, -\varepsilon \leq m_t(W) < M_t(W) \leq \eta \right) 
\label{BM_Density_No2_Wt_mt_Mt}  \\
&\quad = \sum_{k=-\infty}^{\infty} 
\left( n_t(z+2k(\eta +\varepsilon )) 
- n_t(2\eta -z+2k(\eta +\varepsilon )) \right) dz ,
\qquad (-\varepsilon<z< \eta).
\nonumber
\end{align}
For $0<t<u$, we have
\begin{align}
&P\left( W(t) \in dy, W(u) \in dz, m_u(W) \geq -\varepsilon \right)
\label{BM_Density_No3_Wt_Wu_mu}\\
&\quad =\left( n_{u-t}(z-y) - n_{u-t}(z+y+2\varepsilon ) \right) 
\left( n_t(y) - n_t(y+2\varepsilon ) \right) dy dz, 
\qquad (y,z>-\varepsilon).
\nonumber 
\end{align}
For $0<s<t<u$, we have
\begin{align}
&P\left(  W(s) \in dx, W(t) \in dy, W(u) \in dz, m_u(W) \geq -\varepsilon \right) 
\label{BM_Density_No5_Ws_Wt_Wu_mu}\\
&\quad = 
\left( n_{u-t}(z-y) - n_{u-t}(z+y+2\varepsilon ) \right)\nonumber \\
&\qquad \times \left( n_{t-s}(y-x) - n_{t-s}(y+x+2\varepsilon ) \right) \nonumber \\
&\qquad \times \left( n_s(x) - n_s(x+2\varepsilon ) \right) dx dy dz. 
\qquad (x,y,z>-\varepsilon) \nonumber
\end{align}
\end{lem}
\begin{proof}
In this proof, 
$(\Omega, \mathcal{F}), W = \{ W(t) \}_{t \geq 0} , 
\left(P^a \right)_{a \in \mathbb{R}}$ 
denotes the one-dimensional Brownian family, 
and $P^0$ is written simply as $P$. 
We can find (\ref{BM_Density_No1_Wt_mt}) and 
(\ref{BM_Density_No2_Wt_mt_Mt}) in \cite{bib_Billingsley}.
Using the Markov property of $W$ and (\ref{BM_Density_No1_Wt_mt}), 
we have
\begin{align*}
&P\left( W(t) \in dy, W(u) \in dz, m_u(W) \geq -\varepsilon \right) \\
&\quad = 
E\left[ P\left( W(t) \in dy, m_u(W) \geq -\varepsilon, W(u) \in dz 
~|~ \mathcal{F}_t^W \right) \right] \\
&\quad = P^y \left( W(u-t) \in dz, m_{u-t}(W) \geq -\varepsilon \right) 
P\left( W(t) \in dy, m_t(W) \geq -\varepsilon \right) \\
&\quad = P\left( y+W(u-t) \in dz, y+m_{u-t}(W) \geq -\varepsilon \right) 
P\left( W(t) \in dy, m_t(W) \geq -\varepsilon \right) \\
&\quad = \left( n_{u-t}(z-y) - n_{u-t}(z+y+2\varepsilon ) \right) 
\left( n_t(y) - n_t(y+2\varepsilon ) \right) dy dz .
\end{align*}
Using the Markov property of $W$,  (\ref{BM_Density_No1_Wt_mt}), 
and (\ref{BM_Density_No3_Wt_Wu_mu}), we also have
\begin{align*}
&P\left( W(s) \in dx, W(t) \in dy, W(u) \in dz, m_u(W) \geq -\varepsilon \right) \\
&\quad = E\left[ P\left( W(s) \in dx, W(t) \in dy, W(u) \in dz, 
m_u(W) \geq -\varepsilon~|~\mathcal{F}_t^W \right) \right] \\
&\quad = P^y \left( W(u-t) \in dz, m_{u-t}(W) \geq -\varepsilon \right) \\
&\qquad \times P\left( W(s) \in dx, W(t) \in dy, m_t(W) \geq -\varepsilon \right) \\
&\quad = P\left( y+W(u-t) \in dz, y+m_{u-t}(W) \geq -\varepsilon \right) \\
&\qquad \times
P\left( W(s) \in dx, W(t) \in dy, m_t(W) \geq -\varepsilon \right) \\
&\quad = 
\left( n_{u-t}(z-y) - n_{u-t}(z+y+2\varepsilon )\right)
\left( n_{t-s}(y-x) - n_{t-s}(y+x+2\varepsilon ) \right) \\
&\qquad \times
\left( n_s(x) - n_s(x+2\varepsilon ) \right) dx dy dz.
\end{align*}
\end{proof}

The following lemma is obtained by Prohorov's theorem.
\begin{lem}\label{Ap_Lem_weakconv_product}
Let $S_1$ and $S_2$ be Polish spaces.  
Let $X_n$ and $Y_n$ be random variables 
defined on $(\Omega_n, \mathcal{F}_n, P_n)$ 
that take values in $S_1$ and $S_2$, respectively. 
If $X_n$ and $Y_n$ are independent and 
$P_n \circ X_n^{-1}$ and $P_n \circ Y_n^{-1}$ converge weakly to 
probability measures $Q$ on $S_1$ and $R$ on $S_2$, respectively, 
then $P_n \circ (X_n, Y_n)^{-1}$ 
converges weakly to the product measure $Q \times R$.
\end{lem}

\begin{lem}\label{Ap_Lem_HittingTime_of_BES3bridge}
Let $T>0$, and 
let $\mathbb{R}$-valued $C^1$-function $g$ defined on $[0, T]$ that satisfies $\min_{0\leq t\leq T}g(t)>0$.  
Then, for the BES($3$)-process $R_{[0, T]}$ 
starting at $0$ on $[0, T]$ and $b \in (0, g(T))$, we have
\begin{align*}
P\left(T_g \leq T, R_{[0, T]} \in K_{[0, T]}^-(g), R_{[0, T]}(T) \in db \right) = 0.
\end{align*}
Here, $T_g$ denotes the hitting time of $R_{[0, T]}$ to $g$.
\end{lem}
\begin{proof}
Since $b \in (0, g(T))$, what we must prove is
\begin{align}\label{HittingTimeBES3bridge_proof_eq_step1}
P\left(T_g < T, R_{[0, T]} \in K_{[0, T]}^-(g), R_{[0, T]}(T) \in db \right) = 0.
\end{align}
Here, $R_{[0, T]}$ satisfies
\begin{align*}
R_{[0, T]}(t)=g(T_g)+\int_{T_g}^t \frac{1}{R_{[0, T]}(s)}ds
+(\beta(t)-\beta(T_g)), \quad t\in [T_g, T]
\end{align*}
on $\{T_g < T\}$, where $\beta$ is a standard one-dimensional Brownian motion. 
Using sample path properties of $\beta$, 
we obtain \eqref{HittingTimeBES3bridge_proof_eq_step1}.
\end{proof}

\begin{lem}\label{Lem_Boundary_Prob_for_Meander_BES3bridge}
Let $g$ be an $\mathbb{R}$-valued $C^1$-function defined on $[0,1]$ that satisfies $\min_{0 \leq t \leq 1} g(t) > 0$.  
Then, for $b\in (0, g(1))$, we have  
$P(r^{0 \to b} \in \partial K^-(g)) = 0$ and $P(W^+ \in \partial K^-(g)) = 0$. 
\end{lem}
\begin{proof}
We obtain
\begin{align}\label{partialKming}
&\partial K^-(g) = K^-(g) - {\rm int}(K^-(g))= \left\{ w \in K^-(g) \ \big\vert \ \min_{0 \leq t \leq 1} (g(t) - w(t)) = 0 \right\}.
\end{align}
Then, \eqref{partialKming} and Lemma~\ref{Ap_Lem_HittingTime_of_BES3bridge} imply that
$P(r^{0 \to b} \in \partial K^-(g)) = 0$ and 
\begin{align*}
P(W^+ \in \partial K^-(g) ) = \int_0^\infty P(r^{0 \to b} \in \partial K^-(g)) P(W^+(1) \in db) = 0
\end{align*}
hold.
\end{proof}

We can find the following proposition in \cite{bib_Durret_1977}, 
which is stated there without proof.
\begin{prop}\label{Ap_Prop_cond_Markov_is_Markov}
Let $(T, \mathcal{T})$ be a measurable space and 
$(\Omega, \mathcal{F}, P)$ be a probability space, 
and let $Y = \{Y(t), \mathcal{F}_t^{Y}, 0 \leq t \leq 1 \}$ 
be a $T$-valued Markov process on $(\Omega, \mathcal{F}, P)$. 
For $\Lambda \in \mathcal{F}$ with $P(\Lambda) > 0$, 
we define a new probability space 
$(\Lambda, \Lambda \cap \mathcal{F}, P_{\Lambda})$ by 
$\Lambda \cap \mathcal{F} := \{ \Lambda \cap F \mid F \in \mathcal{F} \}$ and
\begin{align*}
P_{\Lambda}(F) := \frac {P(\Lambda \cap F)} {P(\Lambda)}.
\end{align*}
Assume that for $t \in [0,1]$ there exist $A_t \in \mathcal{F}_t^{Y}$ 
and $B_t \in \sigma(Y(s)~|~t \leq s \leq 1)$ 
that satisfy $\Lambda = A_t \cap B_t$. 
If we write the restriction $Y$ to 
$(\Lambda, \Lambda \cap \mathcal{F}, P_{\Lambda})$ as $Y_{\Lambda}$, 
then 
$Y_{\Lambda} = \{Y_{\Lambda}(t), \mathcal{F}_t^{Y_{\Lambda}}, 0 \leq t \leq 1 \}$ 
is a $T$-valued Markov process
on $(\Lambda, \Lambda \cap \mathcal{F}, P_{\Lambda})$.
\end{prop}
\begin{proof}
For $0<t < s\leq1$ and $\Gamma \in \mathcal{T}$, 
we must show that 
$P_{\Lambda}(Y_{\Lambda}(s) \in \Gamma\, |\, \mathcal{F}_t^{Y_{\Lambda}})$ 
has a $\sigma(Y_{\Lambda}(t))$-measurable version. 
Let $0= t_0 < t_1 < \cdots < t_n = t < s \leq 1$, 
$K_0, K_1, \dots, K_n \in \mathcal{T}$ be given. 
We define a measure $\mu$ on $(T^{n+1}, \mathcal{T}^{n+1})$ by
\begin{align*}
\mu (C) := P(\{ (Y(t_0), Y(t_1), \dots, Y(t_n)) \in C \} \cap A_t), 
\qquad C \in \mathcal{T}^{n+1}.
\end{align*}
Using the Markov property of $Y$, we obtain
\begin{align}
&P(\{ Y(t_0) \in K_0, Y(t_1) \in K_1, \dots, Y(t_n) \in K_n, Y(s) \in \Gamma \} \cap \Lambda) \nonumber \\
&\quad = E[P(\{ Y(s) \in \Gamma \} \cap B_t\, |\, \mathcal{F}_t^{Y})
\ ;\ \{ Y(t_0) \in K_0, Y(t_1) \in K_1, \dots, Y(t_n) \in K_n \} \cap A_t] \nonumber \\
&\quad = \int_{K_0 \times K_1 \times \cdots \times K_n} 
P(\{ Y(s) \in \Gamma \} \cap B_t\, |\, Y(t_n) = x_n) \mu(dx_0 dx_1 \cdots dx_n).
\label{eq1_Makov_Prop}
\end{align}
On the other hand, 
for any $\mathcal{T}/\mathcal{B}([0,\infty))$-measurable function 
$f :T \to [0, \infty)$, we have
\begin{align}
&E[f(Y(t_n))\ ;\ \{ Y(t_0) \in K_0, Y(t_1) \in K_1, \dots, Y(t_n) \in K_n \} \cap \Lambda] 
\nonumber \\
&\quad = E[P(B_t\, |\, \mathcal{F}_t^{Y})f(Y(t_n)) 
\ ;\ \{ Y(t_0) \in K_0, Y(t_1) \in K_1, \dots, Y(t_n) \in K_n \} \cap A_t] \nonumber \\
&\quad = \int_{K_0 \times K_1 \times \cdots \times K_n} P(B_t\, |\, Y(t_n) = x_n)f(x_n) \mu(dx_0 dx_1 \cdots dx_n), 
\label{eq2_f_Makov_Prop}
\end{align}
where $h(x_n)=P(B_t\, |\, Y(t_n) = x_n)$ is a Borel measurable function satisfying 
$h(Y(t_n))=E[1_{B_t} \ \vert \ \sigma(Y(t_n))]$.
Therefore, applying \eqref{eq2_f_Makov_Prop} for
\begin{align*}
f(x_n) := &
\begin{cases}
\dfrac{P(\{ Y(s) \in \Gamma \} \cap B_t\, |\, Y(t) = x_n)}{P(B_t\, |\, Y(t) = x_n)}, 
& \text{{\rm for}} ~P(B_t\, |\, Y(t) = x_n) > 0, \\
0, & \text{{\rm otherwise}},
\end{cases} 
\end{align*}
we can obtain
\begin{align*}
&P(\{ Y(t_0) \in K_0, Y(t_1) \in K_1, \dots, Y(t_n) \in K_n, Y(s) \in \Gamma \} \cap \Lambda) \\
&\quad = E[f(Y(t_n))\ ;\ \{ Y(t_0) \in K_0, Y(t_1) \in K_1, \dots, Y(t_n) \in K_n \} \cap \Lambda]
\end{align*}
by \eqref{eq1_Makov_Prop}. Dividing by $P(\Lambda)$, we obtain
\begin{align*}
&P_{\Lambda}(Y_{\Lambda}(t_0) \in K_0, Y_{\Lambda}(t_1) \in K_1, \dots, 
Y_{\Lambda}(t_n) \in K_n, Y_{\Lambda}(s) \in \Gamma) \\
&\quad = E_{\Lambda}[f(Y_{\Lambda}(t_n)) 
\ ;\ Y_{\Lambda}(t_0) \in K_0, Y_{\Lambda}(t_1) \in K_1, \dots, Y_{\Lambda}(t_n) \in K_n],
\end{align*}
and, hence, Dynkin's $\pi$-$\lambda$ theorem yields 
$P_{\Lambda}(Y_{\Lambda}(s) \in \Gamma\, |\, \mathcal{F}_t^{Y_{\Lambda}}) 
= f(Y_{\Lambda}(t_n)) = f(Y_{\Lambda}(t))$.
\end{proof}


\subsection{Weak convergence to BES($3$)-bridge}
\label{section_Brownianbridge_to_BES3_bridge}

It has been shown in \cite{bib_Durret_1977} 
that the one-dimensional Brownian bridge from $0$ to $0$ 
conditioned to stay in $[-\varepsilon, \infty)$ converges weakly to 
the Brownian excursion (i.e., the BES($3$)-bridge from $0$ to $0$). 
Motivated by this research, we prove the following weak convergence 
that is used to construct the Brownian house-moving. 

\begin{thm}\label{Th_conv_Brownianbridge_to_3dBesselbridge}
Let $b\geq 0$ and $B^{0 \to b} = \{ B^{0 \to b} (t) \}_{t \in [0,1]}$ 
be the one-dimensional Brownian bridge from $0$ to $b$ on $[0, 1]$, 
and let $r^{0 \to b} = \{ r^{0 \to b}(t) \}_{t \in [0,1]}$ 
be the BES($3$)-bridge from $0$ to $b$ on $[0, 1]$. Then, we have 
\begin{align*}
B^{0 \to b}|_{K^+(-\varepsilon )} \xrightarrow{\mathcal{D}} r^{0 \to b}, 
\qquad \varepsilon \downarrow 0,
\end{align*}
where $K^+(-\varepsilon ) := 
\{ w = \{ w(t) \}_{t \in [0,1]} \in C([0,1], \mathbb{R} )~|~
-\varepsilon \leq w(t), ~0 \leq t \leq 1 \}$. 
\end{thm}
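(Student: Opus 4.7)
The plan is the classical two-step approach to weak convergence on $C([0,1],\mathbb{R})$: prove (I) convergence of finite-dimensional distributions and (II) tightness of the family of laws, then invoke Prohorov's theorem. A convenient reformulation is obtained by translation: the law of $B^{0\to b}|_{K^+(-\varepsilon)}$ coincides with the law of $-\varepsilon+\widetilde B^{(\varepsilon)}$, where $\widetilde B^{(\varepsilon)}$ is a Brownian bridge from $\varepsilon$ to $b+\varepsilon$ conditioned to stay non-negative on $[0,1]$. The limit $r^{0\to b}$ is exactly the Doob $h$-transform of the Brownian bridge from $0$ to $b$ by the harmonic function $y\mapsto y$ for Brownian motion killed at $0$, i.e.\ the natural $\varepsilon=0$ case of $\widetilde B^{(\varepsilon)}$.

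For Step (I), the reflection principle gives the joint density of $\widetilde B^{(\varepsilon)}$ at $0<t_1<\cdots<t_n<1$ as the ratio
\begin{align*}
\frac{q_{t_1}(\varepsilon,y_1)\prod_{i=1}^{n-1}q_{t_{i+1}-t_i}(y_i,y_{i+1})\,q_{1-t_n}(y_n,b+\varepsilon)}{q_1(\varepsilon,b+\varepsilon)},\qquad y_1,\ldots,y_n>0,
\end{align*}
where $q_t(x,y):=n_t(y-x)-n_t(y+x)$ is the transition kernel of Brownian motion absorbed at $0$. The identity $\partial_x q_t(0,y)=(2y/t)\,n_t(y)$ shows that $q_t(\varepsilon,y)$ vanishes linearly in $\varepsilon$; dividing numerator and denominator by the appropriate power of $\varepsilon$ (one power for $b>0$, two for $b=0$) and passing to the limit yields the well-known joint density of the BES(3)-bridge $r^{0\to b}$. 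This, together with the trivial shift $-\varepsilon\to 0$, gives convergence of finite-dimensional distributions.

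For Step (II), I would verify a Kolmogorov--Chentsov fourth-moment bound $E[|\widetilde B^{(\varepsilon)}(t)-\widetilde B^{(\varepsilon)}(s)|^4]\leq C|t-s|^2$ uniformly in $\varepsilon\in(0,\varepsilon_0)$ and $s,t\in[\delta,1-\delta]$, using the explicit density above and standard Gaussian-kernel computations. One also needs the uniform endpoint estimates
\begin{align*}
\lim_{\delta\downarrow 0}\sup_{\varepsilon\in(0,\varepsilon_0)}P\Bigl(\sup_{u\in[0,\delta]}\widetilde B^{(\varepsilon)}(u)\geq\eta\Bigr)=0
\end{align*}
and its analogue at $t=1$, which follow from explicit one-time marginal densities. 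The main technical obstacle is precisely this uniform endpoint control near $t=0$: the scaled factor $q_t(\varepsilon,y)/\varepsilon$ has a $t^{-1/2}$-type singularity as $t\downarrow 0$, so one must show that the marginal density of $\widetilde B^{(\varepsilon)}(u)$ is dominated, up to uniform constants, by the marginal density of $r^{0\to b}(u)$; once this domination is in place, the almost-sure continuity at $0$ of $r^{0\to b}$ transfers to equicontinuity of the approximating family. Combining Steps (I) and (II) with Prohorov's theorem then yields the asserted weak convergence.
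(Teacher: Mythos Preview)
Your overall strategy matches the paper's: finite-dimensional convergence via explicit reflection-principle densities and a limit, tightness via Kolmogorov--Chentsov moment bounds on compact subintervals of $(0,1)$, plus separate endpoint control. The translation to $\widetilde B^{(\varepsilon)}$ is harmless (indeed $\widetilde B^{(\varepsilon)}$ is exactly a BES(3)-bridge from $\varepsilon$ to $b+\varepsilon$, cf.\ Lemma~\ref{Ap_Lem_sameDist_BESbridge_BMbridge}), and your treatment of Step~(I) and of the interior part of Step~(II) is essentially what the paper does.

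The gap is in the endpoint mechanism. Domination of the one-time marginal density of $\widetilde B^{(\varepsilon)}(u)$ by a constant multiple of that of $r^{0\to b}(u)$ does \emph{not} control $\sup_{u\in[0,\delta]}\widetilde B^{(\varepsilon)}(u)$: the supremum is a path functional, and processes with dominated one-time marginals can have arbitrarily different oscillation behaviour. Nor does almost-sure continuity of the limit $r^{0\to b}$ at $0$ transfer to the approximating family without absolute continuity on path space, which fails here since the processes start at different points. What the paper actually does is establish increment moment bounds of the form
\[
E\bigl[\,\bigl|B^{0\to b}|_{K^+(-\varepsilon)}(t)-B^{0\to b}|_{K^+(-\varepsilon)}(s)\bigr|^{2m}\,\bigr]\leq \frac{C_m}{s\sqrt{1-t}}\,|t-s|^m
\]
valid on all of $(0,1)$ with the endpoint blow-up made explicit (Lemma~\ref{Lem_moment_ineq_cond_Brownianbridge}), and then runs a dyadic chaining argument (Chebyshev at dyadic times $k/2^n$, summation over $k$, then Lemmas~\ref{Ap_Lem_unif_conti_at_t=0} and~\ref{Ap_Lem_unif_conti_at_t=1}) to convert these into the required uniform modulus-of-continuity estimate near $t=0$ and $t=1$. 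Your fourth-moment bound on $[\delta,1-\delta]$ is the right ingredient; you need to extend it to all of $(0,1)$ with quantified degeneracy and feed that into a chaining argument, rather than appeal to marginal density domination.
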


Here, we prepare Lemma~\ref{Ap_Lem_sameDist_BESbridge_BMbridge} 
which is used in the proof of 
Theorem~\ref{Th_conv_Brownianbridge_to_3dBesselbridge}.

\begin{lem}\label{Ap_Lem_sameDist_BESbridge_BMbridge}
Let $T>0$ and $a, b>0$. 
Then, for $0 < s < t < T$ and $x, y > 0 $, we have
\begin{align}
&P\left( B_{[0,T]}^{a \to b}|_{K^+(0)}(t) \in dy \right) 
= P\left( r_{[0,T]}^{a \to b}(t) \in dy \right)\nonumber \\
&\quad = \left( \frac{T}{2\pi t(T-t)} \right)^{\frac{1}{2}} 
\frac{\left( e^{- \frac {(y-a)^2} {2t}} - e^{- \frac {(y+a)^2} {2t}} \right) 
\left( e^{- \frac {(b-y)^2} {2(T-t)}} - e^{- \frac {(b+y)^2} {2(T-t)}} \right)}
{e^{- \frac {(b-a)^2} {2T}} - e^{- \frac {(b+a)^2} {2T}}}dy, 
\label{Lem_markov_condBES_Density}\\
&P\left( B_{[0,T]}^{a \to b}|_{K^+(0)}(t) \in dy
~|~B_{[0,T]}^{a \to b}|_{K^+(0)}(s) = x \right) 
= P\left( r_{[0,T]}^{a \to b}(t) \in dy~|~r_{[0,T]}^{a \to b}(s) = x \right) \nonumber \\
&\quad = \left( \frac {T-s} {2 \pi (t-s)(T-t)} \right)^{\frac {1} {2}} 
\frac{\left( e^{- \frac {(y-x)^2} {2(t-s)}} - e^{- \frac {(y+x)^2} {2(t-s)}} \right) 
\left( e^{- \frac {(b-y)^2} {2(T-t)}} - e^{- \frac {(b+y)^2} {2(T-t)}} \right)}
{e^{- \frac {(b-x)^2} {2(T-s)}} - e^{- \frac {(b+x)^2} {2(T-s)}}}dy.
\label{Lem_markov_condBES_TransDensity}
\end{align}
Therefore, the Markov processes $B_{[0,T]}^{a \to b}|_{K^+(0)}$ 
and $r_{[0,T]}^{a \to b}$ follow the same distribution.
\end{lem}
\begin{proof}
$P\left( r_{[0,T]}^{a \to b}(t) \in dy \right)$ and 
$P\left( r_{[0,T]}^{a \to b}(t) \in dy~|~r_{[0,T]}^{a \to b}(s) = x \right)$ 
are given in \cite{bib_RevuzYor} p.~463. 
In the following, 
$(\Omega, \mathcal{F}), W = \{ W(t) \}_{t \geq 0} , 
\left(P^a \right)_{a \in \mathbb{R}}$ 
denotes the one-dimensional Brownian family, 
and $P^0$ is written simply as $P$. 
Using the Markov property of $W$ and (\ref{BM_Density_No1_Wt_mt}), we have
\begin{align*}
&P \left( B_{[0,T]}^{a \rightarrow b}|_{K^{+}(0)}(t) \in dy \right) \\
&\quad = \frac{P^{y} \left( W(T-t) \in db,m_{T-t}(W) \geq 0 \right) P \left( W(t) \in dy-a, m_{t}(W) \geq -a \right)}{P \left( m_{T}(a+W) \geq 0, a+W(T) \in db \right)} \\
&\quad = \frac{\left( n_{T-t}(b-y) - n_{T-t}(b+y) \right) \left( n_{t}(y-a) - n_{t}(y+a) \right)}{n_{T}(b-a)-n_{T}(b+a)} dy.
\end{align*}
Hence, \eqref{Lem_markov_condBES_Density} holds. 
Proposition~\ref{Ap_Prop_cond_Markov_is_Markov} implies that 
$B_{[0,T]}^{a \to b}|_{K^+(0)}$ is a Markov process.
Therefore we obtain \eqref{Lem_markov_condBES_TransDensity}.
\end{proof}

\subsubsection{Proof of Theorem~\ref{Th_conv_Brownianbridge_to_3dBesselbridge}}

Take an arbitrary sequence $\{\varepsilon_n\}_n$ such that $\varepsilon_n \downarrow 0$ $(n\to \infty)$. 
Then, by Proposition~2.4 in \cite{bib_Wu},  
there exist Bessel bridges $r^{c\to d}$ $(c, d\geq 0)$ 
defined on a common probability space $(\Omega, \mathcal{F}, P)$ such that 
$\{r^{\varepsilon_n \to b+\varepsilon_n}\}_n$ converges to $r^{0 \to b}$, $P$-almost surely. 
On the other hand, 
using Lemma~\ref{Ap_Lem_sameDist_BESbridge_BMbridge}, we obtain
\begin{align*}
B^{0\to b}\vert_{K^+(-\varepsilon_n)}
\overset{\mathcal{D}}{=}
B^{\varepsilon_n \to b+\varepsilon_n}\vert_{K^+(0)}-\varepsilon_n 
\overset{\mathcal{D}}{=}
r^{\varepsilon_n \to b+\varepsilon_n}-\varepsilon_n \qquad (n\in \mathbb{N}).
\end{align*}
Therefore, we obtain our assertion.

\subsection{Distribution of the maximal value of the BES($3$)-bridge}
\label{section_maxdist_BES3_bridge}

As an application of Theorem~\ref{Th_conv_Brownianbridge_to_3dBesselbridge}, 
we derive the distribution of the maximal value of the BES($3$)-bridge 
$r^{0 \to b}$ $(b>0)$.

\begin{prop}\label{Prop_max_dist_of_BES3bridge}
For each $x > b>0$, we have
\begin{align*}
P\left( M(r^{0 \to b}) \leq x \right)
=\frac{J^{(x)}(1,b)}{2b n_1(b)}>0.
\end{align*}
\end{prop}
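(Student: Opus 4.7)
The plan is to combine Theorem~\ref{Th_conv_Brownianbridge_to_3dBesselbridge} with the two preceding lemmas via the Portmanteau theorem. First, I would rewrite the event of interest as $\{M(r^{0\to b}) \leq x\} = \{r^{0\to b} \in K^-(x)\}$, since $K^-(x)$ is precisely the set of paths bounded above by $x$. As used in the proof of Lemma~\ref{Lem_BES3bridge_BdryProb_is_zero}, ${\rm int}(K^-(x)) = \{w : \max_{0\leq u\leq 1} w(u) < x\}$, so $\partial K^-(x) = \{w : \max_{0\leq u\leq 1} w(u) = x\}$, and Lemma~\ref{Lem_BES3bridge_BdryProb_is_zero} then guarantees that $K^-(x)$ is a continuity set of the law of $r^{0\to b}$.

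Second, I would apply the Portmanteau theorem to the weak convergence $B^{0\to b}|_{K^+(-\varepsilon)} \xrightarrow{\mathcal{D}} r^{0\to b}$ from Theorem~\ref{Th_conv_Brownianbridge_to_3dBesselbridge}, obtaining
\begin{align*}
P\!\left(M(r^{0\to b}) \leq x\right)
= \lim_{\varepsilon \downarrow 0} P\!\left(B^{0\to b}|_{K^+(-\varepsilon)} \in K^-(x)\right)
= \lim_{\varepsilon \downarrow 0} P\!\left(M\!\left(B^{0\to b}|_{K^+(-\varepsilon)}\right) \leq x\right),
\end{align*}
and then invoke Lemma~\ref{lem_lim_max_dist_cond_bridge} to identify the right-hand side with $J^{(x)}_1(1,b)/(2bn_1(b))$. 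This yields the stated equality.

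It remains to show strict positivity. I would not attempt to fight the signed theta series $J^{(x)}_1(1,b)$ termwise (pairing $k$ with $-k$ produces terms of both signs), but instead argue probabilistically. Since $x>b$, the linear interpolation $\phi(t)=bt$ is a continuous path from $0$ to $b$ whose maximum equals $b<x$, so every path in a sufficiently small $d_\infty$-neighborhood of $\phi$ lies in $K^-(x)$. Using the strict positivity of the finite-dimensional density (\ref{def_BES3_bridge_density})--(\ref{def_BES3_bridge_Transdensity}) of $r^{0\to b}$ on $(0,\infty)$, together with continuity of its paths, such a neighborhood receives positive measure, giving $P(M(r^{0\to b})\leq x)>0$ and hence $J^{(x)}_1(1,b)>0$.

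I expect this positivity step to be the only mildly delicate point; the identification of the limit is a routine Portmanteau argument once Lemma~\ref{lem_lim_max_dist_cond_bridge} and Lemma~\ref{Lem_BES3bridge_BdryProb_is_zero} are available.
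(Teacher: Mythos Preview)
Your identification of the equality is correct and matches the paper's argument exactly: combine Theorem~\ref{Th_conv_Brownianbridge_to_3dBesselbridge}, the Portmanteau theorem, and Lemmas~\ref{lem_lim_max_dist_cond_bridge} and~\ref{Lem_BES3bridge_BdryProb_is_zero}.

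The positivity step, however, is where your proposal and the paper genuinely diverge, and your version has a gap. The inference ``strictly positive finite-dimensional densities on $(0,\infty)$ together with continuity of paths imply that a $d_\infty$-ball around $\phi(t)=bt$ has positive measure'' is not a valid deduction as stated. Positive marginal and transition densities control cylinder sets, not uniform tubes; passing from the former to the latter requires a support theorem for the process (of Stroock--Varadhan type), or an absolute-continuity argument against a process whose support is already known. Neither is available from the ingredients you cite, and the endpoint $r^{0\to b}(0)=0$ makes a direct tube argument near $t=0$ genuinely delicate. You correctly flagged this as the ``mildly delicate point'', but it is more than mild: without extra input the argument does not close.

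The paper's route is quite different and avoids support considerations entirely. It observes that
\[
f(z)=\sum_{k=-\infty}^{\infty}(b+2k(b+z))\exp\!\left(-\tfrac{(b+2k(b+z))^2}{2}\right)
\]
extends to a holomorphic function on a strip $\{x>0,\ |y|<b/2\}$, and that on $(0,\infty)$ one has $f(\eta)=2b\,n_1(b)\,P(M(r^{0\to b})\le b+\eta)$, which is non-negative and non-decreasing. If $f(\eta_0)=0$ for some $\eta_0>0$, then $f$ vanishes on the interval $(0,\eta_0]$, hence on the whole strip by the identity theorem; but $\lim_{\eta\to\infty}f(\eta)=b\,e^{-b^2/2}>0$, a contradiction. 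This complex-analytic trick is short, self-contained, and sidesteps any path-space support issue.
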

\begin{proof}
Using (\ref{BM_Density_No3_Wt_Wu_mu}) and 
(\ref{BM_Density_No5_Ws_Wt_Wu_mu}), we have
\begin{align*}
&P\left( M(B^{0 \to b}|_{K^+(-\varepsilon )}) \leq x \right)
= P\left(  M(B^{0 \to b}) \leq x ~|~-\varepsilon \leq m(B^{0 \to b}) \right) \\
&\quad = \frac{P\left(-\varepsilon \leq m(W) < M(W) \leq x , W(1) \in db \right)}
{P\left( -\varepsilon \leq m(W), W(1) \in db \right)}
=\dfrac{\psi_1(\varepsilon)}{\psi_2(\varepsilon)}
\end{align*}
for $\varepsilon>0$, where
\begin{align*}
&\psi_1(\varepsilon):=
\sum_{k=-\infty}^{\infty} \left( n_1( b+2k(x+\varepsilon ) ) 
- n_1( 2\varepsilon+b+2k(x+\varepsilon ) )\right),
\qquad
\psi_2(\varepsilon):=n_1(b) - n_1(b+2\varepsilon ).
\end{align*}
By simple calculations, we obtain
\begin{align*}
&\lim_{\varepsilon \downarrow 0}\psi_i(\varepsilon)=0, \quad i=1,2, \quad
\lim_{\varepsilon \downarrow 0} 
\frac{d}{d \varepsilon} \psi_2(\varepsilon) = 2b n_1(b), \quad
\lim_{\varepsilon \downarrow 0}
\frac{d}{d \varepsilon} \psi_1(\varepsilon) =J^{(x)}(1,b).
\end{align*}
By combining Theorem~\ref{Th_conv_Brownianbridge_to_3dBesselbridge}, 
the Portmanteau theorem, 
Lemma~\ref{Lem_Boundary_Prob_for_Meander_BES3bridge}, 
and L'H\^{o}pital's rule, we obtain
\begin{align}
P\left( M(r^{0 \to b}) \leq x \right)
=\lim_{\varepsilon \downarrow 0} 
P\left( M(B^{0 \to b}|_{K^+(-\varepsilon )}) \leq x \right) 
=\frac{J^{(x)}(1,b)}{2b n_1(b)}. \label{eq_Max_dist_cond_bridge}
\end{align}
Now, we define the domain $D$, and the function $f$ on $D$ as
\begin{align*}
D &= \left\{ z= x+ {\rm i}y \mid x \in (0, \infty), y \in (-b/2, b/2) \right\},\\
f(z) &= \sum_{k=-\infty}^{\infty} (b+2k(b+z)) 
\exp\left( -\frac{(b+2k(b+z))^2}{2} \right),\qquad z \in D,
\end{align*}
where ${\rm i}$ is the imaginary unit. 
Then, we have
\begin{align*}
P\left(M(r^{0 \to b}) \leq b+\eta \right) 
=\frac{J^{(b+\eta)}(1,b)}{2b n_1(b)}
= \frac{f(\eta)}{b \exp \left( -\frac{b^2}{2} \right)},\qquad \eta > 0,
\end{align*}
by \eqref{eq_Max_dist_cond_bridge}. Furthermore, we define
\begin{align*}
D_R &= \left\{ z= x+ {\rm i}y \mid x \in (0, R), y \in (-b/2, b/2) \right\}, 
\qquad R>0.
\end{align*}
For $R>0$, $z\in D_R$, and $k\in \mathbb{Z}$, we have
\begin{align*}
|b+2k(b+z)| \leq b + 2|k|(b+|x|+|y|) \leq b+ 2|k|(2b+R) 
\end{align*}
and
\begin{align*}
&\exp\left( -\frac{(b+2k(b+z))^2}{2} \right) 
=\exp\left( -\frac{1}{2}b^2-2k(b+x)b-2k^2(b+x)^2  +2k^2 y^2 \right) \\
&\quad \leq \exp\left( 2|k|(b+R)b-2k^2b^2  +\frac{1}{2}k^2 b^2 \right) 
= \exp\left( -\frac{3}{2}k^2b^2 +2|k|(b+R)b \right).
\end{align*}
Thus, we see that $f$ is a holomorphic function on $D$. 

For the sake of contradiction, assume that 
$f(\eta_0) = 0$ holds for some $\eta_0 > 0$. 
Then, because $f$ is a non-decreasing and non-negative function on $(0, \infty)$, 
$f(z)=0$, $z \in D$ holds by the identity theorem. However, this contradicts
\begin{align*}
\lim_{\eta \to \infty} f(\eta) 
= b \exp \left( -\frac{b^2}{2} \right) 
\lim_{\eta \to \infty} P(M(r^{0 \to b}) \leq b+\eta) 
= b \exp \left( -\frac{b^2}{2} \right) > 0.
\end{align*}
\end{proof}

\begin{rem}
More generally, in \cite {bib_PitmanYor1999} p.~8 (28), 
Proposition~\ref{Prop_max_dist_of_BES3bridge} 
has been shown by the expanded Gikhman--Kiefer formula 
for BES($\delta$)-bridges. 
To derive Proposition~\ref{Prop_max_dist_of_BES3bridge} from \cite {bib_PitmanYor1999} p.~8 (28), 
we require a proof that employs the Fourier expansion of the heat kernel, 
which is more complex than the proof of Proposition~\ref{Prop_max_dist_of_BES3bridge}.
The Fourier expansion technique for the heat kernel is also employed 
in the proof of Proposition~8.1 in \cite{bib_IshitaniRinYanashima}.
\end{rem}

\begin{cor}\label{cor_scaled_BES3bridge_max_dist}
For $0\leq s<t < \infty$, it holds that
\begin{align*}
P\left( r_{[s,t]}^{0 \to y} \in K_{[s,t]}^-(c) \right)
=\frac{(t-s)J^{(c)}(t-s,y)}{2y n_{t-s}(y)}>0,
\qquad 0<y<c .
\end{align*}
\end{cor}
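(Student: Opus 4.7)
The plan is to reduce this statement to Proposition~\ref{Prop_max_dist_of_BES3bridge} by a time-shift and Brownian scaling. First I would introduce the rescaled process
\begin{align*}
\widetilde{r}(\tau):=\frac{1}{\sqrt{t-s}}\, r_{[s,t]}^{0\to y}\bigl(s+\tau(t-s)\bigr),\qquad \tau\in[0,1],
\end{align*}
and argue (via the scaling property of BES($3$), or equivalently from the explicit transition density~(\ref{def_BES3_bridge_Transdensity}) after change of variables) that $\widetilde{r}\overset{\mathcal{D}}{=}r^{0\to y/\sqrt{t-s}}$ on $[0,1]$. The event $\{r_{[s,t]}^{0\to y}\in K_{[s,t]}^-(c)\}$ then becomes $\{M(\widetilde{r})\leq c/\sqrt{t-s}\}$, and since $0<y<c$ one has $y/\sqrt{t-s}<c/\sqrt{t-s}$, so Proposition~\ref{Prop_max_dist_of_BES3bridge} applies and gives
\begin{align*}
P\bigl(r_{[s,t]}^{0\to y}\in K_{[s,t]}^-(c)\bigr)
=\frac{J^{(c/\sqrt{t-s})}_1\!\bigl(1,y/\sqrt{t-s}\bigr)}{2(y/\sqrt{t-s})\,n_1\!\bigl(y/\sqrt{t-s}\bigr)}>0.
\end{align*}

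Next I would simplify the right-hand side by the elementary identity $n_1(x/\sqrt{r})=\sqrt{r}\,n_r(x)$. Applied term by term to the series defining $J^{(c/\sqrt{t-s})}_1(1,y/\sqrt{t-s})$, this yields
\begin{align*}
J^{(c/\sqrt{t-s})}_1\!\bigl(1,y/\sqrt{t-s}\bigr)
=\sum_{k=-\infty}^{\infty}\frac{2(y+2kc)}{\sqrt{t-s}}\,\sqrt{t-s}\,n_{t-s}(y+2kc)
=(t-s)\,J^{(c)}_1(t-s,y),
\end{align*}
while $n_1(y/\sqrt{t-s})=\sqrt{t-s}\,n_{t-s}(y)$. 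Substituting these into the previous display produces exactly $(t-s)J^{(c)}_1(t-s,y)/(2y\,n_{t-s}(y))$, which is the claimed expression; strict positivity is inherited from Proposition~\ref{Prop_max_dist_of_BES3bridge}.

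There is no real obstacle here: the entire argument is a routine Brownian-scaling bookkeeping calculation. The only point that requires a brief justification is the distributional identity $\widetilde{r}\overset{\mathcal{D}}{=}r^{0\to y/\sqrt{t-s}}$, but this follows immediately either from the scaling invariance of three-dimensional Bessel processes combined with conditioning on the endpoint, or directly from comparing the transition densities on both sides via~(\ref{def_BES3_bridge_density}) and (\ref{def_BES3_bridge_Transdensity}).
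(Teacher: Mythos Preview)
Your proof is correct and follows essentially the same route as the paper: both reduce to Proposition~\ref{Prop_max_dist_of_BES3bridge} via Brownian scaling and then simplify using $n_1(x/\sqrt{r})=\sqrt{r}\,n_r(x)$. The paper presents this as a single chain of equalities, while you spell out the scaling identity $\widetilde{r}\overset{\mathcal{D}}{=}r^{0\to y/\sqrt{t-s}}$ and the term-by-term series computation explicitly, but the substance is identical.
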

\begin{proof}
Using Proposition~\ref{Prop_max_dist_of_BES3bridge}, we obtain
\begin{align*}
&P\left( r_{[s,t]}^{0 \to y} \in K_{[s,t]}^-(c) \right)
=P\left( M(r^{0 \to y/\sqrt{t-s}}) \leq c/\sqrt{t-s} \right) \\
&\quad =\frac{\sqrt{t-s}\cdot J^{(c/\sqrt{t-s})}(1,y/\sqrt{t-s})}
{2yn_1(y/\sqrt{t-s})}
=\frac{(t-s)J^{(c)}(t-s,y)}{2yn_{t-s}(y)}.
\end{align*}
\end{proof}

\begin{cor}\label{Cor_Prob_BES3bridge_under_single_curve}
Assume that $g \in C([0,1], \mathbb{R})$ satisfies 
$\min_{0 \leq t \leq 1} g(t) > 0$. 
Then, we have
\begin{align*}
(A)\quad P(W^+ \in K^-(g) ) > 0 \qquad \mbox{and}\qquad 
(B)\quad P(r^{0 \to b} \in K^-(g)) > 0, \quad 0<b<g(1).
\end{align*}
\end{cor}

\begin{rem}
Corollary~\ref{Cor_Prob_BES3bridge_under_single_curve} implies that 
we can define the following random variables:
\begin{align}\label{restricted_meander_and_bridge}
(A)\quad W^+\vert_{K^-(g)} \qquad \mbox{and}\qquad 
(B)\quad r^{0 \to b}\vert_{K^-(g)}, \quad 0<b<g(1).
\end{align}
Note that the conditioned random variables 
in Lemmas~\ref{Lem_for_H_gm_to_gp} and \ref{Lem_WeakConv_Meander_btwn_2crvs} 
are essentially the same as the conditioned random variables in \eqref{restricted_meander_and_bridge}.
\end{rem}

Here, we prepare some results which are used in the proof of  
Corollary~\ref{Cor_Prob_BES3bridge_under_single_curve}.

\begin{lem}\label{Ap_Lem_Joint_Dist_BES_bridge_1}
Let $\delta>0$, $a \geq 0$, and $b>0$. Then, for the BES($\delta$)-bridge 
$r^{a \to b} = \{r^{a \to b}(t) \}_{t \in [0,1]}$ 
from $a$ to $b$ on $[0, 1]$, we have
\begin{align*}
P\left( r^{a \to b}(t) \in dy, M(r^{a \to b}) \leq x \right)
= P\left( r_{[0,t]}^{a \to y} \in K_{[0,t]}^-(x) \right) 
P\left( r_{[t,1]}^{y \to b} \in K_{[t,1]}^-(x) \right) 
P\left( r^{a \to b}(t) \in dy \right)
\end{align*}
for $0<t<1$ and $b \leq x,\ 0 \leq y \leq x$. 
Here, $r_{[t_1,t_2]}^{c \to d} = \{r_{[t_1,t_2]}^{c \to d}(t) \}_{t \in [t_1,t_2]}$ 
denotes the BES($\delta$)-bridge from $c$ to $d$ on $[t_1,t_2]$.
\end{lem}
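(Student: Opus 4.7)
The plan is to exploit the decomposition of the BES($\delta$)-bridge at an intermediate time $t$. The crucial two ingredients are: first, the Markov property of $r^{a\to b}$, and second, the classical bridge-splitting fact that, conditional on $r^{a\to b}(t)=y$, the two restricted pieces $\pi_{[0,t]}\circ r^{a\to b}$ and $\pi_{[t,1]}\circ r^{a\to b}$ are independent and distributed, respectively, as the BES($\delta$)-bridges $r_{[0,t]}^{a\to y}$ and $r_{[t,1]}^{y\to b}$. Both facts follow from the definition of the BES($\delta$)-bridge as the $h$-transform of a BES($\delta$)-process pinned at time $1$, so I intend to invoke them as standard background rather than re-derive them.

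The first step is the set-theoretic identity
\begin{align*}
\{M(r^{a\to b})\leq x\}
= \{M_{[0,t]}(r^{a\to b})\leq x\}\cap\{M_{[t,1]}(r^{a\to b})\leq x\},
\end{align*}
which is valid because the global maximum on $[0,1]$ coincides with the larger of the maxima on $[0,t]$ and $[t,1]$. Intersecting with the event $\{r^{a\to b}(t)\in dy\}$ and using the splitting property above, I would write
\begin{align*}
P(r^{a\to b}(t)\in dy,\ M(r^{a\to b})\leq x)
&= E\bigl[\mathbf{1}_{\{M_{[0,t]}(r^{a\to b})\leq x\}}
\mathbf{1}_{\{M_{[t,1]}(r^{a\to b})\leq x\}}\ ;\ r^{a\to b}(t)\in dy\bigr]\\
&= P\bigl(M_{[0,t]}(r^{a\to b})\leq x,\ M_{[t,1]}(r^{a\to b})\leq x
\ \big|\ r^{a\to b}(t)=y\bigr)\, P(r^{a\to b}(t)\in dy).
\end{align*}

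The second step is to use conditional independence together with the identification of the conditional laws. Conditionally on $r^{a\to b}(t)=y$, the events $\{M_{[0,t]}(r^{a\to b})\leq x\}$ and $\{M_{[t,1]}(r^{a\to b})\leq x\}$ are measurable with respect to the independent pre-$t$ and post-$t$ $\sigma$-algebras, and the conditional distributions of the two pieces are those of $r_{[0,t]}^{a\to y}$ and $r_{[t,1]}^{y\to b}$. Hence the conditional probability factorizes as
\begin{align*}
P\bigl(r_{[0,t]}^{a\to y}\in K_{[0,t]}^-(x)\bigr)\,
P\bigl(r_{[t,1]}^{y\to b}\in K_{[t,1]}^-(x)\bigr),
\end{align*}
which, multiplied by $P(r^{a\to b}(t)\in dy)$, is exactly the claimed formula.

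The step I expect to be the main obstacle is the rigorous justification of the two bridge-decomposition facts, particularly in the degenerate endpoint case $a=0$ where the $h$-transform is singular at $0$. One clean way around this is to handle the case $a>0$ first via the $h$-transform construction, then pass to $a\downarrow 0$ using continuity of the densities of $r^{a\to b}(t)$ and of $r_{[0,t]}^{a\to y}$ in $a$, together with the explicit transition densities already recorded for $\delta=3$ in \eqref{def_BES3_bridge_density}--\eqref{def_BES3_bridge_Transdensity}. Everything else is a routine application of conditioning; no delicate estimates are needed.
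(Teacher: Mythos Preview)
Your proposal is correct and follows essentially the same strategy as the paper: split the maximum event at time $t$ and use the bridge decomposition. The only difference is presentational. The paper does not invoke the bridge-splitting fact as a black box; instead it writes the bridge probabilities as ratios of BES($\delta$)-process probabilities under $P^{(\delta)}_a$ and applies the Markov property of the \emph{underlying process} $R$ directly:
\[
P\bigl(r^{a\to b}(t)\in dy,\ M(r^{a\to b})\leq x\bigr)
=\frac{P^{(\delta)}_a\bigl(R(t)\in dy,\ M_t(R)\leq x\bigr)\,P^{(\delta)}_y\bigl(R(1-t)\in db,\ M_{1-t}(R)\leq x\bigr)}{P^{(\delta)}_a(R(1)\in db)},
\]
and then recognizes each numerator factor as a bridge maximum probability times a density. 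This buys exactly the robustness you were worried about: since the BES($\delta$)-process starting from $a=0$ is perfectly well defined and Markov for every $\delta>0$, the argument goes through uniformly in $a\geq 0$ with no limiting procedure and no appeal to the $h$-transform at a singular endpoint. Your limiting argument for $a\downarrow 0$ is unnecessary if you adopt this formulation.
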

\begin{proof}
In this proof, the pair $(R=\{R(t)\}_{t\geq 0}, P^{(\delta)}_a)$ denotes 
a BES($\delta$)-process starting from $a \geq 0$: $P^{(\delta)}_a(R(0)=a)=1$. 
Then, by the Markov property of $R$, we have
\begin{align*}
&P\left( r^{a \to b}(t) \in dy, M(r^{a \to b}) \leq x \right)
= \frac{P^{(\delta)}_a \left( R(t) \in dy, M(R) \leq x, R(1) \in db \right)}
{P^{(\delta)}_a \left( R(1) \in db \right)} \\
&\quad = \frac{P^{(\delta)}_y \left( R(1-t) \in db, M_{1-t}(R) \leq x \right) 
P^{(\delta)}_a \left( R(t) \in dy, M_t(R) \leq x \right)}
{P^{(\delta)}_a \left( R(1) \in db \right)}
\end{align*}
and
\begin{align*}
P\left( r^{a \to b}(t) \in dy \right)
= \frac{P^{(\delta)}_a \left( R(t) \in dy, R(1) \in db \right)}
{P^{(\delta)}_a \left( R(1) \in db \right)} 
= \frac{P^{(\delta)}_y \left( R(1-t) \in db \right) 
P^{(\delta)}_a \left( R(t) \in dy \right)}
{P^{(\delta)}_a \left( R(1) \in db \right)} .
\end{align*}
Therefore, because we have
\begin{align*}
P^{(\delta)}_y \left( R(1-t) \in db, M_{1-t}(R) \leq x \right)
& = P\left( M_{1-t}(r^{y \to b}) \leq x \right) 
P^{(\delta)}_y \left( R(1-t) \in db \right)\\ 
& = P\left( r_{[t,1]}^{y \to b} \in K_{[t,1]}^-(x) \right) 
P^{(\delta)}_y \left( R(1-t) \in db \right), \\
P^{(\delta)}_a \left( R(t) \in dy, M_t(R) \leq x \right)
&= P\left( M_t(r^{a \to y}) \leq x \right) 
P^{(\delta)}_a \left( R(t) \in dy \right)\\
&= P\left( r_{[0,t]}^{a \to y} \in K_{[0,t]}^-(x) \right) 
P^{(\delta)}_a \left( R(t) \in dy \right) ,
\end{align*}
it follows that
\begin{align*}
&P\left( r^{a \to b}(t) \in dy, M(r^{a \to b}) \leq x \right)\\
&\quad = P\left( r_{[0,t]}^{a \to y} \in K_{[0,t]}^-(x) \right) 
P\left( r_{[t,1]}^{y \to b} \in K_{[t,1]}^-(x) \right) 
\frac{P^{(\delta)}_a \left( R(t) \in dy \right) 
P^{(\delta)}_y \left( R(1-t) \in db \right)}
{P^{(\delta)}_a \left( R(1) \in db \right)} \\
&\quad = P\left( r_{[0,t]}^{a \to y} \in K_{[0,t]}^-(x) \right) 
P\left( r_{[t,1]}^{y \to b} \in K_{[t,1]}^-(x) \right) 
P\left( r^{a \to b}(t) \in dy \right).
\end{align*}
\end{proof}

\begin{lem}\label{Ap_Lem_Brwbridge_RectProb_Positive}
For $c>0$ and $a, b\in (0,c)$, it holds that $P(B^{a \to b} \in K(0, c))>0$.
\end{lem}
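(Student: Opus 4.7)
The plan is to reduce the claim to a tube-probability estimate for the standard Brownian bridge, and then invoke the strict positivity of a Dirichlet-type heat kernel. Since $0<a,b<c$, the affine interpolant $f(t):=a+(b-a)t$ satisfies $f(0)=a$, $f(1)=b$, and $f(t)\in[\min(a,b),\max(a,b)]\subset (0,c)$ for every $t\in[0,1]$. Setting $\delta:=\tfrac{1}{2}\min\{\min(a,b),\,c-\max(a,b)\}>0$, any continuous path $w$ with $\sup_{t\in[0,1]}|w(t)-f(t)|<\delta$ automatically lies in $K(0,c)$.

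Next I would use the standard identity in law $B^{a\to b}(t)\stackrel{\mathcal{D}}{=}f(t)+B^{0\to 0}(t)$, where $B^{0\to 0}$ is the standard Brownian bridge from $0$ to $0$ on $[0,1]$. The desired claim then reduces to
\begin{align*}
P\Bigl(\sup_{t\in[0,1]}|B^{0\to 0}(t)|<\delta\Bigr)>0.
\end{align*}

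For this tube estimate I would use the explicit formula (\ref{BM_Density_No2_Wt_mt_Mt}) with $\varepsilon=\eta=\delta$ and $z=0$, then divide by $P(W(1)\in d0)=n_1(0)\,d0$, to obtain
\begin{align*}
P\Bigl(\sup_{t\in[0,1]}|B^{0\to 0}(t)|<\delta\Bigr)
= \frac{1}{n_1(0)}\sum_{k\in\mathbb{Z}}\bigl[n_1(4k\delta)-n_1(2\delta+4k\delta)\bigr].
\end{align*}
The right-hand side is the Dirichlet heat kernel at time $1$ for Brownian motion on $(-\delta,\delta)$, evaluated at $(0,0)$, divided by $n_1(0)$. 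Its strict positivity can be read off from the equivalent trigonometric eigenfunction expansion obtained via Poisson summation (Jacobi theta identity),
\begin{align*}
\frac{1}{\delta}\sum_{m=0}^{\infty}\exp\!\left(-\frac{(2m+1)^2\pi^2}{8\delta^2}\right),
\end{align*}
every term of which is strictly positive. Combining, $P(B^{a\to b}\in K(0,c))\ge P(\sup_t|B^{0\to 0}(t)|<\delta)>0$.

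The only genuine subtlety is the Poisson-summation step needed to rewrite the reflection-principle series as a manifestly positive sum; this is classical but should be stated carefully. An alternative avoiding any series manipulation is to invoke directly the strict positivity of the Dirichlet heat kernel on a bounded interval (a standard consequence of the parabolic maximum principle/Harnack inequality) applied to $p^{(0,c)}_1(a,b)$, which equals (up to the factor $n_1(b-a)$) the very probability we wish to show is positive. I would adopt whichever of these routes is shortest to integrate with the appendix's existing notation.
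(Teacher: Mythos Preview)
Your proof is correct and takes a genuinely different route from the paper. The paper argues as follows: it writes the density
\[
f(\eta)=P\bigl(W\in K(-a,\,a\vee b+\eta-a),\ W(1)\in db-a\bigr)/db
\]
using the reflection series (\ref{BM_Density_No2_Wt_mt_Mt}), observes that $f$ extends to a holomorphic function on a strip, is non-negative and non-decreasing on $(0,\infty)$, and has a strictly positive limit as $\eta\to\infty$. If $f$ vanished at some $\eta_0>0$ it would vanish on an interval and hence, by the identity theorem, everywhere---a contradiction. This complex-analytic trick mirrors the argument used earlier in Proposition~\ref{Prop_max_dist_of_BES3bridge}.

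Your approach is more elementary and more constructive: shift by the affine interpolant to reduce to a tube probability for the standard bridge $B^{0\to0}$, and then establish strict positivity either via the eigenfunction (Jacobi theta) expansion or by invoking the standard positivity of the Dirichlet heat kernel on an interval. This avoids complex analysis entirely, gives an explicit lower bound, and is closer in spirit to support-theorem reasoning. The paper's route has the advantage of reusing machinery already developed in Section~\ref{section_maxdist_BES3_bridge} and of handling the full range $c>a\vee b$ in one stroke without splitting into a tube estimate; your route has the advantage of being self-contained and of generalizing more transparently to the curved-boundary setting of Lemma~\ref{Ap_Lem_Brwbridge_TubeProb_Positive} (indeed, the paper's proof of that lemma proceeds by a decomposition not unlike yours). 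Either way, the Poisson-summation identity you quote is exactly the equality between the image-series and eigenfunction-series forms of the Dirichlet kernel, so your positivity claim is on solid ground.
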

\begin{proof}
It holds that
\begin{align*}
f(z) = \sum_{k = - \infty}^{\infty} 
\left(n_1(b-a + 2k(a \vee b + z)) -n_1(b+a + 2k(a \vee b + z)) \right)
\end{align*}
defines a holomorphic function on 
\begin{align*}
D = \{ z= x+ {\rm i}y \mid x \in (0, \infty), y \in (-(b+a)/2, (b+a)/2) \}.
\end{align*} 
Lemma~\ref{Ap_BM_Density_Formulas_type1} and 
the following equations
\begin{align*}
&n_1(\xi)=n_1(-\xi), \quad
\sum_{k=-\infty}^{\infty} n_1(\xi+2k \tau)=\sum_{k=-\infty}^{\infty} n_1(\xi-2k \tau)
=\sum_{k=-\infty}^{\infty} n_1(\xi+2\tau+2k \tau), 
\end{align*}
imply that
\begin{align*}
f(\eta) = 
P(W \in K(-a, a \vee b + \eta -a), W(1) \in db-a) / db,\qquad \eta > 0.
\end{align*}
Observe that $f$ is non-decreasing and non-negative on $(0, \infty)$. 
Assume, for the sake of contradiction, 
that $f(\eta_0) = 0$ holds for some $\eta_0 > 0$. 
Then, it follows from the identity theorem that $f(z)=0$ holds for every $z \in D$. 
This contradicts 
\begin{align*}
&\lim_{\eta \to \infty} f(\eta) = P(W \in K^+(-a), W(1) \in db-a) / db \\
&\quad =P(B^{a\to b}\in K^+(0)) \frac{P(W(1) \in db-a)}{db}
=(1-e^{-2ab}) \frac{P(W(1) \in db-a)}{db}> 0.
\end{align*}
Here, it should be noted according to \cite{bib_KarazasShreve} that $P(B^{a\to b}\in K^+(0))=1-e^{-2ab}$ holds.
Therefore, $f(\eta) > 0$ holds for $\eta > 0$, and hence we obtain
\begin{align*}
P(B^{a \to b} \in K(0, c)) = \frac{f(c- a\vee b) db}{P(W(1) \in db-a)} > 0.
\end{align*}
\end{proof}

\begin{lem}\label{Ap_Lem_Brwbridge_TubeProb_Positive}
Let $a, b\in \mathbb{R}$. 
Assume that $\mathbb{R}$-valued continuous functions $g^-, g^+$ defined on $[0,1]$ 
satisfy the following conditions:
\begin{align*}
\min_{0 \leq t \leq 1} (g^+(t) - g^-(t)) > 0, \quad
g^-(0) < a < g^+(0), \quad g^-(1) < b < g^+(1).
\end{align*}
Then, we have
$P(B^{a \to b} \in K(g^-, g^+)) > 0$.
\end{lem}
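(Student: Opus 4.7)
The plan is to combine the Markov property of the Brownian bridge with Lemma~\ref{Ap_Lem_Brwbridge_RectProb_Positive}, reducing the curved tube $K(g^-,g^+)$ to a chain of short constant-level corridors on a fine partition of $[0,1]$.

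First I would fix $\delta := \min_{t\in[0,1]}(g^+(t)-g^-(t))>0$ and choose $\varepsilon\in(0,\delta/4)$ small enough so that $a\in(g^-(0)+\varepsilon,g^+(0)-\varepsilon)$ and $b\in(g^-(1)+\varepsilon,g^+(1)-\varepsilon)$. Using the uniform continuity of $g^\pm$, I would choose a partition $0=t_0<t_1<\cdots<t_N=1$ fine enough that the oscillation of each $g^\pm$ on any $[t_{i-1},t_i]$ is at most $\varepsilon/4$, and define
\begin{align*}
l_i := \max_{t\in[t_{i-1},t_i]} g^-(t)+\varepsilon/4, \qquad u_i := \min_{t\in[t_{i-1},t_i]} g^+(t)-\varepsilon/4,
\end{align*}
so that $[l_i,u_i]\subset(g^-(t),g^+(t))$ for every $t\in[t_{i-1},t_i]$ and $u_i-l_i\geq \delta/2$. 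I would then pick interior values $a=y_0,y_1,\ldots,y_{N-1},y_N=b$ with $y_i\in(l_i\vee l_{i+1},u_i\wedge u_{i+1})$; such $y_i$ exist because the $\varepsilon/4$-oscillation control forces consecutive corridors to overlap in an interval of length at least $\delta/4$.

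Next, by the Markov property of $B^{a\to b}$, for any $\eta>0$ small enough that $\prod_{i=1}^{N-1}(y_i-\eta,y_i+\eta)\subset\prod_{i=1}^{N-1}(l_i\vee l_{i+1},u_i\wedge u_{i+1})$,
\begin{align*}
P(B^{a\to b}\in K(g^-,g^+))
\geq \int_{\prod_i(y_i-\eta,y_i+\eta)}
\prod_{i=1}^N P\bigl(B_{[t_{i-1},t_i]}^{z_{i-1}\to z_i}\in K_{[t_{i-1},t_i]}(l_i,u_i)\bigr)\,f(z_1,\ldots,z_{N-1})\,dz_1\cdots dz_{N-1},
\end{align*}
where $z_0:=a,z_N:=b$ and $f$ is the (multivariate Gaussian, hence strictly positive) joint density of $(B^{a\to b}(t_1),\ldots,B^{a\to b}(t_{N-1}))$. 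By time-rescaling and translation, each factor rewrites as $P\bigl(B^{(z_{i-1}-l_i)/\sqrt{t_i-t_{i-1}}\to(z_i-l_i)/\sqrt{t_i-t_{i-1}}}\in K(0,(u_i-l_i)/\sqrt{t_i-t_{i-1}})\bigr)$, and the constraints $l_i<z_{i-1},z_i<u_i$ on the integration domain place us in the setting of Lemma~\ref{Ap_Lem_Brwbridge_RectProb_Positive}, yielding strict positivity of each factor. Since $f>0$ everywhere, the integral is strictly positive.

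The only mildly delicate step is the first one: ensuring that the $y_i$ lie simultaneously in the two adjacent rectangular corridors $[l_i,u_i]$ and $[l_{i+1},u_{i+1}]$. This is handled purely by making the partition fine enough that consecutive corridors overlap in an interval of width at least $\delta/4$, which is guaranteed by the uniform positivity of $g^+-g^-$ combined with uniform continuity. I anticipate no substantive obstacle beyond this bookkeeping.
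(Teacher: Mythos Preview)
Your argument is correct. It is genuinely different from the paper's proof, though. The paper first applies Girsanov's theorem with drift $(g^-)'$ to flatten the lower boundary to the constant $0$, reducing to a tube $K(0,g^+-g^-)$; it then uses only a three-piece partition $0<t_1<t_2<1$ together with Lemma~\ref{Ap_Lem_Brwbridge_RectProb_Positive} on each piece. Your approach bypasses Girsanov entirely and instead takes a fine partition so that on each subinterval the curved tube contains a constant corridor $[l_i,u_i]$, then invokes the bridge decomposition and Lemma~\ref{Ap_Lem_Brwbridge_RectProb_Positive} on each piece. Your route is more elementary (no change of measure) and in fact uses nothing about $g^\pm$ beyond uniform continuity, so it proves the lemma under weaker regularity than $C^1$; the price is the slightly heavier combinatorics of ensuring the consecutive corridors overlap, which you handled correctly. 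The paper's route is shorter once Girsanov is available, but as written its Girsanov density $\widetilde{Z}$ involves $g''$, so it implicitly needs $C^2$ smoothness to match the formula used elsewhere.
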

\begin{proof}
Note that we can find $n\in \mathbb{N}$, 
$(c_i^{\pm})_{i=1}^n \subset \mathbb{R}$, 
$(t_i)_{i=0}^n \subset [0,1]$ such that
\begin{align*}
&0=t_0<t_1<\cdots <t_n=1,\quad c_1^-<a<c_1^+, \ c_n^-<b<c_n^+,\\
&\max_{ t\in [t_{i-1},t_i]}g^-(t)<c_i^-<c_i^+<\min_{ t\in [t_{i-1},t_i]}g^+(t), \quad (1 \leq i \leq n),\\
&(c_i^-,c_i^+)\cap (c_{i+1}^-,c_{i+1}^+)\neq \emptyset, \quad (1\leq i \leq n-1).
\end{align*}
For $1\leq i \leq n-1$, we can find $e^-_i$ and $e^+_i$ such that
\begin{align*}
c_i^-\vee c_{i+1}^- < e^-_i < e^+_i<c_i^+\wedge c_{i+1}^+.
\end{align*}
Then, we have
\begin{align*}
P(B^{a \to b} \in K(g^-, g^+)) 
&\geq \int_{e^-_1}^{e^+_1}  \int_{e^-_2}^{e^+_2} \cdots \int_{e^-_{n-1}}^{e^+_{n-1}} h(y_1,y_2, \cdots, y_{n-1})\\
&\qquad \times
\frac{n_{t_1}(y_1-a) n_{t_2-t_1}(y_2-y_1)\cdots n_{1-t_{n-1}}(b-y_{n-1})}{n_1(a,b)}dy_1 dy_2\cdots dy_{n-1},
\end{align*}
where
\begin{align*}
h(y_1,y_2, \cdots, y_{n-1})=&P\left(B_{[0,t_1]}^{a \to y_1} \in K_{[0,t_1]}(c_1^-, c_1^+)\right) 
P\left(B_{[t_1,t_2]}^{y_1 \to y_2} \in K_{[t_1,t_2]}(c_2^-, c_2^+)\right)\\
&\times \cdots \times P\left(B_{[t_{n-1},1]}^{y_{n-1} \to b} \in K_{[t_{n-1},1]}(c_n^-, c_n^+)\right) .
\end{align*}
By Lemma~\ref{Ap_Lem_Brwbridge_RectProb_Positive}, 
it holds that 
\begin{align*}
h(y_1,y_2, \cdots, y_{n-1})>0 \quad \mbox{on} \quad (y_1,y_2, \cdots , y_{n-1})\in [e^-_1, e^+_1]\times [e^-_2, e^+_2]\times \cdots \times [e^-_{n-1}, e^+_{n-1}]. 
\end{align*}
Therefore, we obtain our assertion. 
\end{proof}

\subsubsection{Proof of Corollary~\ref{Cor_Prob_BES3bridge_under_single_curve}}

Let $c=\min_{0 \leq u \leq 1} g(u)>0$. 
Using Theorem~6.1 in \cite{bib_Durret_1977} and the identity theorem, 
we have
\begin{align*}
P(W^+ \in K^-(g))&\geq P(W^+ \in K^-(c))>0, 
\end{align*}
and obtain $(A)$.
Let $b\in (0, g(1))$. 
Take $\delta \in (0, 1)$ such that 
\begin{align*}
c_1:=\min_{t\in [1-\delta, 1]}g(t) \geq \frac{1}{2} (g(1) + b)
\end{align*} 
holds. Let $t_0 = 1-\delta$ and $c_0=b\wedge \min_{0 \leq u \leq 1} g(u)>0$. 
Then, Lemmas~\ref{Ap_Lem_sameDist_BESbridge_BMbridge} and
\ref{Ap_Lem_Joint_Dist_BES_bridge_1} imply
\begin{align*}
P(r^{0 \to b} \in K^-(g)) &\geq 
P\left(r^{0 \to b} \in \pi_{[0,t_0]}^{-1}(K_{[0,t_0]}^-(c_0))
\cap \pi_{[t_0,1]}^{-1} (K_{[t_0,1]}^-(c_1))\right)
=\int_0^{c_0} \kappa_{t_0}(y) P\left( r^{0 \rightarrow b}(t_0) \in dy \right), 
\end{align*}
where
\begin{align*}
\kappa_{t_0}(y)=P\left(r_{[0,t_0]}^{0 \to y} \in K_{[0,t_0]}^-(c_0)\right)
\frac{P(B_{[t_0,1]}^{y \rightarrow b} \in K_{[t_0,1]}(0,c_1)) }
{P(B_{[t_0,1]}^{y \rightarrow b} \in K_{[t_0,1]}^+(0))}.
\end{align*}
Using Proposition~\ref{Prop_max_dist_of_BES3bridge} and 
Lemma~\ref{Ap_Lem_Brwbridge_TubeProb_Positive}, 
we have $\kappa_{t_0}(y)>0$ on $y \in (0, c_0)$, and obtain $(B)$. 

\subsection{Preparation for the proof of Corollary~\ref{Cor_Decomp_flat_Moving}}

Lemma~\ref{Ap_Lem_BM_Density_Diff_Formulae_2} is used to prove Corollary~\ref{Cor_Decomp_flat_Moving}.

\begin{lem}\label{Ap_Lem_BM_Density_Diff_Formulae_2}
Let $W = \{ W(t) \}_{t \geq 0}$ 
be the standard one-dimensional Brownian motion 
defined on $(\Omega, \mathcal{F}, P)$.  For $t, \varepsilon, \eta>0$ 
and $-\varepsilon \leq z \leq \eta$, we define
\begin{align*}
\psi_t(\varepsilon, \eta, z):=
P\left(W(t) \in dz, -\varepsilon \leq m_t(W) < M_t(W) \leq \eta \right) /dz.
\end{align*}
Then, we have
\begin{align}
&\lim_{\varepsilon \downarrow 0}
\frac{\partial}{\partial \varepsilon} 
\psi_t(\varepsilon, \eta+\varepsilon, \eta)=0, \quad
\lim_{\varepsilon \downarrow 0}
\frac{\partial^2}{\partial \varepsilon^2} 
\psi_t(\varepsilon, \eta+\varepsilon, \eta)
=2\overline{J}^{(\eta)}(t,\eta), \qquad \eta > 0,
\label{F4_delep_delep_psi} 
\end{align}
where $\overline{J}^{(\eta)}$ is defined in \eqref{Def_of_functions_J}.
\end{lem}
\begin{proof}
By (\ref{BM_Density_No2_Wt_mt_Mt}), 
derivatives of $\psi_t(\varepsilon, \eta+\varepsilon, \eta)$ satisfy
\begin{align*}
\frac{\partial}{\partial \varepsilon} \psi_t(\varepsilon, \eta+\varepsilon, \eta)
&= \sum_{k=-\infty}^{\infty} 
\left( 4k n_t'(\eta+2k(\eta +2\varepsilon )) 
- 2(2k+1)n_t'((2k+1)(\eta +2\varepsilon )) \right)  \\
&\to 
-2\sum_{k=-\infty}^{\infty} n_t'((2k+1)\eta)=0, 
\qquad \varepsilon \downarrow 0 ,
\end{align*}
and
\begin{align*}
\frac{\partial^2}{\partial \varepsilon^2} 
\psi_t(\varepsilon, \eta+\varepsilon, \eta)
&= \sum_{k=-\infty}^{\infty} 
\left( (4k)^2 n_t''(\eta+2k(\eta +2\varepsilon )) 
- 2^2(2k+1)^2n_t''((2k+1)(\eta +2\varepsilon )) \right) \\
&\to \sum_{k=-\infty}^{\infty} 
\left( (4k)^2 n_t''((2k+1)\eta) 
- 2^2(2k+1)^2n_t''((2k+1)\eta) \right)=:\widehat{\Phi},
\qquad \varepsilon \downarrow 0.
\end{align*}
Here, by $n_t''(z)=-n_t(z)/t+(z/t)^2n_t(z)$, it holds that
\begin{align*}
\widehat{\Phi}=-4 \sum_{k=-\infty}^{\infty} (4k+1)n_t''((2k+1)\eta) 
=4 \sum_{k=-\infty}^{\infty} (4k+1)
\left\{ \frac{1}{t}-\frac{(\eta+2k\eta)^2}{t^2}\right\}n_t((2k+1)\eta).
\end{align*}
Because we have
\begin{align*}
\sum_{k=-\infty}^{\infty} (2k+1)
\left\{ \frac{1}{t}-\frac{((2k+1)\eta)^2}{t^2}\right\}n_t((2k+1)\eta)=0,
\end{align*}
it follows that
\begin{align*}
\widehat{\Phi}=4 \sum_{k=-\infty}^{\infty} 2k
\left\{ \frac{1}{t}-\frac{(\eta+2k\eta)^2}{t^2}\right\}n_t((2k+1)\eta)
=2\overline{J}^{(\eta)}(t,\eta).
\end{align*}
\end{proof}

\subsection{Girsanov's theorem}
\label{subsection_Girsanov_appendix}


\begin{lem}\label{Ap_Lem_Girsanov_all_type}
Let $0\leq t_1<t_2\leq 1$.
Assume that $h^-$ and $h^+$ are $\mathbb{R}$-valued $C^2$-functions defined on $[t_1, t_2]$ 
satisfying the following condition:
\begin{align*}
\min_{t\in [t_1,t_2]}(h^+(t)-h^-(t))>0.
\end{align*} 
Then, for every $\mathbb{R}$-valued bounded continuous function $F$ on 
$C([t_1, t_2], \mathbb{R})$ and $h^-(t_1) < a < h^+(t_1)$, we have
\begin{align}
&E[F(a+W_{[t_1,t_2]})\ ;\ a+W_{[t_1,t_2]} \in K_{[t_1,t_2]}(h^-, h^+)]\nonumber \\
&\quad = 
E\Big[F\big(h^-+a-h^-(t_1)+W_{[t_1,t_2]}\big)\widetilde{Z}_{[t_1,t_2]}^{h^-} \big(W_{[t_1,t_2]}\big)^{-1} \ ;\ \nonumber \\
&\qquad \qquad 
W_{[t_1,t_2]} \in K_{[t_1,t_2]}\big(h^-(t_1)-a, h^+-h^- +h^-(t_1)-a\big)\Big]
\label{Girsanov_free_formula}
\end{align}
and
\begin{align}
&E[F(a+W_{[t_1,t_2]})\ ;\ a+W_{[t_1,t_2]}(t_2) \in db, 
a+W_{[t_1,t_2]} \in K_{[t_1,t_2]}(h^-, h^+)] \nonumber \\
&\quad = 
E\Big[F\big(h^-+a-h^-(t_1)+B^{0\to b-a+h^-(t_1)-h^-(t_2)}_{[t_1,t_2]}\big)
\widetilde{Z}_{[t_1,t_2]}^{h^-} \big(B^{0\to b-a+h^-(t_1)-h^-(t_2)}_{[t_1,t_2]}\big)^{-1} \ ;\ \nonumber \\
&\qquad \qquad 
B^{0\to b-a+h^-(t_1)-h^-(t_2)}_{[t_1,t_2]} \in K_{[t_1,t_2]}\big(h^-(t_1)-a, h^+-h^- +h^-(t_1)-a\big)\Big]\nonumber \\
&\qquad \times P\big(a+h^-(t_2)-h^-(t_1)+W_{[t_1,t_2]}(t_2) \in db\big)
\label{Girsanov_pinned_formula} \\
&\quad = 
E\Big[F\Big(h^++b -h^+(t_2)- \overset{\leftarrow}{B}^{0\to b-a+h^+(t_1)-h^+(t_2)}_{[t_1,t_2]}\Big)
\widetilde{Z}_{[t_1,t_2]}^{h^+(t_2)-\overset{\leftarrow}{h}^+} \big(B^{0\to b-a+h^+(t_1)-h^+(t_2)}_{[t_1,t_2]}\big)^{-1} 
\ ;\ \nonumber \\ 
&\qquad \qquad 
B^{0\to b-a+h^+(t_1)-h^+(t_2)}_{[t_1,t_2]}
\in K_{[t_1,t_2]}\big(b-h^+(t_2), \overset{\leftarrow}{h}^+-\overset{\leftarrow}{h}^-+b-h^+(t_2)\big)\Big]
\nonumber \\
&\qquad \times P\big(a+h^+(t_2)-h^+(t_1)+W_{[t_1,t_2]}(t_2) \in db\big).
\label{Girsanov_pinned_inv_formula}
\end{align}
\end{lem}
\begin{proof}
First, we prove \eqref{Girsanov_pinned_formula}. 
We define the process $\widehat{W}_{[t_1,t_2]}=\{ \widehat{W}_{[t_1,t_2]}(t)\}_{t\in [t_1,t_2]}$ as
\begin{align*}
\widehat{W}_{[t_1,t_2]}(t)=W_{[t_1,t_2]}(t)-\int_{t_1}^t \frac{d}{ds}h^-(s)ds \quad (t\in [t_1, t_2]).
\end{align*}
Further, we define the probability measure $\widehat{P}$ on 
$(\Omega, \mathcal{F}_{t_2}^{W_{[t_1,t_2]}})$ as
\begin{align*}
\widehat{P}(A)=E\left[ Z_{[t_1,t_2]}^{h^-} (W_{[t_1,t_2]}) ; A\right] 
=E\left[ \widetilde{Z}_{[t_1,t_2]}^{h^-} (\widehat{W}_{[t_1,t_2]}) ; A\right]\quad (A\in \mathcal{F}). 
\end{align*}
It follows from Girsanov's theorem that 
$\widehat{W}_{[t_1,t_2]}$ is a one-dimensional Brownian motion 
on the time interval $[t_1, t_2]$ under the probability measure $\widehat{P}$. 
Therefore, we obtain
\begin{align}
&E[F(a+W_{[t_1,t_2]})\ ;\ a+W_{[t_1,t_2]}(t_2) \in db, 
a+W_{[t_1,t_2]} \in K_{[t_1,t_2]}(h^-, h^+)]\nonumber \\
&\quad = 
E\Big[F\big(h^--h^-(t_2)+c^-_a+W_{[t_1,t_2]}\big)\widetilde{Z}_{[t_1,t_2]}^{h^-} \big(W_{[t_1,t_2]}\big)^{-1} \ ;\ \nonumber \\
&\qquad \qquad c^-_a+W_{[t_1,t_2]}(t_2) \in db,
\ c^-_a+W_{[t_1,t_2]} \in K_{[t_1,t_2]}\big(h^-(t_2), h^+-h^- +h^-(t_2)\big)\Big]
\nonumber \\
&\quad = 
E\Big[F\big(h^--h^-(t_2)+B^{c^-_a\to b}_{[t_1,t_2]}\big)\widetilde{Z}_{[t_1,t_2]}^{h^-} \big(-c^-_a+B^{c^-_a\to b}_{[t_1,t_2]}\big)^{-1} \ ;\ \nonumber \\
&\qquad \qquad 
B^{c^-_a\to b}_{[t_1,t_2]} \in K_{[t_1,t_2]}\big(h^-(t_2), h^+-h^- +h^-(t_2)\big)\Big]
P\big(c^-_a+W_{[t_1,t_2]}(t_2) \in db\big),
\label{Girs_pinned_formula_proof_step1}
\end{align}
where $c^-_a=a+h^-(t_2)-h^-(t_1)$. 
Then, it follows from \eqref{Girs_pinned_formula_proof_step1} and 
$-c^-_a+B^{c^-_a\to b}_{[t_1,t_2]}(\cdot) 
\overset{\mathcal{D}}{=} B^{0\to b-c^-_a}_{[t_1,t_2]}(\cdot) $ 
that
\begin{align}
&E[F(a+W_{[t_1,t_2]})\ ;\ a+W_{[t_1,t_2]}(t_2) \in db, 
a+W_{[t_1,t_2]} \in K_{[t_1,t_2]}(h^-, h^+)]\nonumber \\
&\quad = 
E\Big[F\big(h^--h^-(t_2)+c^-_a-c^-_a+B^{c^-_a\to b}_{[t_1,t_2]}\big)\widetilde{Z}_{[t_1,t_2]}^{h^-} \big(-c^-_a+B^{c^-_a\to b}_{[t_1,t_2]}\big)^{-1} \ ;\ \nonumber \\
&\qquad \qquad 
c^-_a-c^-_a+B^{c^-_a\to b}_{[t_1,t_2]} \in K_{[t_1,t_2]}\big(h^-(t_2), h^+-h^- +h^-(t_2)\big)\Big]
P\big(c^-_a+W_{[t_1,t_2]}(t_2) \in db\big)
\nonumber \\
&\quad = 
E\Big[F\big(h^--h^-(t_2)+c^-_a+B^{0\to b-c^-_a}_{[t_1,t_2]}\big)\widetilde{Z}_{[t_1,t_2]}^{h^-} \big(B^{0\to b-c^-_a}_{[t_1,t_2]}\big)^{-1} \ ;\ \nonumber \\
&\qquad \qquad 
c^-_a+B^{0\to b-c^-_a}_{[t_1,t_2]} \in K_{[t_1,t_2]}\big(h^-(t_2), h^+-h^- +h^-(t_2)\big)\Big]
P\big(c^-_a+W_{[t_1,t_2]}(t_2) \in db\big).
\label{Girs_pinned_formula_proof_step2}
\end{align}
Thus, we obtain \eqref{Girsanov_pinned_formula} by \eqref{Girs_pinned_formula_proof_step2}. 

Second, we can obtain \eqref{Girsanov_free_formula} in a similar manner to the proof of \eqref{Girs_pinned_formula_proof_step1}. 

Finally, we prove \eqref{Girsanov_pinned_inv_formula}. Using
\begin{align*}
&(W_{[t_1,t_2]}(\cdot), W_{[t_1,t_2]}(t_2)) \\
&\overset{\mathcal{D}}{=} 
(W_{[t_1,t_2]}(t_2) - W_{[t_1,t_2]}(t_1+t_2-\cdot), W_{[t_1,t_2]}(t_2) - W_{[t_1,t_2]}(t_1+t_2-t_2))\\
&=(W_{[t_1,t_2]}(t_2) - \overset{\leftarrow}{W}_{[t_1,t_2]}(\cdot), W_{[t_1,t_2]}(t_2)), 
\end{align*} 
we obtain
\begin{align}
&E\Big[F(a+W_{[t_1,t_2]})1_{K_{[t_1,t_2]}(h^-, h^+)}(a+W_{[t_1,t_2]})
\ ;\ a+W_{[t_1,t_2]}(t_2) \in A \Big] \nonumber \\
&\quad =E\Big[F\big(a+W_{[t_1,t_2]}(t_2) - \overset{\leftarrow}{W}_{[t_1,t_2]}\big)
1_{K_{[t_1,t_2]}(h^-, h^+)}\big(a+W_{[t_1,t_2]}(t_2) - \overset{\leftarrow}{W}_{[t_1,t_2]}\big)
\ ;\ \nonumber \\
&\qquad \qquad \ a+W_{[t_1,t_2]}(t_2) \in A \Big] \nonumber \\
&\quad = 
\int_A E\Big[F\big(b - \overset{\leftarrow}{W}_{[t_1,t_2]}\big)
1_{K_{[t_1,t_2]}(h^-, h^+)}\big(b - \overset{\leftarrow}{W}_{[t_1,t_2]}\big)
\ ;\ a+W_{[t_1,t_2]}(t_2) \in db\Big] \nonumber \\
&\quad = 
\int_A E\Big[F\big(b - \overset{\leftarrow}{W}_{[t_1,t_2]}\big)
1_{K_{[t_1,t_2]}(b-\overset{\leftarrow}{h}^+, b-\overset{\leftarrow}{h}^-)}(W_{[t_1,t_2]})
\ ;\ a+W_{[t_1,t_2]}(t_2) \in db\Big] \label{pinned_inv_Girs_step1}
\end{align}
for $A\in \mathcal{B}(\mathbb{R})$. 
Let $\varepsilon^+_b = h^+(t_2)-b$. Then, it follows from \eqref{pinned_inv_Girs_step1} that
\begin{align}
&E[F(a+W_{[t_1,t_2]})\ ;\ a+W_{[t_1,t_2]}(t_2) \in db, 
a+W_{[t_1,t_2]} \in K_{[t_1,t_2]}(h^-, h^+)] \nonumber \\
&\quad =E\Big[F\big(b - \overset{\leftarrow}{W}_{[t_1,t_2]}\big)
\ ;\ a+W_{[t_1,t_2]}(t_2) \in db, 
W_{[t_1,t_2]}\in K_{[t_1,t_2]}\big(b-\overset{\leftarrow}{h}^+, b-\overset{\leftarrow}{h}^-\big)\Big]
\nonumber \\
&\quad =E\Big[F\big(b - \overset{\leftarrow}{W}_{[t_1,t_2]}\big)
\ ;\ a+W_{[t_1,t_2]}(t_2) \in db, \nonumber \\
&\qquad \qquad \qquad \qquad \qquad
\overset{\leftarrow}{h}^+-h^+(t_2)+W_{[t_1,t_2]}
\in K_{[t_1,t_2]}\big(-\varepsilon^+_b, b-\overset{\leftarrow}{h}^-+\overset{\leftarrow}{h}^+-h^+(t_2)\big)\Big]
\nonumber \\
&\quad =E\Big[F\big(b - \overset{\leftarrow}{W}_{[t_1,t_2]}\big)
\ ;\ c^+_a+\overset{\leftarrow}{h}^+(t_2)-\overset{\leftarrow}{h}^+(t_1)+W_{[t_1,t_2]}(t_2) \in db, 
\nonumber \\
&\qquad \qquad \qquad \qquad \qquad
\overset{\leftarrow}{h}^+-\overset{\leftarrow}{h}^+(t_1)+W_{[t_1,t_2]}
\in K_{[t_1,t_2]}\big(-\varepsilon^+_b, \overset{\leftarrow}{h}^+-\overset{\leftarrow}{h}^-+b-h^+(t_2)\big)\Big], 
\label{pinned_inv_Girs_step2}
\end{align}
where $c^+_a=a+h^+(t_2)-h^+(t_1)$. 
Combining \eqref{pinned_inv_Girs_step2}, Girsanov's theorem, and 
$-c^+_a+B^{c^+_a\to b}_{[t_1,t_2]}(\cdot) 
\overset{\mathcal{D}}{=} B^{0\to b-c^+_a}_{[t_1,t_2]}(\cdot) $, we obtain
\begin{align}
&E[F(a+W_{[t_1,t_2]})\ ;\ a+W_{[t_1,t_2]}(t_2) \in db, 
a+W_{[t_1,t_2]} \in K_{[t_1,t_2]}(h^-, h^+)] \nonumber \\
&\quad =E\Big[F\big(h^++b -h^+(t_2)- \overset{\leftarrow}{W}_{[t_1,t_2]}\big)
\widetilde{Z}_{[t_1,t_2]}^{h^+(t_2)-\overset{\leftarrow}{h}^+} \big(W_{[t_1,t_2]}\big)^{-1} 
\ ;\ \nonumber \\ 
&\qquad \qquad 
c^+_a+W_{[t_1,t_2]}(t_2) \in db, \ 
W_{[t_1,t_2]}\in K_{[t_1,t_2]}\big(-\varepsilon^+_b, \overset{\leftarrow}{h}^+-\overset{\leftarrow}{h}^-+b-h^+(t_2)\big)\Big]\nonumber \\
&\quad =E\Big[F\big(h^++b -h^+(t_2)- \overset{\leftarrow}{W}_{[t_1,t_2]}\big)
\widetilde{Z}_{[t_1,t_2]}^{h^+(t_2)-\overset{\leftarrow}{h}^+} \big(-c^+_a+c^+_a+W_{[t_1,t_2]}\big)^{-1} 
\ ;\ \nonumber \\ 
&\qquad \qquad 
c^+_a+W_{[t_1,t_2]}(t_2) \in db, \ 
-c^+_a+c^+_a+W_{[t_1,t_2]}\in K_{[t_1,t_2]}\big(-\varepsilon^+_b, \overset{\leftarrow}{h}^+-\overset{\leftarrow}{h}^-+b-h^+(t_2)\big)\Big]\nonumber \\
&\quad =E\Big[F\big(h^++b -h^+(t_2)- \overset{\leftarrow}{B}^{0\to b-c^+_a}_{[t_1,t_2]}\big)
\widetilde{Z}_{[t_1,t_2]}^{h^+(t_2)-\overset{\leftarrow}{h}^+} \big(B^{0\to b-c^+_a}_{[t_1,t_2]}\big)^{-1} 
\ ;\ \nonumber \\ 
&\qquad \qquad 
B^{0\to b-c^+_a}_{[t_1,t_2]}\in K_{[t_1,t_2]}\big(-\varepsilon^+_b, \overset{\leftarrow}{h}^+-\overset{\leftarrow}{h}^-+b-h^+(t_2)\big)\Big]
P(c^+_a+W_{[t_1,t_2]}(t_2) \in db).
\label{Girsanov_pinned_inv_modified_formula}
\end{align}
Therefore, we obtain \eqref{Girsanov_pinned_inv_formula} by \eqref{Girsanov_pinned_inv_modified_formula}.
\end{proof}

\subsection{Preparation for 
Sections~\ref{section_proof_main_Lemma}, 
\ref{section_proof_Moving_between_2curves}, 
\ref{section_proof_Meander_between_2curves}, and
\ref{section_proof_3dBesselbridge_between_2curves}}

The following lemma is obtained by Skorohod's theorem.
\begin{lem}\label{Ap_Lem_conv_restricted_Expectation}
Let $S$ be a Polish space. 
Let $X_n$ and $X$ be random variables defined on 
$(\Omega_n, \mathcal{F}_n, P_n)$ 
and $(\Omega, \mathcal{F}, P)$ that take values in $S$. 
Assume that $X_n \xrightarrow{\mathcal{D}} X$ holds and 
$A \in \mathcal{B}(S)$ satisfies $P(X \in \partial A)=0$. 
Then, for every $\mathbb{R}$-valued bounded continuous function $G$ on $S$, we have
\begin{align*}
\lim_{n \to \infty} E_n[G(X_n)\ ;\ X_n \in A] = E[G(X)\ ;\ X \in A], 
\end{align*}
where $E_n$ denotes the expectation operator under $P_n$.
\end{lem}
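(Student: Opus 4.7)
The plan is to reduce the problem to an almost-sure convergence statement via Skorohod's representation theorem and then apply the dominated convergence theorem. Since $S$ is a Polish space and $X_n \xrightarrow{\mathcal{D}} X$, there exists a probability space $(\tilde{\Omega}, \tilde{\mathcal{F}}, \tilde{P})$ carrying $S$-valued random variables $\tilde{X}_n$ $(n \geq 1)$ and $\tilde{X}$ with $\tilde{X}_n \overset{\mathcal{D}}{=} X_n$, $\tilde{X} \overset{\mathcal{D}}{=} X$, and $\tilde{X}_n \to \tilde{X}$ $\tilde{P}$-almost surely.

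Next I would verify that the bounded Borel function $\varphi(s) := G(s) \mathbf{1}_A(s)$ is continuous at every $s \notin \partial A$. Indeed, $S$ decomposes as the disjoint union $\operatorname{int}(A) \cup \partial A \cup \operatorname{int}(S \setminus A)$; on $\operatorname{int}(A)$ the function $\varphi$ locally coincides with the continuous function $G$, while on $\operatorname{int}(S \setminus A)$ it is locally zero, so discontinuities of $\varphi$ can occur only on $\partial A$. Since $\tilde{P}(\tilde{X} \in \partial A) = P(X \in \partial A) = 0$, the continuity of $\varphi$ off $\partial A$ combined with $\tilde{X}_n \to \tilde{X}$ a.s. yields $\varphi(\tilde{X}_n) \to \varphi(\tilde{X})$ $\tilde{P}$-a.s.

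Finally, because $|\varphi(s)| \leq \sup_{s' \in S}|G(s')| < \infty$, the dominated convergence theorem gives $\tilde{E}[\varphi(\tilde{X}_n)] \to \tilde{E}[\varphi(\tilde{X})]$. Identifying these expectations with $E_n[G(X_n)\,;\,X_n \in A]$ and $E[G(X)\,;\,X \in A]$ via the distributional identities $\tilde{X}_n \overset{\mathcal{D}}{=} X_n$ and $\tilde{X} \overset{\mathcal{D}}{=} X$ completes the argument. There is no genuine obstacle here; the only subtlety is the elementary topological observation about continuity of $\varphi$ off $\partial A$. An equivalent one-line route would be to invoke the Portmanteau theorem in its form for bounded Borel functions that are continuous almost everywhere with respect to the limit law, but the Skorohod argument has the advantage of being self-contained and matches the style of representation-based arguments used elsewhere in the paper.
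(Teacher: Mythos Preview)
Your proof is correct and shares the same backbone as the paper's: invoke Skorohod's representation theorem to replace weak convergence by almost-sure convergence, then apply the dominated convergence theorem.

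The execution differs in a small but pleasant way. The paper splits
\[
\bigl|E[G(X_n);X_n\in A]-E[G(X);X\in A]\bigr|
\le E[|G(X_n)-G(X)|;X_n\in A]+\sup|G|\cdot E[|1_{\{X_n\in A\}}-1_{\{X\in A\}}|],
\]
handling the first term by continuity of $G$ and the second by set-theoretic limsup/liminf inclusions together with reverse Fatou. You instead bundle everything into the single bounded Borel function $\varphi=G\cdot 1_A$, observe that its discontinuity set is contained in $\partial A$, and get $\varphi(\tilde X_n)\to\varphi(\tilde X)$ a.s.\ in one stroke. Your route is slightly more economical and is exactly the ``Portmanteau for bounded a.e.-continuous functions'' argument made explicit; the paper's two-term split is a little more hands-on but arrives at the same place. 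Neither approach has any real advantage in generality here.
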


Proposition~\ref{Prop_Boundary_purturb_WeakConv_for_Meander_BES3bridge} is used in 
Sections~\ref{section_proof_main_Lemma}, 
\ref{section_proof_Moving_between_2curves}, 
\ref{section_proof_Meander_between_2curves}, and
\ref{section_proof_3dBesselbridge_between_2curves}.

\begin{prop}\label{Prop_Boundary_purturb_WeakConv_for_Meander_BES3bridge}
Let $g$ be an $\mathbb{R}$-valued $C^1$-function defined on $[0,1]$ that satisfies $\min_{0 \leq t \leq 1} g(t) > 0$.  
Assume that $\{ \eta(\varepsilon) \}_{\varepsilon>0}$ satisfies
\begin{align*}
\eta (\varepsilon)\geq 0\quad (\varepsilon>0)
\quad \mbox{and} \quad \eta(\varepsilon) \downarrow 0\quad (\varepsilon \downarrow 0).
\end{align*}
Then, we have
\begin{align}
\lim_{\varepsilon \downarrow 0}P(W |_{K^+(-\varepsilon)}\in K^-(g+ \eta(\varepsilon)))&=P(W^+ \in K^-(g)), 
\label{Prop_Boundary_purturb_ProbConv_for_Meander}\\
\lim_{\varepsilon \downarrow 0}P(B^{0\to b} |_{K^+(-\varepsilon)}\in K^-(g+ \eta(\varepsilon)))&=P(r^{0\to b} \in K^-(g)) \quad (0\leq b<g(1)).
\label{Prop_Boundary_purturb_ProbConv_for_BES3bridge}
\end{align}
Further, for every $\mathbb{R}$-valued bounded continuous function $F$ on $C([0,1],\mathbb{R})$, we have
\begin{align}
\lim_{\varepsilon \downarrow 0}E[F(W |_{K(- \varepsilon, g + \eta(\varepsilon))})]&=E[F(W^+ |_{K^-(g)})], 
\label{Prop_Boundary_purturb_WeakConv_for_Meander}\\
\lim_{\varepsilon \downarrow 0}E[F(B^{0\to b} |_{K(- \varepsilon, g + \eta(\varepsilon))})]&=E[F(r^{0\to b} |_{K^-(g)})] \quad (0\leq b<g(1)).
\label{Prop_Boundary_purturb_WeakConv_for_BES3bridge}
\end{align}
\end{prop}
\begin{proof}
Combining 
Lemma~\ref{Lem_Boundary_Prob_for_Meander_BES3bridge} 
and the fact that $W |_{K^+(-\varepsilon)}$ converges weakly to $W^+$ (\cite{bib_Durret_1977}), we obtain 
\begin{align}
&P(W^+ \in K^-(g))
=\lim_{\varepsilon \downarrow 0}P(W |_{K^+(-\varepsilon)}\in K^-(g))
\leq \varliminf_{\varepsilon \downarrow 0}P(W |_{K^+(-\varepsilon)}\in K^-(g+ \eta(\varepsilon))), 
\label{ineq1_prob_liminf_meander}\\
&\varlimsup_{\varepsilon \downarrow 0}P(W |_{K^+(-\varepsilon)}\in K^-(g+ \eta(\varepsilon)))
\leq \lim_{\varepsilon \downarrow 0}P(W |_{K^+(-\varepsilon)}\in K^-(g+ \delta))
= P(W^+ \in K^-(g+\delta))\quad (\delta>0).
\label{ineq2_prob_liminf_meander}
\end{align}
Then, it follows from 
\eqref{ineq1_prob_liminf_meander}, \eqref{ineq2_prob_liminf_meander} and $\bigcap_{\delta>0}K^-(g+ \delta )=K^-(g)$ that 
\eqref{Prop_Boundary_purturb_ProbConv_for_Meander} holds. 
Similarly, combining Lemma~\ref{Lem_Boundary_Prob_for_Meander_BES3bridge} and 
Theorem~\ref{Th_conv_Brownianbridge_to_3dBesselbridge}, 
we can also deduce that \eqref{Prop_Boundary_purturb_ProbConv_for_BES3bridge} holds. 

Because $W |_{K^+(-\varepsilon)}$ converges weakly to $W^+$ (\cite{bib_Durret_1977}), 
Lemmas~\ref{Ap_Lem_conv_restricted_Expectation} and \ref{Lem_Boundary_Prob_for_Meander_BES3bridge}
imply that 
\begin{align}
E[F(W |_{K(- \varepsilon, g)})]
&=\frac{E[F(W |_{K^+(-\varepsilon)}) \ ;\ W |_{K^+(-\varepsilon)} \in K^-(g)]}{P(W |_{K^+(-\varepsilon)} \in K^-(g))} 
\nonumber \\
&\to \frac{E[F(W^+) \ ;\ W^+\in K^-(g)]}{P(W^+ \in K^-(g))}=
E[F(W^+ |_{K^-(g)})]\qquad (\varepsilon \downarrow 0). 
\label{EQ_Pre_Result}
\end{align}
On the other hand, because we have
\begin{align*}
\varDelta(\varepsilon)
\coloneqq &
\vert E[F(W |_{K(- \varepsilon, g+ \eta(\varepsilon))})] - E[F(W |_{K(- \varepsilon, g)})] \vert \\
\leq &\left\vert 
\frac{E[F(W |_{K^+(-\varepsilon)}) \ ;\ W |_{K^+(-\varepsilon)} \in K^-(g+ \eta(\varepsilon))\setminus K^-(g)]}
{P(W |_{K^+(-\varepsilon)} \in K^-(g+ \eta(\varepsilon)))} 
\right\vert \\ 
&+ \left\vert \dfrac{E[F(W |_{K^+(-\varepsilon)}) \ ;\ W |_{K^+(-\varepsilon)} \in K^-(g)]}{P(W |_{K^+(-\varepsilon)} \in K^-(g))}
\left( \frac{P(W |_{K^+(-\varepsilon)} \in K^-(g))}{P(W |_{K^+(-\varepsilon)} \in K^-(g+ \eta(\varepsilon)))} - 1 \right) \right\vert \\
\leq & 2\|F\|_{\infty}
\frac{P(W |_{K^+(-\varepsilon)}\in K^-(g+ \eta(\varepsilon)))-P(W |_{K^+(-\varepsilon)}\in K^-(g))}{P(W |_{K^+(-\varepsilon)} \in K^-(g))} 
\qquad (\varepsilon>0)
\end{align*}
for $\| F \|_{\infty} := \sup_{w\in C([0,1],\mathbb{R})} \vert F(w)\vert$, 
we can deduce that
\begin{align}
\varlimsup_{\varepsilon \downarrow 0}
\left\vert \varDelta(\varepsilon) \right\vert 
&\leq 2\|F\|_{\infty}
\frac{P(W^+\in K^-(g))-P(W^+ \in K^-(g))}{P(W^+ \in K^-(g))} =0
\label{varlimsupvarDelta_eq_zero}
\end{align}
by \eqref{Prop_Boundary_purturb_ProbConv_for_Meander} and Corollary~\ref{Cor_Prob_BES3bridge_under_single_curve}. 
Therefore, \eqref{EQ_Pre_Result} and \eqref{varlimsupvarDelta_eq_zero} imply 
\eqref{Prop_Boundary_purturb_WeakConv_for_Meander}. 
Similarly, combining 
Lemmas~\ref{Ap_Lem_conv_restricted_Expectation}, \ref{Lem_Boundary_Prob_for_Meander_BES3bridge}
and Theorem~\ref{Th_conv_Brownianbridge_to_3dBesselbridge}, 
we can deduce that \eqref{Prop_Boundary_purturb_WeakConv_for_BES3bridge} holds.
\end{proof}

\subsection{Preparation for the proof of Theorem~\ref{Thm_abs_conti}}
\label{subsection_proof_Preparation_for_absconti}

\begin{lem}\label{Lem_abs_conti_B_motion_and_B_bridge}
Let $a, c \in \mathbb{R}$. For $t\in(0, 1)$, we have
\begin{align*}
\frac{d\left(P\circ (\pi_{[0,t]}\circ B^{a\to c})^{-1}\right)}
{d\left(P\circ (\pi_{[0,t]}\circ (a+W))^{-1}\right)}(w)
=\frac{n_{1-t}(w(t)-c)}{n_1(a-c)}, \quad w\in C([0, t], \mathbb{R}).
\end{align*}
\end{lem}
\begin{proof}
In this proof, let $P^X$ denote 
the measure induced by a continuous process $X=\{X(t)\}_{t\in[0, 1]}$. 
In addition, for a continuous process $X=\{X(t)\}_{t\in[0, 1]}$, 
we write the expectation with respect to the probability $P^X$ as $E^X$.
Let $A\in \mathcal{B}(C([0, t],\mathbb{R}))$ be fixed. 
By the Markov property of $a+W$, we obtain the assertion as follows: 
\begin{align*}
P\left(\pi_{[0,t]}\circ B^{a\to c} \in A\right)
&=\frac{P^{a+W}\left(\pi_{[0, t]}^{-1}(A), w(1)\in dc\right)}{P^{a+W}\left(w(1)\in dc\right)}\\
&=\frac{E^{a+W}\left[1_{\pi_{[0, t]}^{-1}(A)}(w)\cdot P^{a+W}\left(w(1)\in dc~|~w(t)\right)\right]}{P^{a+W}\left(w(1)\in dc\right)}\\
&=\int_{\pi_{[0, t]}^{-1}(A)}\frac{P^{a+W}\left(w(1)\in dc~|~w(t)\right)}{P^{a+W}\left(w(1)\in dc\right)}P^{a+W}\left(dw\right)\\
&=\int_{A}\frac{n_{1-t}(w(t)-c)}{n_1(a-c)}P\left( \pi_{[0,t]}\circ (a+W)\in dw\right).
\end{align*}
\end{proof}

\section*{Acknowledgments}

The authors would like to thank
Prof.\ Kumiko Hattori (Tokyo Metropolitan University), 
Prof.\ Ryozo Miura (Hitotsubashi University), 
Prof.\ Toshihiro Yamada (Hitotsubashi University),  
Prof.\ Masaaki Fukasawa (Osaka University), and 
Prof.\ Tomonori Nakatsu (Shibaura Institute of Technology) 
for their helpful comments and discussions on the subject matter. 
We also thank Editage (www.editage.com) for English language editing. 
This study was supported by a JSPS KAKENHI grant (JP22K01556).
Finally, we would like to express our sincere gratitude to the anonymous reviewers 
for their valuable comments and suggestions, which have significantly contributed to improving the quality of this manuscript.


\end{document}